\newtheorem{theorem}{Theorem}[section]
\newtheorem{lemma}[theorem]{Lemma}
\newtheorem{proposition}[theorem]{Proposition}
\newtheorem{corollary}[theorem]{Corollary}
\theoremstyle{definition}
\newtheorem{definition}[theorem]{Definition}
\newtheorem{example}[theorem]{Example}
\theoremstyle{remark}
\newtheorem{remark}[theorem]{Remark}
\numberwithin{equation}{section}
\newcommand{\Z}{\mathbb{Z}}
\newcommand{\R}{\mathbb{R}}
\newcommand{\RR}{\mathbb{R}}
\newcommand{\N}{\mathbb{N}}
\def\im{{\rm im}\,}
\def\rad{{\rm rad}\,}
\def\spn#1{{\rm span}\{#1\}}
\def\Mod#1{{\rm Mod}(#1)}
\def\Diff#1{{\rm Diff}(#1)}
\begin{document} 

\title{Polynomial invariants of pseudo-Anosov maps}
\author{Joan Birman}
\address{Department of Mathematics, Columbia University, 2990 Broadway, New York City, NY 10027}
\email{jb@math.columbia.edu}

\author{Peter Brinkmann}
\address{Google Inc.}
\email{peter.brinkmann@gmail.com}

\author{Keiko Kawamuro}
\address{Department of Mathematics, University of Iowa, 14 MacLean Hall, Iowa City, IA 52242}
\email{kawamuro@iowa.uiowa.edu}

\subjclass[2000]{Primary 57M25, 57M27; Secondary 57M50}
\keywords{pseudo-Anosov map; train track; transition matrix; dilatation.}

\date{\today}

\begin{abstract} 
We investigate the structure of the characteristic polynomial $\det (xI-T)$ of a transition matrix $T$ that is associated to a train track representative of a pseudo-Anosov map $[F]$ acting on a surface. As a result we obtain three new polynomial invariants of $[F]$, one of them being the product of the other two, and all three being divisors of $\det (xI-T)$.  The degrees of the new polynomials are invariants of $[F]$ and we give simple formulas for computing them by a counting argument from an invariant train-track. 
We give examples of genus $2$ pseudo-Anosov maps having the same dilatation, and use our invariants to distinguish them.   
\end{abstract}

\maketitle

\section{Introduction}\label{sec:introduction}

Let $S$ be an orientable 2-manifold of genus $g$, closed or with finitely many punctures,
where the genus and the number of punctures are chosen so that $S$ admits a
hyperbolic structure.  The {\em modular group} $\Mod{S}$ is the  group
$\pi_0(\Diff{S})$, where admissible homeomorphisms preserve orientation.  
If a mapping class $[F] \in \Mod{S}$ is {\em pseudo-Anosov} or {\it pA}, then there exists
a representative $F :S\to S$, a pair of invariant transverse measured
foliations $({\mathcal F}^u, \mu_u), ({\mathcal F}^s, \mu_s),$  and a real number $\lambda$, the {\em dilatation} of $[F]$, such that $F$ multiplies the transverse measure $\mu_u$ (resp. $\mu_s$) by $\lambda$ (resp.  $\frac1\lambda$).  The real number $\lambda(F)$ is an invariant of the conjugacy class of $[F]$ in $\Mod{S}$.

In this paper we introduce a new approach to the study of invariants of $[F]$, when $[F]$ is pA.  Our work was in part motivated by recent efforts (see \cite{McM} and a host of papers that were inspired by it)  to understand precisely which real numbers $\lambda$ occur in this setting.   It is known that if one fixes $S$, then as $F$ is varied its dilatation $\lambda(F)$ takes on a minimum value  $\lambda_{S,min}$, where by this we mean that any pA map on $S$ has dilatation $\geq \lambda_{S,min}$, also $\lambda_{min,S}$ is realized by some pA map $F$. The number $\lambda_{S,min}$ is of great interest.  Many attempts have been made to use the approach in \cite{McM} to find $\lambda_{S,min}$ for closed surfaces of arbitrary genus $g$, however it appeared to us (after attending a workshop in April 2010 addressed to the study of $\lambda_{S,min}$), that  a new approach was needed. The main result in this paper is, for an arbitrary pA map $F$,  the determination of 
two integer polynomials, both containing $\lambda(F)$ as their largest real root, and the proof that both are invariants of the given pA mapping class.  Both will be seen to have a known topological meaning.   The study of these two polynomials, rather than of $\lambda$ itself, is the new approach that we have in mind.

{\em Measured train tracks} are a partially combinatorial device that Thurston introduced to encode the essential properties of $({\mathcal F}^u, \mu_u), ({\mathcal F}^s, \mu_s)$.   A train track $\tau$ is a branched 1-manifold that is embedded in the surface $S$.  It is made up of vertices (called {\em switches}) and smooth edges (called {\em branches}), disjointly embedded
in $S$.  See \S1.3 of \cite{PH}.
Given a pA map $[F]$, there exists a train track $\tau\subset S$ that {\em fills} the surface, i.e., the complement of $\tau$ consists of (possibly punctured) discs, and $\tau$ is left invariant by $[F]$.
Moreover, $\tau$ is equipped with a transverse measure (resp. tangential measure) that is related to the transverse measure $\mu_u$ on $\mathcal F_u$ (resp. $\mu_s$ on $\mathcal F_s$).

In \cite{BH} Bestvina and Handel gave an algorithmic proof of Thurston's
classification theorem for mapping classes.  
Their proof shows that, if $[F]$ is a pA map of $S$, then one may construct, algorithmically,  a graph $G$, homotopic to $S$ when $S$ is punctured, and an induced map $f:G \to G,$ that we call a {\it train track map}.  For every $r \geq 1$ the restriction of $f^r$ to the interior of every edge is an immersion. 
It takes a vertex of $G$ to a vertex, and takes an edge to an edge-path which has no backtracking.   
Let $e_1, \dots, e_n$ be the unoriented edges of $G$. Knowing $G$ and $f:G\to G$, they construct  a somewhat special measured train track $\tau$, and we will always assume that our $\tau$ comes from their construction.
The {\em transition matrix} $T$ is an $n \times n$ matrix whose entry $T_{i,j}$ is the number of times the edge path $f(e_j)$ passes over $e_i$ in either direction, so that all entries of $T$ are non-negative integers.
If $[F]$ is pA, then $T$ is irreducible and it has a dominant real eigenvalue $\lambda$, the {\em Perron-Frobenius eigenvalue} \cite{gantmacher2}. 
The eigenvalue $\lambda$ is the dilatation of $[F]$.  
The left (resp.\ right) eigenvectors of $T$ determine  tangential (resp.\  transversal) measures on $\tau$, and eventually determine $\mu_s$ (resp. $\mu_u$).

In this paper we study the structure of the characteristic polynomial $\det(xI-T)$ of the transition matrix $T$.
Our work depends crucially on the Bestvina-Handel algorithm, however we look for the structure needed to find a measured train track in $T$, and not in the matrix
$T'=
\scriptsize
\begin{bmatrix}
N & A\\
0 & T
\end{bmatrix}
$
that appears in  \S 3.3 and \S 3.4 of \cite{BH}.  Bestvina and Handel use $T'$ to build the invariant foliations associated to $f$. As is well-known, all of the information needed for that construction is already present in $T$, and we shall build on that fact. With that in mind, let $V(G)$ be the vector space of real weights on the edges of the Bestvina-Handel graph $G$.  
Let $f_*:V(G) \to V(G)$ be the map induced from the train track map $f:G \to G$.  Let $\chi(f_*)=\det (xI-T)$.
It is well-known that  $\chi(f_*)$ depends on the choice of $f:G\to G$ within its conjugacy class $[F]$. 

The first new result in this paper is the discovery that, after dividing $\chi(f_\ast)$ by a polynomial that is determined by the way that a train-track map acts on certain vertices of $G$, one obtains a quotient polynomial which is a topological invariant of $[F]$.  This polynomial arises via an $f_*$-invariant direct sum decomposition of the $\mathbb R$-vector space of transverse measures on $\tau$.  It is the characteristic polynomial of the action of $f_*$ on one of the summands.  We call it the {\it homology polynomial} of $[F]$ for reasons that will become clear in a moment.   We will  construct examples of pA maps on a surface of genus 2 which have the same dilatation, but are distinguished by their homology polynomials.   

Like $\chi(f_\ast)$, our homology polynomial  is the characteristic polynomial of an integer matrix, although (unlike $T$) that matrix is not in general non-negative.  We now describe how we found it.  
We define and study an $f_*$-invariant subspace $W(G,f) \subset V(G)$.   The subspace $W(G,f)$ is chosen so that weights on edges determine a transverse measure on the train track $\tau$ associated to $G$ and $f:G\to G$.  We study $f_* |_{W(G,f)}$.  See page 427 of \cite{T}, where the mathematics that underlies $f_\ast |_{W(G,f)}$ is described by Thurston.  Our first contribution in this paper is to make the structure that Thurston described there concrete and computable, via an enhanced form of the Bestvina-Handel algorithm. 
This allows us to prove that the characteristic polynomial $\chi(f_\ast |_{W(G,f)})$ is an invariant of the mapping class $[F]$ in $\Mod{S}$.  This polynomial $\chi(f_\ast |_{W(G,f)})$ is our homology polynomial  and we denote it by $h(x)$. It contains the dilatation of $[F]$ as its largest real root, and so is divisible by the minimum polynomial of $\lambda$.   
Its degree depends upon a careful analysis of the action of $f_\ast$ on the vertices of $G$.

Investigating the action of $f_\ast$ on $W(G,f)$, we show that $W(G,f)$ supports a skew-symmetric form that is $f_\ast$-invariant.  The existence of the symplectic structure was known to Thurston and also was studied by Penner-Harer in \cite{PH}, however it is unclear to us whether it was known to earlier workers that it could have degeneracies.  
See Remark~\ref{rem:completeness}. 
We discovered via examples that degeneracies do occur.  
In $\S$\ref{sec:2nd polynomial invariant} 
we investigate the radical $Z$ of the skew-symmetric form, i.e. the totally degenerate subspace of the skew-symmetric form, and arrive at an $f_\ast$-invariant decomposition of $W(G,f)$ as $Z \oplus (W(G,f)/Z)$.  This decomposition leads to a product decomposition of the homology polynomial as a product of two additional new polynomials,  with both factors being invariants of $[F]$.   We call the first of these new polynomials,
$p(x)=\chi(f_\ast |_Z)$, the {\it puncture} polynomial because it is a cyclotomic polynomial that relates to the way in which the pA map $F$ permutes certain punctures in $S$.   As for $s(x)=\chi(f_\ast |_{W(G,f)/Z})$, our {\it symplectic} polynomial,  we know that it arises from the action of $f_\ast$ on the 
symplectic space $W(G,f)/Z$, but we do not fully understand it at this writing.  Sometimes the symplectic polynomial is irreducible, in which case it is the minimum polynomial of $\lambda$.  However we will give examples to show that it can be reducible, and even an example where it is 
symplectically reducible. Thus its relationship to the minimum polynomial of $\lambda$ is not completely clear at this writing.

Summarizing, we will prove:
\begin{theorem}\label{thm:summarize} 
Let $[F]$ be a pA mapping class in $\Mod{S}$, with Bestvina-Handel  train track map $f:G\to G$ and transition matrix $T$. 
\begin{enumerate}
\item
The characteristic polynomial $\chi(f_\ast)$ of $T$ has a divisor, the homology polynomial $h(x)$ 
which is an invariant of $[F]$.  It contains $\lambda$ as its largest real root, and is associated to an induced action of $F_*$ on $H_1(X, \mathbb R)$, where $X$ is the surface $S$ when $\tau$ is orientable and its orientation cover $\tilde S$ when $\tau$ is non-orientable.
\item
The homology polynomial $h(x)$ decomposes as a product $p(x) \cdot s(x)$ of two polynomials, each a topological invariant of $[F]$.  
\begin{enumerate}
\item
The first factor, the puncture polynomial $p(x)$, records the action of $f_*$ on the radical of a skew-symmetric form on $W(G,f)$.  It has topological meaning related to the way in which $F$ permutes certain punctures in the surface $S$. 
It is a palindromic or anti-palindromic polynomial, and all of its roots are on the unit circle. 
\item
The second factor, the symplectic polynomial $s(x)$, records the action of $f_\ast$ on the non-degenerate symplectic space $W(G,f)/Z$.  It contains $\lambda$ as its largest real root.  It is palindromic.  If irreducible, it is the minimum polynomial of $\lambda$, but it is not  always irreducible.
\end{enumerate}
\item
The homology polynomial $h(x)$, being a product of the puncture and symplectic polynomials, is palindromic or anti-palindromic.
\end{enumerate}
\end{theorem}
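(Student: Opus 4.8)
The plan is to prove parts (1) and (2) in the body of the paper and to obtain part (3) as an immediate formal consequence. First I would give a concrete description of the subspace $W(G,f)\subset V(G)$: a weight vector is required to satisfy the linear ``switch'' relations forced by the infinitesimal structure of the train track $\tau$ at each vertex of $G$ (the structure Thurston describes on p.~427 of \cite{T}), so that $W(G,f)$ is exactly the space of weight vectors that define a transverse measure on $\tau$. Since $f$ is a train track map --- immersive on edge interiors and with no backtracking --- the transition matrix $T$ (which represents $f_\ast$) carries switch-admissible weights to switch-admissible weights, so $W(G,f)$ is $f_\ast$-invariant, and I take $\chi(f_\ast|_{W(G,f)})$ as the definition of the homology polynomial. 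The complementary factor $\chi(f_\ast)/\chi(f_\ast|_{W(G,f)})$ is then the characteristic polynomial of the induced action of $f_\ast$ on the quotient $V(G)/W(G,f)$, a purely combinatorial action on the vertex/gate data of $G$; this factor, unlike the quotient polynomial, is sensitive to the choice of $f:G\to G$.

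The main obstacle is showing that $\chi(f_\ast|_{W(G,f)})$ is an invariant of $[F]$, independent of the chosen Bestvina--Handel representative. The strategy is to identify the pair $(W(G,f),f_\ast)$ with $(H_1(X;\mathbb R),F_\ast)$, where $X=S$ when $\tau$ is orientable and $X=\tilde S$, the orientation double cover determined by the non-orientable $\tau$, otherwise. Concretely: when $\tau$ is orientable a transverse measure on $\tau$ is the same datum as a real $1$-cycle carried near $\tau$, and collapsing the complementary (possibly punctured) discs identifies the space of such cycles with $H_1(S;\mathbb R)$ compatibly with $f_\ast$ and $F_\ast$; when $\tau$ is non-orientable one lifts to $\tilde S$, where the preimage track is orientable, and applies the previous case equivariantly. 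Since $F_\ast$ on $H_1(X;\mathbb R)$ depends only on the mapping class $[F]$ and the cover $\tilde S\to S$ is canonically attached to $[F]$, the characteristic polynomial is an invariant; this proves part (1), and shows $\lambda$ is a root (the Perron--Frobenius eigenvector of $T$, being a genuine transverse measure, lies in $W(G,f)$) and indeed the largest real root (it is the dominant eigenvalue of all of $T$). The delicate bookkeeping here --- tracking weights through the collapse of complementary regions and the resolution of switches while preserving both the module structure and, below, the intersection form --- is where the real work lies.

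For part (2), transport to $W(G,f)$ the algebraic intersection form on $H_1(X;\mathbb R)$; equivalently, use the skew form of Thurston and Penner--Harer on transverse measures (\cite{PH}). It is $f_\ast$-invariant because $F$ preserves orientation and intersection numbers, so its radical $Z$ is an $f_\ast$-invariant subspace and the short exact sequence $0\to Z\to W(G,f)\to W(G,f)/Z\to 0$ of $f_\ast$-modules gives the factorization $\chi(f_\ast|_{W(G,f)})=\chi(f_\ast|_Z)\cdot\chi(f_\ast|_{W(G,f)/Z})$; both $Z$ and $W(G,f)/Z$ are intrinsic to $(W(G,f),f_\ast,\text{form})$, hence both factors are invariants (an $f_\ast$-equivariant splitting, giving the literal $\oplus$, follows because $f_\ast|_Z$ is quasi-unipotent while $f_\ast|_{W(G,f)/Z}$ has the eigenvalue $\lambda>1$, so the two share no eigenvalue; only the exact sequence is needed for part (3)). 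To finish: (a) identify $Z$ with the span of the boundary classes of the punctures $F$ moves among, on which $f_\ast$ acts by a signed permutation induced from the permutation $F$ induces on those punctures, so $\chi(f_\ast|_Z)$ is a product of cyclotomic polynomials, hence palindromic or anti-palindromic with all roots on the unit circle; (b) on $W(G,f)/Z$ the induced skew form is non-degenerate and $f_\ast$-invariant, so $f_\ast|_{W(G,f)/Z}$ is a symplectic transformation and $\chi(f_\ast|_{W(G,f)/Z})$ is therefore reciprocal, i.e. palindromic, with $\lambda$ as its largest real root (the $\lambda$-eigenvector lies in $W(G,f)$ but pairs non-trivially with the stable measure, so is not in $Z$).

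Part (3) is then immediate: the product of a palindromic-or-anti-palindromic polynomial with a palindromic polynomial is again palindromic or anti-palindromic, with the sign inherited from $\chi(f_\ast|_Z)$, and $\chi(f_\ast|_{W(G,f)})$ is exactly that product. As indicated, the step I expect to be hardest is the homological identification in the second paragraph --- pinning down $(W(G,f),f_\ast)$ and its skew form in a way robust enough to be manifestly representative-independent in both the orientable and non-orientable cases; once that is in place, the remaining assertions are linear algebra of symplectic and signed-permutation matrices.
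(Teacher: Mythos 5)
Your architecture coincides with the paper's --- switch conditions define $W(G,f)$, the train track map preserves it, a homological identification gives invariance, and the Penner--Harer skew form splits off the radical --- but the central identification in the non-orientable case, as you have stated it, is false, and it is precisely the step you yourself flag as the hard one. You propose identifying $(W(G,f),f_\ast)$ with $(H_1(\tilde S;\mathbb R),\tilde F_\ast)$. This is dimensionally impossible: by Lemma~\ref{lem:dimW(G,f)}, $\dim W(G,f) = \#(\text{edges}) - \#(\text{non-odd vertices})$, while an Euler characteristic count for the double branched cover gives $\dim H_1(\tilde S;\mathbb R) = \dim H_1(S;\mathbb R) + \dim W(G,f)$, which is strictly larger. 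The correct statement, which is the paper's Theorem~\ref{thm:involution}, is that $W(G,f) \cong E^-$, the $(-1)$-eigenspace of the deck involution $\iota_\ast$ on $H_1(\tilde S;\mathbb R)$, while $E^+ \cong H_1(S;\mathbb R)$. ``Applying the orientable case equivariantly'' does put you in $W(\tilde G,\tilde f)\cong H_1(\tilde S;\mathbb R)$, but you must then isolate the anti-invariant summand: the map $\zeta_e \mapsto \zeta_{\tilde e}+\zeta_{\tilde e'}$ lands in $E^-$ because $\iota$ reverses the chosen orientation of $\tilde\tau$. Once $W(G,f)\cong E^-$ is in hand, invariance does follow, since the cover, $\iota$, and hence $E^-$ are canonically attached to $[F]$; without isolating $E^-$ you compute a polynomial of the wrong degree. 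The same correction is needed for your description of $Z$: in the non-orientable case, $Z$ is spanned by the anti-invariant differences $\alpha_i-\beta_i$ of the two lifts of those punctures that lift to two punctures of $\tilde S$ (Proposition~\ref{prop:dim Z}), not by the puncture classes of $S$ themselves.

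Two smaller points. Your claimed $f_\ast$-equivariant splitting $W(G,f)\cong Z\oplus W(G,f)/Z$ via disjoint spectra is unsound: the symplectic factor can carry roots of unity --- e.g.\ $(x-1)^2(x^2-38x+1)$ in Example~\ref{ex:filling-curves} --- so nothing forces the spectra of $f_\ast|_Z$ and $f_\ast|_{W(G,f)/Z}$ apart; as you note, only the short exact sequence is needed for the factorization, so this does no harm. And to see that $\lambda$ survives into the symplectic factor, the clean argument is that $f_\ast|_Z$ has finite order, so all its eigenvalues lie on the unit circle, while $\lambda>1$; your appeal to ``pairing non-trivially with the stable measure'' conflates a tangential measure, which is a covector, with an element of $W(G,f)$.
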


The proof of Theorem~\ref{thm:summarize} can be found in $\S$\ref{sec:1st polynomial invariant} and $\S$\ref{sec:2nd polynomial invariant}  below.  In $\S$\ref{sec:applications} we give several applications, and prove that our three invariants behave nicely when the defining map $F$ is replaced by a power $F^k$.  The paper ends, in $\S$\ref{sec:examples} with a set of examples which give concrete meaning to our ideas.  The first such example, Example~\ref{ex:filling-curves}, defines three distinct maps $F_1,F_2,F_3$ on a surface of genus 2,  chosen so that all three have the same dilatation.  Two of the three pairs are distinguished by any one of our three invariants.  The third map was chosen so that it probably is not conjugate to the other two, however our invariants could not prove that.


\section{Proof of Part (1) of Theorem~\ref{thm:summarize}} \label{sec:1st polynomial invariant}

We begin our work in $\S$\ref{subsec:preliminaries} by recalling some well-known facts from \cite{BH} that relate to the construction of the train track $\tau$ by adding infinitesimal edges to the graph $G$. After that, in $\S$\ref{subsec:W(G,f)}, we introduce the space $W(G, f)$ of transverse measures, which plays a fundamental role throughout this paper.  Rather easily, we will be able to prove our first decomposition and factorization theorem. Thus at the end of $\S$\ref{subsec:W(G,f)} we have our homology polynomial in hand, but we do not know its meaning, have not proved it is an invariant, and don't know how to compute it.  In $\S$\ref{subsec:basis for W(G,f)}  we prepare for the work ahead by constructing a basis for $W(G,f)$.
 We also learn how to find the matrix for the action of $f_\ast$ on the basis.  With that in hand, in $\S$\ref{subsec:homology poly}  we identify the vector space $W(G,f)$ with a homology space. We will be able to prove Corollary~\ref{cor:homology polynomial is invariant}, which asserts that the homology polynomial $h(x)$ is a topological invariant of the conjugacy class of our pA map $[F] $  in ${\rm Mod}(S)$.  
(Later in $\S$\ref{sec:examples}, we will use it to distinguish examples of pA maps acting on the same surface and having the same dilatation.)


\subsection{Preliminaries}\label{subsec:preliminaries}
It will be assumed that the reader is familiar with the basic ideas of the algorithm of Bestvina-Handel \cite{BH}.  The mapping class $[F]$ will always be pA. Further, assume that we are given the graph $G\subset S$,  homotopic to $S$, and a train track map  $f:G \to G$.  We note that if $S$ is closed, the action of $[F]$ always has periodic points with finite order, and the  removal of a periodic orbit will not affect our results, therefore without loss of generality we may assume that $S$ is finitely punctured.   

Following ideas in \cite{BH} we construct a train track $\tau$ from $f:G\to G$ by equipping the vertices of $G$ with additional structure: 
Let $e_1, e_2 \subset G$ be two (non-oriented) edges originating at the same vertex $v$.  
Edges $e_1$ and $e_2$ belong to the same {\em gate} at $v$ if for some $r>0$, the edge-paths $f^r(e_1)$ and $f^r(e_2)$ have a nontrivial common initial segment.  
If $e_1$ and $e_2$ belong to different gates at $v$ and there exists some exponent $r>0$ and an edge $e$ so that $f^r(e)$ contains 
$e_2e_1$ or $e_1e_2$
as a subpath, then we connect the gates associated to $e_1, e_2$ with an {\em infinitesimal edge.}  
In this way, a vertex $v \in G$ with $k$ gates becomes an infinitesimal $k$-gon in the train track $\tau$.
While this $k$-gon may be missing one-side, the infinitesimal edges must connect all the gates at each vertex, see \S3.3 of \cite{BH}.
In addition to the infinitesimal edges, $\tau$ also has {\em real edges} corresponding to the edges of $G$.
Hence, a {\em branch} of $\tau$, in the sense of Penner-Harer \cite{PH}, is either an infinitesimal edge or a real edge.

It is natural to single out the following properties of the vertices of $G$:

\begin{definition}[Vertex types]\label{def:vertex-type}  
See Figure~\ref{fig:vertex-types}.
A vertex of $G$ is {\em odd} (resp.\ {\em even}) if its corresponding infinitesimal complete polygon in $\tau$ has an odd (resp.\ {\em even}) number of sides, and it is {\em partial} if its infinitesimal edges form a polygon in $\tau$ with one side missing.   
Partial vertices include the special case where $v$ has only
two gates connected by one infinitesimal edge; we call such vertices {\em evanescent}.  
The symbol $w_i$ (resp. $x_i$) will denote the weight of $i$-th gate (resp. infinitesimal edge).
\end{definition}

\begin{figure}[htpb!]
\centering
\psfrag{w0}{$w_0$}
\psfrag{w1}{$w_1$}
\psfrag{w2}{$w_2$}
\psfrag{w3}{$w_3$}
\psfrag{w4}{$w_4$}
\psfrag{x0}{$x_0$}
\psfrag{x1}{$x_1$}
\psfrag{x2}{$x_2$}
\psfrag{x3}{$x_3$}
\psfrag{x4}{$x_4$}
\psfrag{A}{\small odd}
\psfrag{B}{\small even}
\psfrag{C}{\small partial}
\psfrag{D}{\small evanescent}
\subfloat{
\includegraphics[width=0.2\textwidth]{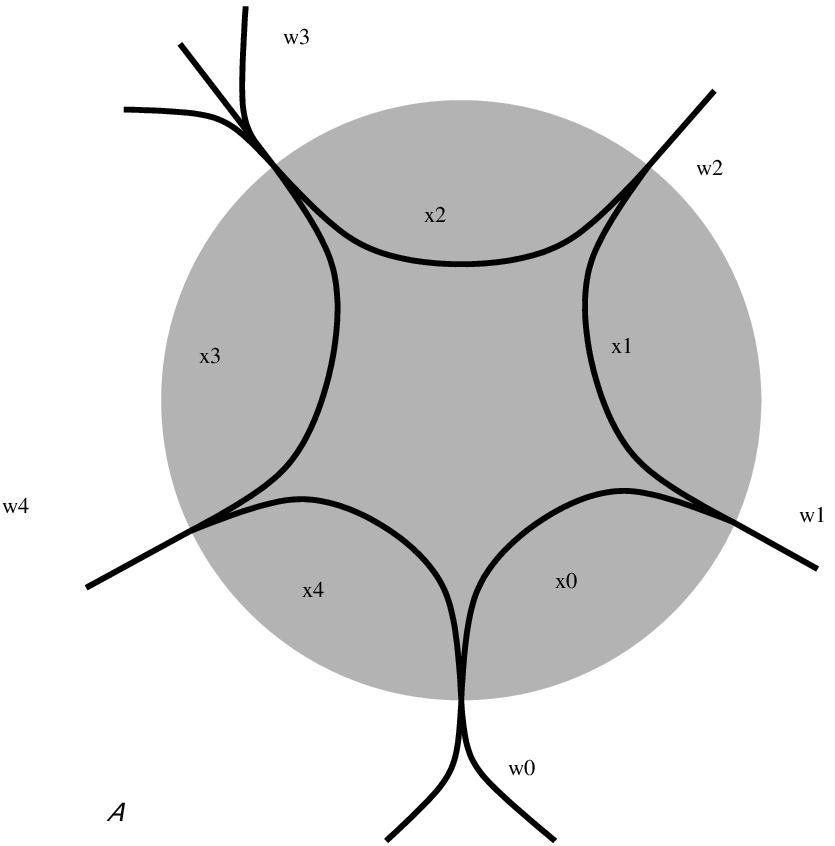}
\label{fig:oddvertex}}
\hfill
\subfloat{
\includegraphics[width=0.2\textwidth]{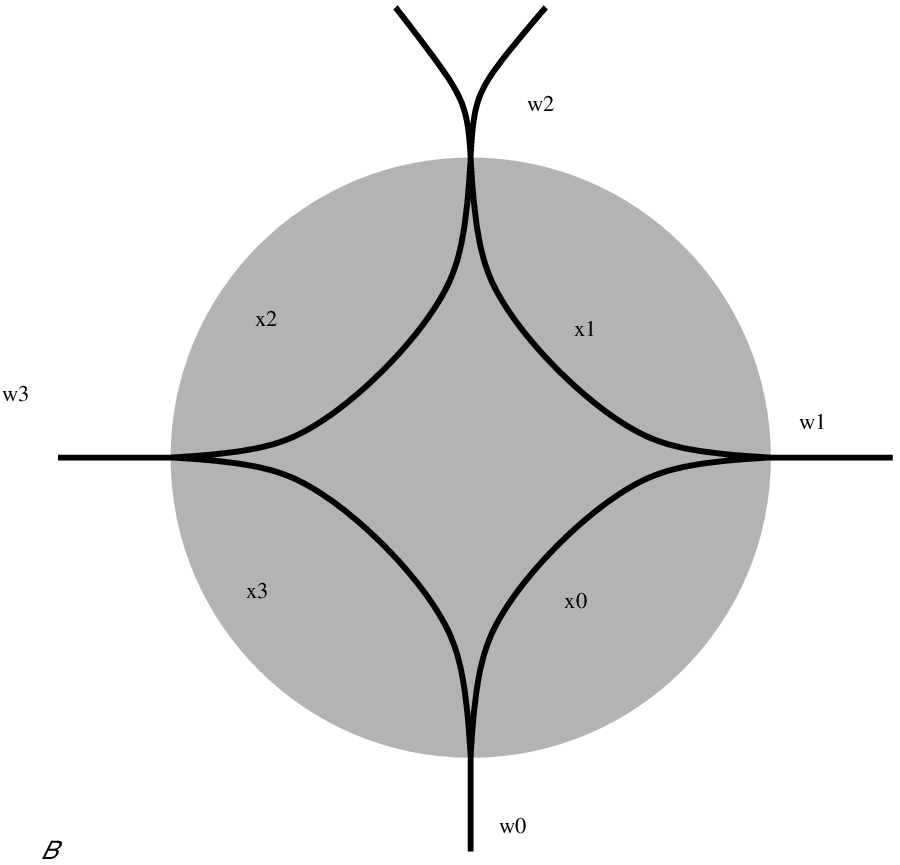}
\label{fig:evenvertex}}
\hfill
\subfloat{
\includegraphics[width=0.2\textwidth]{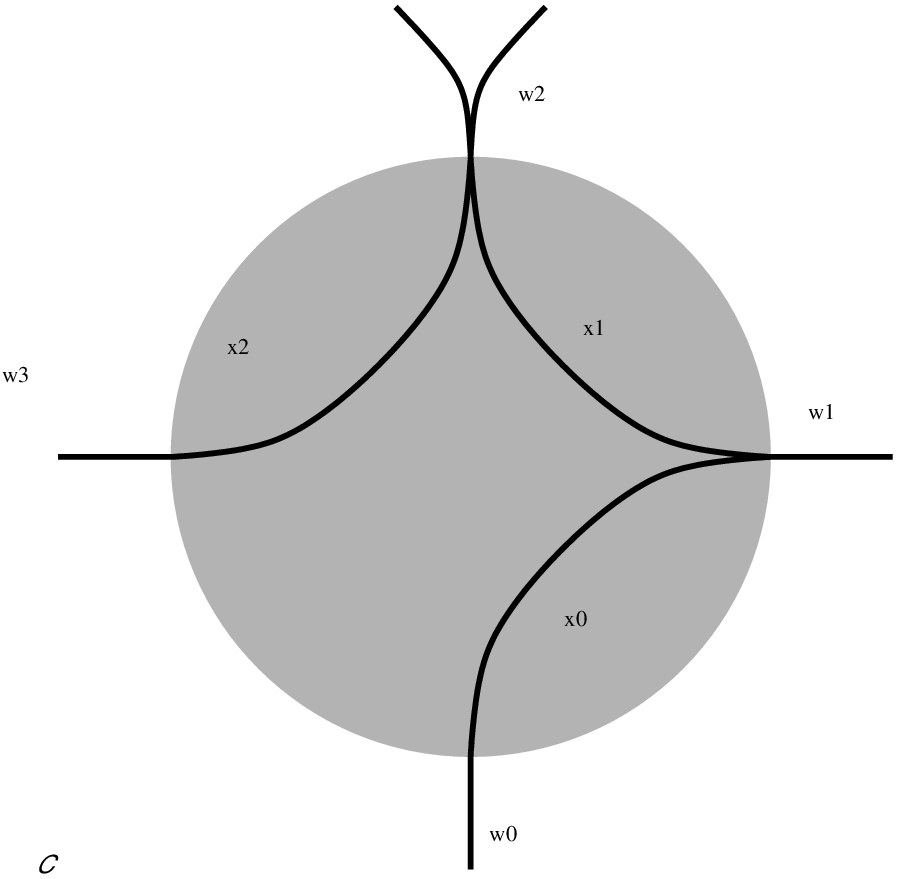}
\label{fig:partialvertex}}
\hfill
\subfloat{
\includegraphics[width=0.2\textwidth]{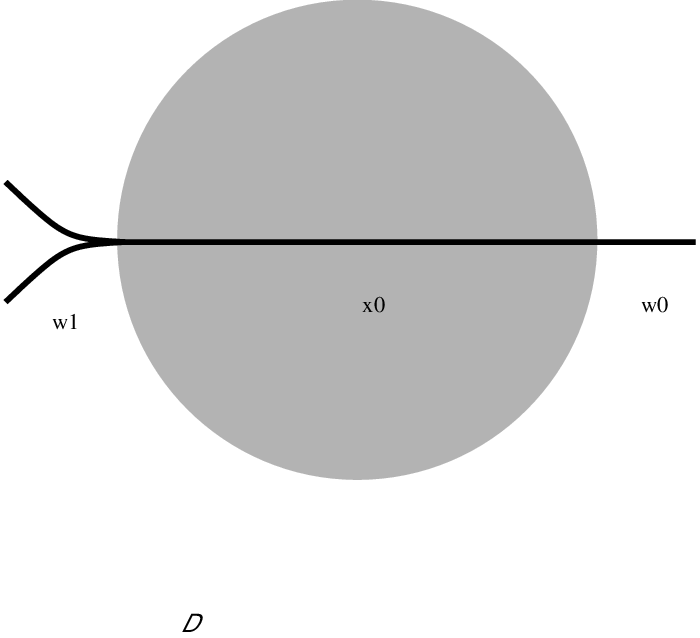}
\label{fig:evanescentvertex}}
\caption{Shaded disks enclose infinitesimal (partial) polygons in $\tau$ that correspond to the vertices of $G$.}\label{fig:vertex-types}
\end{figure}

\begin{remark}  
In $\S$\ref{sec:examples} the reader can find several examples illustrating the graph $G$ with infinitesimal polygons associated to particular pA mapping classes.  
In those illustrations the vertices of the graphs have been expanded to shaded discs which show the structure at the vertices.  
In the sketch of a train track that XTrain generates, the branches at each gate do not appear to be tangent to each other. 
This was done for ease in drawing the required figures.
The reader should keep in mind that all the branches at each gate are tangent to each other.
\end{remark}

We recall properties of non-evanescent vertices that are preserved under a train track map:

\begin{lemma}{\em(Proposition~3.3.3 of \cite{BH})}
\label{prop333}
For $k\geq 3$, let $O_k$ be the set of odd vertices with $k$ gates,
$E_k$ the set of even vertices with $k$ gates, and $P_k$ the set of
partial vertices with $k$ gates.  Then the restriction of $f:G\to G$ to each of these sets is a permutation of the set.

Moreover, for each $($non evanescent$)$ vertex $v$ with at least three gates, $f$ induces a bijection between the
gates at $v$ and the gates at $f(v)$ that preserves the cyclic order.
\end{lemma}

\begin{remark}
The number of evanescent vertices of a train track representative is {\em not} an invariant of the underlying mapping class.  Examples exist where a train track map has a representative with evanescent vertices, and another without. 
\end{remark}

\begin{definition}[Orientable and non-orientable train tracks]\label{def: tau orientable}
Choose an orientation on each branch of a train track $\tau$.   A train track is {\em orientable} if we can orient all the branches so that, at every switch, the angle between each incoming branch and each outgoing branch is $\pi$.  For example, see the train tracks $\tau_1$ and $\tau_2$ that are given in Figure~\ref{fig:filling-curves-tt} of $\S$\ref{sec:examples}.     After adding the infinitesimal edges, one sees that $\tau_1$ is orientable, but $\tau_2$ is not.  
\end{definition}
 
Here is an easy observation.

\begin{lemma}\label{vertextypelem} 
If $G$ has an odd vertex, then the corresponding train track $\tau$ is non-orientable.  This condition is sufficient, but not necessary.
\end{lemma}

\begin{proof}
If $v$ is an odd vertex, then there exists no consistent orientation for the corresponding infinitesimal polygon in $\tau$.  The example in Figure~\ref{fig:filling-curves-tt} shows that the condition is not necessary.
\end{proof}

\begin{remark}
We do not know any immediate visual criterion beyond the one in Lemma~\ref{vertextypelem}  for detecting non-orientability.  
The two train tracks $\tau_1,\tau_2$ in Figure~\ref{fig:filling-curves-tt} of this paper both have 2 vertices, one even and one partial, and $\tau_1$ is orientable whereas $\tau_2$ is non-orientable.   
If all the vertices are partial, then the train track may be either orientable (see Example~4.2 of \cite{pbexp}) or non-orientable (see  Example~\ref{ex:penner});  also, a non-orientable train track may have odd, even, and partial vertices at the same time (see Example~\ref{ex:evenodd}).
\end{remark}


\subsection{The space $W(G,f)$  and the first decomposition} \label{subsec:W(G,f)}

Given a graph $G$ of $n$ edges, one always has an $\R$-vector space $V(G) \simeq \R^n$ of weights on $G$.  
Our goal in this section is to define a subspace $W(G,f) \subset V(G)$ of `transverse measures on $G$'.  This space is the natural projection of the measured train track $\tau$ to a space of measures on $G$.    
It will play a fundamental role in our work.

In our setting, all train tracks are {\em bi-recurrent}, that is, recurrent and transversely recurrent, cf. p.20 of \cite{PH}. 
To define our space $W(G,f)$, we apply Penner-Harer's work described in \S3.2 of \cite{PH}, where bi-recurrence is assumed.

Let $V(\tau)\cong \R^{n+n'}$, where $n$ (resp. $n')$ is the number of the real (resp. infinitesimal) edges of train track $\tau$. 
Penner-Harer defined a subspace $W(\tau) \subset V(\tau)$ of assignments of (possibly negative) real numbers, one to each branch of $\tau$, which satisfy the {\em switch conditions.}   That is, if $\eta \in W(\tau)$ then at each switch of $\tau$, the sum of the weights on the incoming branches equals to the sum on the outgoing branches. 
For example, in Figure~\ref{fig:switch}-(A), $\eta(a) = \eta(b_1)+ \eta(b_2).$
\begin{figure}[htpb!]
\subfloat[]{
\psfrag{a}{$a$}
\psfrag{b}{$b_2$}
\psfrag{c}{$b_1$}
\includegraphics[width=0.2\textwidth]{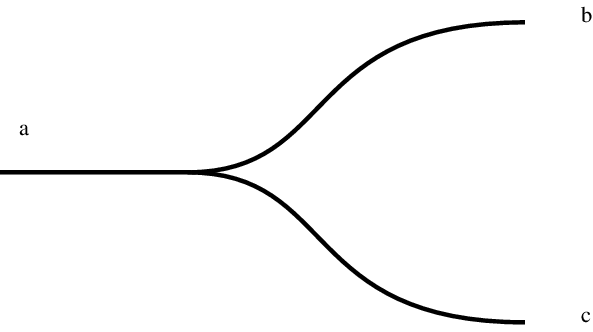}
}
\hspace{3cm}
\subfloat[]{
\psfrag{a}{$b_1$}
\psfrag{b}{$b_2$}
\psfrag{c}{$b_3$}
\psfrag{d}{$b_4$}
\psfrag{e}{$a$}
\includegraphics[height=2cm]{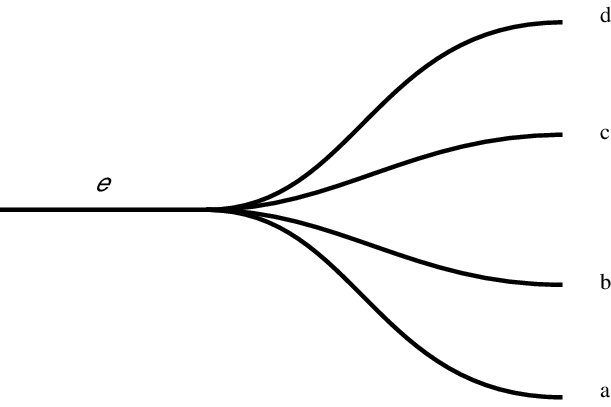}
\label{fig:valences}}
\caption{(A) A switch of valence $3$. (B) A switch of valence $5$.}\label{fig:switch}
\end{figure}

\begin{definition}
There is a natural surjection $\pi:\tau\to G$ which is defined by collapsing all the infinitesimal (partial) polygons to their associated vertices in $G$ and taking each real edge in $\tau$ to the corresponding edge in $G$.   
Let  $W(G, f) = \pi_\ast(W(\tau))$.   That is, $W(G, f)\subset V(G)$ is the subspace whose elements admit an extension to a (possibly negative) transverse measure on $\tau$.    
The name $W(G,f)$ has been chosen to reflect the fact that our subspace depends not only on $G$, but also on the action $f:G\to G$.  
\end{definition}

Here is a useful criterion for an element of $V(G)$ to be in $W(G,f)$:

\begin{lemma}\label{switchlem}
An element $\eta\in V(G)$ belongs to $W(G, f)$ if and only if for each non-odd vertex the alternating sum of the weights at the incident gates is zero.
\end{lemma}

\begin{proof}
Assume that $\eta\in V(G)$ belongs to $W(G, f)$.
Let $v \in G$ be a vertex with $k$ gates, i.e., $v$ corresponds to an infinitesimal $k$-gon, possibly partial, in the train track $\tau$. 
Let $w_0, \ldots, w_{k-1} \in \R$ be the weights of $\eta$  at the incident gates of $v$, and let $x_0, \ldots, x_{k-1} \in \R$ (or $x_0, \ldots, x_{k-2}$ if $v$ is a partial vertex) be the weights of the infinitesimal edges.  
See Figure~\ref{fig:vertex-types}.
The weights on the infinitesimal edges may turn out to be negative real numbers. 
We determine when an assignment of weights to the real edges admits an extension to the infinitesimal edges that satisfies the switch conditions.

If $v$ is odd or even with $k$ gates, then the switch condition imposes:
\[
\begin{array}{ccccc}
x_{k-1} & + & x_0 & = & w_0,\\
x_0 & + & x_1 & = & w_1,\\
x_1 & + & x_2 & = & w_2,\\
&& \vdots &&\\
x_{k-2} & + & x_{k-1} & = & w_{k-1}.\\
\end{array}
\]
If $k$ is odd, this system of equations has a unique solution, regardless of the weights $w_i$.  
If $k$ is even, the system is consistent if and only if $\sum_{i=0}^{k-1}(-1)^i w_i = 0$.

If $v$ is partial with $k$ gates, then the switch condition imposes:
\[
\begin{array}{ccccc}
& & x_0 & = & w_0,\\
x_0 & + & x_1 & = & w_1,\\
x_1 & + & x_2 & = & w_2,\\
&& \vdots &&\\
x_{k-3} & + & x_{k-2} & = & w_{k-2},\\
x_{k-2} & & & = & w_{k-1}.\\
\end{array}
\]
This system has a unique solution if and only if $\sum_{i=0}^{k-1} (-1)^i w_i = 0$.
\end{proof}

\begin{lemma}\label{eq:inclusion}
$W(G,f)$ is an invariant subspace of $V(G)$ under $f_\ast$, i.e.,
$f_\ast (W(G,f)) \subseteq W(G,f).$
\end{lemma}

\begin{proof}
Suppose $v$ is a non-odd vertex and mapped to a non-odd vertex $f(v)$. 
Let $\eta\in W(G,f)$. 
By Lemma~\ref{switchlem}, the alternating weight sum of $\eta$ at the incident gates of $v$ is $0$. 
Lemma~\ref{prop333} implies that all the weights of $\eta$ at the infinitesimal edges for $v$ is inherited to the weights of $f_\ast \eta$ at the infinitesimal edges for $f(v)$.

In addition, we account for an edge $e \subset G$ whose image $f(e)=e_0e_1\cdots e_k$ passes through the vertex $f(v)$. 
Assume that $\eta$ has weight $w=\eta(e)$ at the edge $e$.
If a sub-edge-path $e_ie_{i+1}$ passes through $f(v)$ then edges $e_i$ and $e_{i+1}$ belong to adjacent gates at $f(v)$ and the contribution of $e_i e_{i+1}$ to the alternating sum of weights of the gates at $f(v)$ is $w-w=0$.

Therefore, the alternating weight sum for $f_\ast \eta$ at the incident gates for $f(v)$ is $0$. 
By Lemma~\ref{switchlem}, $f_\ast \eta \in W(G,f)$.
\end{proof}

The dimension of the vector space $W(G,f)$ can be computed combinatorially by inspecting a train track associated to the pair $(G,f)$:
\begin{lemma}\label{lem:dimW(G,f)}
$(1)$ 
If $\tau$ is orientable, then 
\begin{eqnarray*}
\dim W(G,f)&=& \#( \mbox{edges of }G) - \#(\mbox{vertices of }G) +1, \\
W(G,f) &\cong& Z_1(G; \R) \cong H_1(G; \R) \cong H_1(S; \R).
\end{eqnarray*}
In particular, the switch conditions are precisely the cycle conditions.

$(2)$
If $\tau$ is non-orientable, then 
$$\dim W(G,f)= \#( \mbox{edges of }G) - \#(\mbox{non-odd vertices of }G).$$
\end{lemma}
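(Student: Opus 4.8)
The plan is to compute $\dim W(G,f)$ directly from the characterization in Lemma~\ref{switchlem}: $W(G,f)$ is the kernel of the linear map $V(G)\to \R^{?}$ that records, for each non-odd vertex $v$ of $G$, the alternating sum of the weights of $\eta$ at the gates incident to $v$. So $\dim W(G,f) = \#(\text{edges of } G) - \operatorname{rank}(B)$, where $B$ is the matrix of this map. The heart of the argument is therefore to determine $\operatorname{rank}(B)$, i.e.\ to decide which (if any) of the alternating-sum constraints are linearly dependent.

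For part (2), the claim is that when $\tau$ is non-orientable the constraints are all independent, so $\operatorname{rank}(B) = \#(\text{non-odd vertices})$. First I would observe that each non-odd vertex contributes exactly one equation, and that the gate-weights at distinct vertices are in general distinct coordinates of $V(G)$ only up to the identification of the two ends of each edge: an edge $e$ joining vertices $v$ and $v'$ contributes its weight $\eta(e)$ (with a sign) to the equation at $v$ and to the equation at $v'$. The dependencies among the rows of $B$ are governed by how these shared edge-weights can cancel. I would argue that a nontrivial linear relation $\sum_v c_v (\text{row}_v) = 0$ forces, for each real edge $e$ from $v$ to $v'$ sitting in gate-positions of parities $\epsilon, \epsilon'$, a relation $(-1)^\epsilon c_v + (-1)^{\epsilon'} c_{v'} = 0$; tracing this around loops in $G$ and using that $G$ is connected (or handling components separately) shows that the existence of a nontrivial relation is precisely equivalent to a consistent global choice of orientation on all real branches of $\tau$ — which, together with the orientability of the infinitesimal polygons at non-odd vertices, would make $\tau$ orientable. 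Since $\tau$ is assumed non-orientable, no such relation exists, $B$ has full rank $\#(\text{non-odd vertices})$, and the formula follows. (The presence of odd vertices is automatically ruled out here as well, by Lemma~\ref{vertextypelem}, but I would phrase the orientation-cocycle argument to cover both the "no odd vertices" and "odd vertices present" situations uniformly.)

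For part (1), when $\tau$ is orientable, I would run the same computation but now there \emph{is} the dependency just described: the global orientation of $\tau$ provides a sign vector $(\varepsilon_v)$ on the vertices (and a compatible sign on each edge) for which the weighted sum of the alternating-sum rows vanishes identically. Moreover an orientable $\tau$ has no odd vertices, so "non-odd vertices" is just "all vertices." I would check that there is exactly one such dependency up to scalar (again by the connectedness of $G$), so $\operatorname{rank}(B) = \#(\text{vertices}) - 1$ and hence $\dim W(G,f) = \#(\text{edges}) - \#(\text{vertices}) + 1 = \operatorname{rank} H_1(G;\R)$. For the identification with homology: when $\tau$ is orientable one can fix the orientation and reinterpret a weight $\eta \in W(G,f)$, via the sign conventions coming from that orientation, as a $1$-chain in $G$ satisfying the cycle (boundary) condition at every vertex; the switch condition at each vertex becomes exactly the condition that the signed sum of incident edge-weights is zero, i.e.\ the cycle condition. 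This gives $W(G,f)\cong Z_1(G;\R)$, and since $G$ is a graph $Z_1 = H_1$; finally $H_1(G;\R)\cong H_1(S;\R)$ because $G$ is a deformation retract of the (punctured) surface $S$.

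The main obstacle I anticipate is making the "cancellation of shared edge-weights governs the rank" bookkeeping precise, in particular correctly tracking the parity of the gate-position at which each edge-end sits and showing that the resulting sign constraints on the coefficients $c_v$ assemble into an honest orientation of $\tau$. One must be careful that an edge can enter a vertex twice (a loop edge at $v$), that partial vertices have one fewer infinitesimal edge than gates, and that the "missing side" of a partial polygon does not obstruct orienting its infinitesimal edges — so partial vertices impose exactly the same kind of single alternating-sum constraint as even vertices and are orientable on their own. Getting these local sign conventions consistent, so that "a nontrivial row-dependency exists" is literally synonymous with "$\tau$ is orientable," is the crux; once that equivalence is in hand, both parts of the lemma drop out by rank counting, and the homological identification in part (1) is then essentially a matter of unwinding definitions.
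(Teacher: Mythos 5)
Your proposal is correct, but it follows a genuinely different route from the paper's proof. The paper does not compute the rank of the alternating-sum map directly: for part (2) it cites Penner--Harer, Lemma~2.1.1 of \cite{PH}, for the linear independence of the switch conditions at the non-odd vertices, and for part (1) it exhibits a single dependency (summing the rows after choosing the gate numbering compatibly with the orientation) and then obtains the dimension from the separate identification $W(G,f)\cong Z_1(G;\R)$, which simultaneously delivers the homological part of the statement. You instead give a unified, self-contained rank computation: a relation $\sum_v c_v\,(\mathrm{row}_v)=0$ forces $(-1)^{\epsilon}c_v+(-1)^{\epsilon'}c_{v'}=0$ edge by edge, propagation shows $|c_v|$ is constant, odd vertices force $c_v=0$ (a coordinate on an edge into an odd vertex appears in only one row), and a nonzero solution produces an alternating inward/outward pattern at every gate and hence an orientation of $\tau$ once the (necessarily even or partial) infinitesimal polygons are oriented compatibly. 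This gives the clean dichotomy --- the space of dependencies is one-dimensional exactly when $\tau$ is orientable and trivial otherwise --- from which both dimension formulas follow at once. Your approach avoids the external citation and treats both cases by a single argument; the paper's approach avoids the gate-parity bookkeeping entirely and gets the $Z_1\cong H_1$ identification for free, which you still need to add separately to finish the last claim of part (1), as you acknowledge. The crux you flag --- that the local sign constraints really do assemble into an orientation of the whole train track when a nontrivial dependency exists, including for loop edges and for partial vertices --- does check out, so the plan is sound.
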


\begin{proof}
Assume that $\tau$ is orientable. 
By Lemma~\ref{vertextypelem}, $G$ has no odd vertices. 
For $\eta \in W(G,f)$ and a non-odd vertex $v$, let 
$$w_i^v:= w_i^v(\eta)= \mbox{ the weight of } \eta \mbox{ at the } i^{\rm th} \mbox{ gate of the vertex }v.$$
By Lemma~\ref{switchlem},  $\sum_{i}(-1)^i w^v_i = 0$.
We number the gates at $v$ so that the orientation of the real edges at $2i$-th (resp. $(2i+1)$-th) gate is inward (resp. outward).  
If $G$ has $m$ vertices, $v_1, \cdots, v_m$, then we have a system of
$m$ equations:
\begin{eqnarray*}
w^{v_1}_0 + w^{v_1}_2 + w^{v_1}_4 + \cdots &=&
w^{v_1}_1 + w^{v_1}_3 + w^{v_1}_5 + \cdots, \\
w^{v_2}_0 + w^{v_2}_2 + w^{v_2}_4 + \cdots &=&
w^{v_2}_1 + w^{v_2}_3 + w^{v_2}_5 + \cdots, \\
& \vdots & \\
w^{v_m}_0 + w^{v_m}_2 + w^{v_m}_4 + \cdots &=&
w^{v_m}_1 + w^{v_m}_3 + w^{v_m}_5 + \cdots. 
\end{eqnarray*}
The sum of the left hand sides is equal to the sum of the right hand sides.
Since $\tau$ is oriented, the last equation follows from the other $m-1$ equations, i.e., the switch conditions are {\em not} independent.  
Therefore, 
\begin{eqnarray*} 
\dim W(G,f) &=& \dim V(G) - (m-1) \\
&=& \#(\mbox{edges of } G) - \#(\mbox{vertices of } G) +1. 
\end{eqnarray*}

With respect to the orientations of the edges of $G$, let $\partial : C_1(G; \R) \to C_0(G; \R)$ be the boundary map of the chain complex. 
There is a natural isomorphism  $V(G) \cong C_1(G; \R)$.
If $\gamma \in Z_1(G; \R)$ is a cycle, then the cycle condition $\partial \gamma=0$ is equivalent to the alternating sum condition $\sum_i (-1)^i w_i^v(\gamma) = 0$  at each vertex $v\in G$. 
By Lemma~\ref{switchlem} we obtain $W(G, f)\cong Z_1(G; \R)$. 
In fact, the Euler characteristic of $s$-punctured genus $g$ surface $S$ is;
$\chi(S)=2-2g-s = \#(\mbox{vertices of } G) - \#(\mbox{edges of } G).$
Thus $\dim W(G, f)=2g+s-1=\dim H_1(S; \R)$.

In the non-orientable case, the switch conditions are satisfied if and only if the alternating sum of the weights of gates around each even or partial vertex is zero (Lemma~\ref{switchlem}).  
Moreover, all these conditions are independent of each other (see Lemma 2.1.1 of \cite{PH}), so that the number of independent constraints is the number of non-odd vertices.  Since $\dim V(G)$ is the
number of edges, statement (2) follows.
\end{proof}

Now we know that $V(G)$ can be identified with the $1$-chains $C_1(G)$ in the orientable case, we can extend the boundary map $\partial : C_1(G) \to C_0(G)$ to the following map $\delta$ on $V(G)$:

\begin{definition}{\bf (The map $\delta$)}\label{def of delta}
Assume that the graph $G$ has $m$ non-odd vertices, $v_1, \cdots, v_m$. 
Define a linear map $\delta : V(G) \to \R^m,$ $\eta \mapsto \delta(\eta)$ such that 
$$(k^{\rm th} \mbox{ entry of the vector } \delta(\eta))=
\sum_i (-1)^i w_i^{v_k} (\eta),$$
the alternating sum of the weights of $\eta$ at the gates incident to the vertex $v_k$. These weights satisfy the following conditions:
 
\begin{itemize} 
\item 
If $\tau$ is oriented, then we determine the sign of each gate to be compatible with the orientation of the real edges of $\tau$. 
The alternating sum is defined without ambiguity. 
For an example, in Figure~\ref{fig:filling-curves-tt}, left sketch,  a  plus (resp. minus) sign may be assigned at each gate, according as the real edges are oriented  toward (resp. away from) the gate. 

\item 
If $\tau$ is non-orientable, we assign alternating signs to the incident gates for each non-odd vertex.  Since $\tau$ is non-orientable, the assignments will be local and not global. 
Clearly, the alternating sum depends on the choice of the sign assignment.  
\end{itemize}
\end{definition}

\begin{lemma}\label{lem:mapdelta}
In both the orientable and non-orientable cases $W(G, f) \cong \ker\delta$.  
Moreover, if $m$ is the number of non-odd
vertices of $G$, then
$$
\dim (\im\delta) = 
\left\{
\begin{array}{ll}
m - 1,& \mbox{if }\tau \mbox{ is orientable,} \\
m, & \mbox{if }\tau \mbox{ is non-orientable.}
\end{array}
\right.
$$
\end{lemma}

\begin{proof}
Lemma~\ref{switchlem}, immediately implies that $W(G,f) \cong \ker\delta.$ The dimension count follows from Lemma~\ref{lem:dimW(G,f)}.
\end{proof}

Finally we prove:

\begin{theorem}[First Decomposition]\label{thm:1st decomposition}
Let $h(x)=\chi(f_\ast|_{W(G,f)})$, then 
\begin{eqnarray} 
V(G) & \cong &  W(G,f) \oplus \im \delta.  \label{direct sum} \\
\chi(f_\ast) & = & h(x) \chi(f_\ast|_{\im\delta}). \label{polynomial}
\end{eqnarray}
The degree of $h(x)$ (resp. $\chi(f_\ast|_{\im\delta})$) is the dimension of 
$W(G,f)$ (resp. $\im\delta$), as given in Lemma~\ref{lem:dimW(G,f)} (resp. Lemma~\ref{lem:mapdelta}).
\end{theorem}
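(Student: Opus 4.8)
The plan is to establish the direct sum decomposition first and then read off the polynomial factorization as a formal consequence. By Lemma~\ref{lem:mapdelta} we know $W(G,f) \cong \ker\delta$, so the rank-nullity theorem gives $\dim W(G,f) + \dim(\im\delta) = \dim V(G)$. Hence to prove \eqref{direct sum} it suffices to show the sum is internal, i.e. that $V(G)$ contains a copy of $\im\delta$ complementary to $\ker\delta$ which is moreover $f_\ast$-invariant. The natural candidate is a section: I would exhibit an explicit $f_\ast$-equivariant splitting of the short exact sequence $0 \to \ker\delta \to V(G) \xrightarrow{\delta} \im\delta \to 0$. Since $\delta$ is built vertex-by-vertex from alternating sums of gate weights, and $f_\ast$ permutes the non-odd vertices (Lemma~\ref{prop333}) carrying the gate structure at $v$ bijectively and cyclic-order-preservingly to that at $f(v)$, the map $\delta$ intertwines $f_\ast$ on $V(G)$ with a permutation-type action on $\R^m$ (orientable case: on the quotient $\R^m/\langle(1,\dots,1)\rangle$ or the corresponding subspace, because of the one dependent relation). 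Equivariant splittings of such sequences exist because the relevant matrices have integer (indeed $\{0,\pm 1\}$) entries and the obstruction vanishes — most transparently, one can choose for each $f_\ast$-orbit of non-odd vertices a single ``designated'' real edge incident to a gate of one vertex in the orbit, propagate the choice around the orbit using $f_\ast$, and use the weights on these designated edges to pin down an $f_\ast$-invariant complement to $\ker\delta$.

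Concretely, the key steps in order would be: (i) recall from Lemma~\ref{lem:mapdelta} that $W(G,f)=\ker\delta$ and that $\dim(\im\delta)$ equals $m-1$ or $m$ as stated; (ii) verify that $\delta \circ f_\ast = \Phi \circ \delta$ for an explicit linear endomorphism $\Phi$ of $\R^m$ (or its appropriate $f_\ast$-invariant subspace in the orientable case), using Lemma~\ref{prop333} to track how $f_\ast$ carries the gates of $v$ to those of $f(v)$ — the sub-edge-path cancellation argument already appearing in the proof of Lemma~\ref{eq:inclusion} shows that edges of $G$ passing through $f(v)$ contribute $0$ to the alternating sum, so that the alternating sum at $f(v)$ depends only on the alternating sum at $v$; (iii) construct an $f_\ast$-invariant complement $U \subseteq V(G)$ with $\delta|_U : U \to \im\delta$ an isomorphism, by choosing orbit representatives among the non-odd vertices and designated incident real edges as above; (iv) conclude $V(G) = W(G,f)\oplus U$ as $f_\ast$-modules, and identify $U \cong \im\delta$ as an $f_\ast$-module via $\delta$. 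Then \eqref{polynomial} is immediate: the characteristic polynomial of a block-triangular (here block-diagonal) matrix is the product of the characteristic polynomials of the blocks, so $\chi(f_\ast) = \chi(f_\ast|_{W(G,f)})\,\chi(f_\ast|_U) = \chi(f_\ast|_{W(G,f)})\,\chi(f_\ast|_{\im\delta})$. The degree statement follows since the degree of a characteristic polynomial equals the dimension of the space it acts on, and these dimensions were computed in Lemmas~\ref{lem:dimW(G,f)} and \ref{lem:mapdelta}.

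The main obstacle I expect is step (iii): producing an honestly $f_\ast$-invariant complement rather than merely a vector-space complement. A naive complement (e.g. spanned by some fixed coordinate subspace of $V(G)$) need not be preserved by $f_\ast$, and in the orientable case one must also handle the single linear dependence among the $m$ switch conditions — the ``designated edge'' data has to be chosen compatibly around each $f_\ast$-orbit of vertices, and one must check the resulting $\delta|_U$ is genuinely bijective onto $\im\delta$ and not just injective. An alternative, possibly cleaner, route that sidesteps building an explicit section is to argue purely at the level of characteristic polynomials: $\chi(f_\ast) = \chi(f_\ast|_{\ker\delta}) \cdot \chi(\bar f_\ast)$ where $\bar f_\ast$ is the induced map on $V(G)/\ker\delta \cong \im\delta$, which requires only that $\ker\delta$ is $f_\ast$-invariant (Lemma~\ref{eq:inclusion}) — this gives \eqref{polynomial} with $\chi(f_\ast|_{\im\delta})$ interpreted as $\chi(\bar f_\ast)$, and \eqref{direct sum} then holds as an isomorphism of vector spaces (not necessarily a canonical internal splitting) simply by dimension count. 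I would present the quotient-map argument as the spine of the proof and mention the equivariant section only if the paper genuinely needs the internal direct sum downstream.
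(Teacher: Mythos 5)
Your ``alternative, possibly cleaner, route'' at the end is precisely what the paper does: it identifies $\im\delta$ with the quotient $V(G)/W(G,f)$ (via rank--nullity), invokes Lemma~\ref{eq:inclusion} for $f_\ast$-invariance of $W(G,f)=\ker\delta$, and reads off \eqref{polynomial} from the block-triangular structure relative to the filtration $W(G,f)\subset V(G)$, with $\chi(f_\ast|_{\im\delta})$ implicitly meaning the characteristic polynomial of the induced map on the quotient (equivalently, of the map $\Phi$ on $\im\delta$ intertwined with $f_\ast$ by $\delta$). The bulk of your proposal --- the construction of an explicit $f_\ast$-equivariant section via designated real edges and orbit representatives --- is unnecessary and is not what the paper does; the theorem only asserts a vector-space isomorphism in \eqref{direct sum}, and the polynomial factorization needs only invariance of $\ker\delta$, not an invariant complement. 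You correctly flagged this yourself, so the only advice is to lead with the quotient argument and drop the splitting construction entirely: building an honestly $f_\ast$-invariant complement is a genuinely harder task (it amounts to semisimplicity of $f_\ast$ relative to this filtration, which is neither needed nor established), and pursuing it would introduce gaps of exactly the kind you anticipated in step (iii).
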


\begin{proof}
From Lemma~\ref{lem:mapdelta}, identifying $\im \delta$ with the quotient $V(G)/W(G,f)$ we obtain (\ref{direct sum}).
By the same argument as in the proof of Lemma~\ref{eq:inclusion}, we obtain $f_\ast (\im\delta) \subset \im\delta$. 
This along with $f_\ast (W(G,f)) \subset W(G,f)$ yields (\ref{polynomial}).  
\end{proof}


\subsection{A basis for $W(G,f)$ and the matrix for the action of  $f_\ast$ on $W(G,f)$} \label{subsec:basis for W(G,f)}  

The goal of this section is to find a basis for $W(G,f) \subset V(G)$ and learn how to determine the action of $f_\ast$ on the basis.
With regard to the basis, it will be convenient to consider three cases separately: the cases when $\tau$ is orientable,  when $\tau$ is non-orientable and has odd vertices, and when $\tau$ is non-orientable but has no odd vertices.  That is accomplished in $\S$\ref{sssec:basis1}, \ref{sssec:basis2} and \ref{sssec:basis3}.   Having the basis in hand, in $\S$\ref{sssec:basis4} we learn how to compute the action of $f_\ast$ on the basis elements. 

\subsubsection{Basis for $W(G,f)$, orientable case}\label{sssec:basis1}

If the train track $\tau$ associated to $G$ is orientable, we choose an orientation for $\tau$, thereby inducing an orientation on the edges of $G$, and choose a maximal spanning tree $Y \subset G$.  Every vertex of $G$ will be in $Y$.  We consider all edges $e$ of $G$ that are not in $Y$, and construct a set of vectors $\{\eta_e \in V(G)\}$ and prove that the constructed set is a basis for $W(G,f) \subset V(G)$.

For each $e\in G\setminus Y$, find the unique shortest path in $Y$  joining the endpoints of $e$.  
The union of this path and $e$ forms an oriented loop $L_e$ in which the edge $e$ appears exactly once, the orientation being determined by that on $e$. If the orientation of edge $e' \subset G$ agrees (resp. disagrees) with the orientation of $e'\subset L_e$, then we assign a weight of $1$ (resp. $-1$) to $e'$.   In particular, $e\subset L_e$ has weight 1. 
The edges not in $L_e$ are assigned weight of $0$.  In this way we obtain a vector $\eta_e \in V(G)$ whose entries are the assigned weights. 
By construction, $\eta_e$ satisfies the criterion for a transverse measure, described in Lemma~\ref{switchlem}, therefore $\eta_e$ is an element of $W(G,f)$. 

We now show that $\{\eta_{e_1},\dots,\eta_{e_l}\}$ is a basis for $W(G,f)$ where $e_1, \cdots, e_l$ are the edges of $G\setminus Y$  
and $l = \#( \mbox{edges of }G) - \#(\mbox{vertices of }G) +1.$
Note that if $e_i,e_j\in (G\setminus Y)$ with $i \not= j$, then $e_j\notin  L_{e_i}$. Therefore $\eta_{e_i}(e_j) = 1$ if and only if $i=j$, because all edges not in $ L_{e_i}$ have weight $0$, i.e., the vectors $\eta_{e_1}, \cdots, \eta_{e_l}$ are linearly independent. Consulting Lemma~\ref{lem:dimW(G,f)} we see that we have the right number of linearly independent elements, so we have found a basis for $W(G,f)$. 

\begin{example}\label{ex-of-basis-orientable}
Go to $\S$\ref{sec:examples} below and
see Example~\ref{ex:filling-curves} and its accompanying Figure~\ref{fig:filling-curves-tt}-(1).  The train track for this example is orientable.  The space $V(G)$ has dimension 5. Order the edges of $G$ as $a,b,c,d,e$.   The edge $a$ and  the vertices $v_0,v_1$ form a maximal tree $Y \subset G$, with edges $b,c,d,e \notin Y$, so that $W(G,f)$ has dimension 4.  We have the loops $L_b = ab$; $L_c= \overline ac$; $L_d = \overline ad$;  $L_e = ae$, so that $W(G,f)$ has basis  
$\eta_b = (1,1,0,0,0)'$; $\eta_c = (-1,0,1,0,0)'$; $\eta_d = (-1,0,0,1,0)'$; $\eta_e=(1,0,0,0,1)'$, where `prime' means transpose.   
\end{example}

\subsubsection{Basis for $W(G,f)$, non-orientable case with odd vertices}\label{sssec:basis2}

If $G$ has an odd vertex $v_0$ then choose a maximal spanning tree $Y \subset G$.
Let $V$ be the set of vertices of $G$.
Define a height function $h: V \to \N\cup \{0\}$ by 
$$h(v)= (\mbox{the distance between $v$ and $v_0$ in }Y).$$
We obtain a forest $Y' \subset Y$ by removing from $Y$ all the edges each of which connects an odd vertex and the adjacent vertex of smaller height. 
See Figure~\ref{forest}. 
\begin{figure}[htpb!]
\begin{center}
\psfrag{v0}{$v_0$}
\psfrag{v1}{$v_1$}
\psfrag{v2}{$v_2$}
\psfrag{v3}{$v_3$}
\psfrag{v4}{$v_4$}
\includegraphics[width=.9\textwidth]{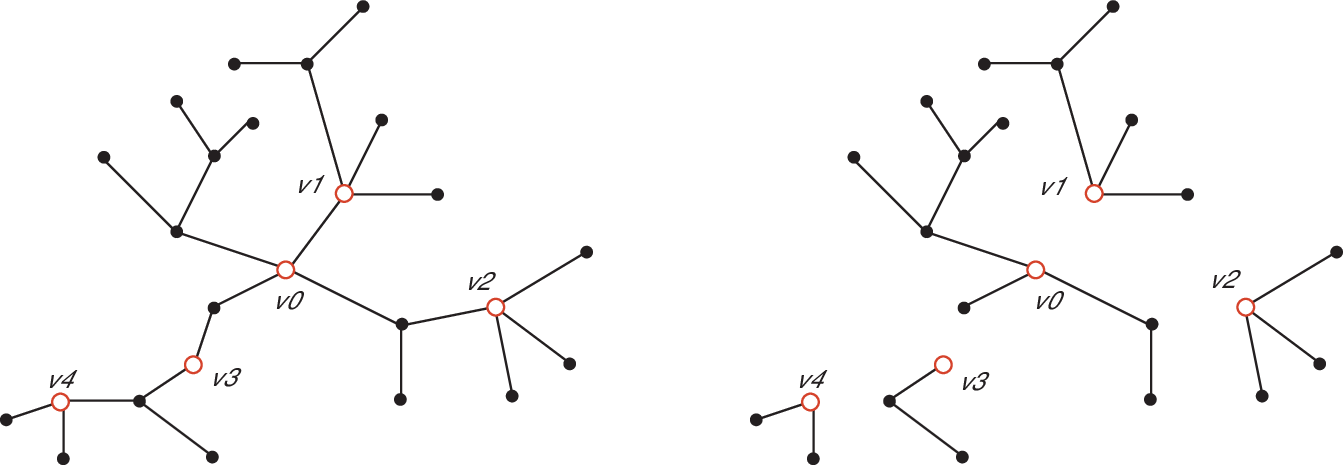}
\end{center}
\caption{A tree $Y$ (left) and a forest $Y'$ (right). 
Hollow dots $v_0, \cdots, v_4$, are odd vertices. Black dots are non-odd vertices.}
\label{forest}
\end{figure}
The forest $Y'$ contains all the vertices of $G$ with exactly one odd vertex in each connected component.  

Now, let $e$ be an edge that is not in $Y'.$  
We can find two (possibly empty) shortest paths in $Y'$ each of which connects an endpoint of $e$ to an odd vertex.
The union of $e$ and the two paths forms an arc, which we denote by $L_e$.
(If both of the endpoints of $e$ belong to the same tree component of $Y'$, then $L_e$ becomes a loop containing one odd vertex.)
Assign a weight of $1$ to $e$ and weight of $0$ to the edges that are not in $L_e$.
To the other edges in $L_e$, we assign weights of $\pm 1$ so that at each non-odd vertex the criterion of transverse measure (Lemma~\ref{switchlem}) is satisfied.
This defines an element $\eta_e$ of $W(G,f)$.

Let  $e_1, \cdots, e_l$ be the edges of $G \setminus Y'$.  By the construction, we have $\eta_{e_i}(e_j) = 1$ if and only if $i=j$,  so the vectors $\eta_{e_1}, \cdots, \eta_{e_l}$ are linearly independent.
Since $l = \#(\mbox{edges of }G) - \#(\mbox{non-odd vertices of }G),$
Lemma~\ref{lem:dimW(G,f)} tells us that $l = \dim W(G,f)$, hence $\{\eta_{e_1}, \cdots, \eta_{e_l}\}$ is a basis of $W(G,f)$. 

\begin{example}  
See Example~\ref{ex:evenodd}. 
The graph $G$ has two odd vertices $v_0$ and $v_4$.
We choose a maximal tree $Y$ whose edges are $a, c, d, j$.  This gives us a forest $Y'$ with two components. 
One consists of the single vertex $v_4$, and the other consists of vertices $v_0, v_1, v_2, v_3,$ and edges $a, c, j$.
The edge $h$ is not in $Y'$, and its endpoints are $v_1$ and $v_4.$  The associated arc $L_h = c h.$
Since $c$ and $h$ share the same gate at $v_1$, $\eta_h$ satisfies 
$\eta_h(c) = -1,$ $\eta_h(h) = 1$, and $0$ for rest of the edges.
The graph $G$ has 10 edges $a,b, \dots, j$,  with $a, c, j$ in $Y'$.  The vector space  $W(G,f)$ has dimension 7. The edge $h$ is not in $Y'$, and its endpoints are $v_1$ and $v_4.$   Then $L_h = ch.$  
Since $c$ and $h$ share the same gate at $v_1$, $\eta_h$ satisfies 
$\eta_h(c) = -1,$ $\eta_h(h) = 1$, and $0$ for rest of the edges.
The edge $b$ is also not in $Y'$, and its endpoints are $v_2$ and $v_3$,   so $L_b = cjbac$ is an arc whose endpoints coincide at $v_0$ and $\eta_b$ satisfies 
$\eta_b(a) = \eta_b(b) = \eta_b(j)=1$, and $0$ for rest of the edges.   The other five basis elements are constructed in a similar way.
\end{example}

\subsubsection{Basis for $W(G,f)$, non-orientable case with no odd vertices}\label{sssec:basis3}

In this case, we can find a simple loop $\mathcal L_0 \subset G$ that does not admit an orientation consistent with the train track $\tau$.  
If $\mathcal L_0$ misses any vertices of $G,$ then we define $\mathcal L_1$ by adding an edge with exactly one vertex in $\mathcal L_0.$ 
If $\mathcal L_1$ misses any vertices, we define $\mathcal L_2$ by adding an edge with exactly one vertex in $\mathcal L_1,$ etc. 
Ultimately we obtain a connected subgraph $\mathcal L$ that is homotopy equivalent to a circle and contains all vertices of $G$.  

If $e$ is an edge outside $\mathcal L,$ we can find paths in $\mathcal L$ from each endpoint of $e$ to the loop $\mathcal L_0,$  resulting in a path $L_\bullet$ that contains $e$ and with endpoints in $\mathcal L_0$.  
Now we can find paths $L_1, L_2$ in $\mathcal L_0$ so that $L_1 \cup L_2 = \mathcal L_0$ and the endpoints of $L_1, L_2$, called $v_a$ and $v_b$, agree with those of $L_\bullet$.
It is possible that $v_a$ and $v_b$ are the same vertex, in which case we set $L_2 = \emptyset.$  (This happens in Example ~\ref{ex-basis-non-ori2}, where our construction is applied to Example~\ref{ex:penner}.) 

Let $\eta^0 \in V(G)$ be a vector which assigns $1$ to edge $e$, $\pm 1$ to the other edges in $L_\bullet$, and $0$ to the edges not in $L_\bullet$, so that the alternating weight sum is $0$ at all the vertices but $v_a, v_b$.  

Next, let $\eta^1 \in V(G)$ (resp. $\eta^2 \in V(G)$) be a vector which assigns $\pm 1$ to the edges of $L_1$ (resp. $L_2$) and $0$ to the other edges not in $L_1$ (resp. $L_2$) so that $\eta^0 + \eta^1$ (resp. $\eta^0 + \eta^2$) has the alternating sum of weights equal to $0$ at all the vertices of $G$ but $v_a$. 
In particular, at vertex $v_b$, for $i=1,2$,
$(\mbox{the alternating sum of weights of } \eta^0 + \eta^i)=0,$
hence
$$(\mbox{the alternating sum of weights of } \eta^1 - \eta^2 \mbox{ at } v_b)=0.$$
While, at vertex $v_a$ 
$$(\mbox{the alternating sum of weights of } \eta^1 - \eta^2 \mbox{ at } v_a)=\pm 2.$$
For, if it were $0$ then the loop $\mathcal L_0$ can admit an orientation consistent with $\tau$, which is a contradiction. 
Therefore, at $v_a$,  
$$(\mbox{the signed weight of }\eta^1) = 
- (\mbox{the signed weight of }\eta^2).$$
This means, at $v_a$, we have
$(\mbox{the alternating sum of weights of } \eta^0 + \eta^1)=0$ 
if and only if
$(\mbox{the alternating sum of weights of } \eta^0 + \eta^2) \neq 0$.

If $(\mbox{the alternating sum of weights of } \eta^0 + \eta^1)=0$ then define $\eta_e := \eta^0 + \eta^1$ and $L_e := L_\bullet \cup L_1$.
Otherwise define $\eta_e := \eta^0 + \eta^2$ and $L_e := L_\bullet \cup L_2$.
By construction, $\eta_e$ satisfies the alternating sum condition of Lemma~\ref{switchlem}, hence $\eta_e \in W(G,f)$.

Note that the number of edges in $\mathcal L$ is equal to the number of vertices of $G$, so by Lemma~\ref{lem:dimW(G,f)} the number $l$ of edges outside $\mathcal L$ is equal to $\dim W(G,f)$. 
Suppose $e_1, \cdots, e_l$ are the edges of $G \setminus \mathcal L$, then $\eta_{e_i}(e_j) = 1$ if and only if $i=j$,  
i.e., vectors $\eta_{e_1}, \cdots, \eta_{e_l}$ are linearly independent.
This proves that $\{\eta_{e_1}, \cdots, \eta_{e_l}\}$ is a basis of $W(G,f)$.

\begin{example}\label{ex-basis-non-ori2}
See Example~\ref{ex:penner}. 
The partial vertex $v_0$ has ten gates.
We assign signs alternatively to the gates, which imposes orientations on the edges $b, c, e$.
However, the gates of each edge $a, d$ or $f$ have the same sign, hence $a, d, f$ do not admit consistent orientations.
We may choose the loop $\mathcal L=\mathcal L_0$ to be the union of $v_0$ and the edge $f$. 
For the edge $b$, the loop $L_b$ consists of a single edge $b$ and $v_0$. 
The element $\eta_b$ has 
$$\eta_b(b)=1, \mbox{ and } \ 
\eta_b(a)=\eta_b(c)=\eta_b(d)=\eta_b(e)=\eta_b(f)=0.$$
For the edge $a$, the loop $L_a = a \cup f$ and $\eta_a$ has
$$\eta_a(a)=\eta_a(f)=1, \mbox{ and } \
\eta_a(b)=\eta_a(c)=\eta_a(d)=\eta_a(e)=0.$$
The alternating sum of the weights of the ten gates is zero for both $\eta_a$ and $\eta_b$.
Lemma~\ref{switchlem} guarantees that $\eta_a, \eta_b \in W(G,f)$
\end{example}

\subsubsection{The matrix for the action of $f_*$ on $W(G,f)$} \label{sssec:basis4}
With respect to the basis of $W(G,f)$ described in $\S$\ref{sssec:basis1}, \ref{sssec:basis2} and \ref{sssec:basis3}, let $A$ denote the matrix representing the map $f_\ast |_{W(G,f)}$, 
With this $A$ we can compute the homology polynomial $h(x)$.

We compute $A$ explicitly as follows:   
Let $e_1, \cdots, e_n$ be the edges of $G$.
Let $\zeta_1, \cdots, \zeta_n$ be the standard basis of $V(G)\cong \R^n$, where $\zeta_i(e_j)= \delta_{i, j}$ the Kronecker delta.
Let $l=\dim W(G,f)$.
Suppose that $\{\eta_1, \cdots, \eta_l\}$ is a basis constructed as in $\S$\ref{sssec:basis1}, \ref{sssec:basis2} and \ref{sssec:basis3}.
Reordering the labels, if necessary, we may assume $\eta_j = \eta_{e_j}$ for $j = 1, \cdots, l$.  
Now let $Q$ be an $n \times l$ matrix whose entries $q_{i,j} \in \Z$ satisfy 
$\eta_j= \sum_{i=1}^n q_{i,j} \zeta_i$. 
Let $T$ be the transition matrix for the train track map $f:G\to G$, and let $P: \R^n \to \R^l$ be the projection onto the first $l$ coordinates. 
Then $A$ is an $l \times l$ integer matrix with
$A=PTQ.$
See Example 5.1 for the calculation of the matrix $A$. 
We note that Corollary~\ref{cor:isomorphism of f_ast},  in the next section,  implies that $A \in GL(l, \Z)$.

\begin{remark}
A related question is the computation of the `vertex polynomial' $f_\ast |_{\im\delta}$, even though that polynomial is not a topological invariant and so is only of passing interest.  We mention it because in special cases it may be easiest to compute the homology polynomial from the characteristic polynomial of $T$ by diving by the vertex polynomial.     
See examples~\ref{ex:k89} and \ref{ex:evenodd} below, where the computation of the vertex polynomial is carried out in two cases, using data that is supplied by XTrain.
\end{remark}

\subsection{The orientation cover and the homology polynomial} \label{subsec:homology poly} 

While we have the first decomposition theorem in hand,  and have learned how to compute the homology polynomial, that is the characteristic polynomial of the action of $f_\ast$ on $W(G,f)$, we have not proved that it is an invariant of $[F]$ and we do not understand its topological meaning.  All that will be remedied in this section.  Our work begins by recalling the definition of the orientation cover $\tilde S$ of $S$,  introduced by Thurston (p.427 of \cite{T}), see also p.184 of \cite{PH}. After that we will establish several of its properties. See Proposition~\ref{prop: orientation cover is orientable}.   In Theorem~\ref{thm:involution} and Corollary~\ref{cor:isomorphism of f_ast} we study the homology space $H_1(\tilde S; \R)$ and its relationship to our vector space $W(G,f)$ in the case when $\tau$ is non-orientable.  At the end of the section, in 
Corollary~\ref{cor:homology polynomial is invariant}, we establish the important and fundamental result that the homology polynomial is an invariant of the mapping class $[F]$ in Mod$(S)$.

\begin{definition} {\bf (Angle between two branches at a switch)} \label{def:angles at a switch} 
For a switch in $\tau$, fix a very small neighborhood that only contains the switch and the branches meeting at the switch. 
Within this neighborhood, we orient the branches in the direction outward from the switch.
This allows us to define the {\em angle} between two branches that meet at the switch.  
Since they always meet tangentially, this angle is either $0$ or $\pi$.  
If the angle is $0$, then we say that the branches form a {\em corner}.
For example, the angle between the branches $a$  and $b_1$ in Figure~\ref{fig:switch} (B) is $\pi$, whereas the angle between the branches $b_1$ and $b_2$ is $0$ and $b_1, b_2$ form a corner. 
\end{definition}

\begin{definition}{\bf (The orientation cover)}\label{def:orientation cover}
Let $F:S\to S$ be a pA homeomorphism with non-orientable train track $\tau$.  
Add a puncture to $S$ for each 2-cell of $S \setminus \tau$ corresponding to an odd or even vertex of $G$.  
The resulting surface $S'$ deformation retracts to
$\tau$, i.e., $\pi_1(S') = \pi_1(\tau)$.  
Each (not necessarily smooth) loop $\gamma \subset \tau$ consists of branches of $\tau$. 
We define a homomorphism 
$\theta:\pi_1(\tau)\to \mathbb Z/2 \mathbb Z$ 
which maps a loop in $\tau$ to $0$ if and only if it has an even number of corners (see Definition~\ref{def:angles at a switch}).  The {\em orientation cover} $p:\tilde S\to S$ associated to $\tau$ is obtained from the double cover $\tilde{S'}$ of $S$ corresponding to $\ker\theta$ by filling in the punctures in  $\tilde{S'}$ that do not belong to the original punctures of $S$.

At the same time, the non-orientable train track $\tau$ lifts to an orientable train track $\tilde\tau \subset \tilde S$. 
Collapsing the infinitesimal $($partial$)$ polygons in $\tilde\tau$ to vertices, we obtain a graph $\tilde G$, that is a double branched cover of $G$.
\end{definition}

Note that the branch points of $p: \tilde S\to S$ are precisely the odd
vertices of $G$.  
Intuitively, the effect of passing to the orientation cover
is a partial unrolling of loops in $\tau$ that do not admit a consistent
orientation.  
For example, Figure~\ref{fig:unroll} illustrates what happens near the branch point for a vertex of valence three.
\begin{figure}[htpb!]
\begin{center}
\psfrag{a}{$a$}
\psfrag{b}{$b$}
\psfrag{c}{$c$}
\psfrag{ta}{$\tilde a$}
\psfrag{tb}{$\tilde b$}
\psfrag{tc}{$\tilde c$}
\psfrag{tap}{$\tilde a'$}
\psfrag{tbp}{$\tilde b'$}
\psfrag{tcp}{$\tilde c'$}
\includegraphics[width=.65\textwidth]{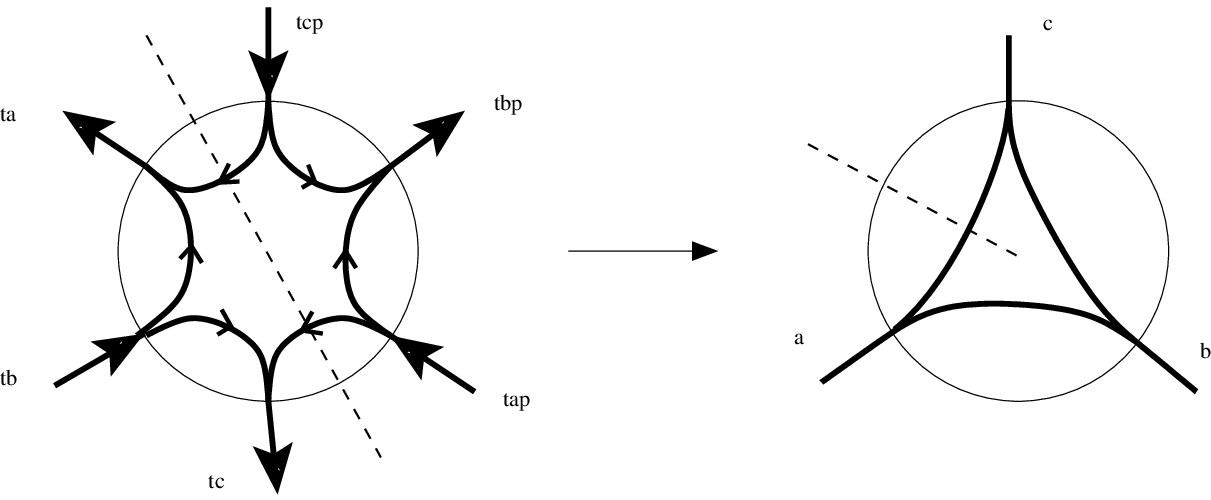}
\end{center}
\caption{Unrolling an odd vertex.}\label{fig:unroll}
\end{figure}

\begin{proposition}\label{prop: orientation cover is orientable}
Assume that $\tau$ is non-orientable.
\begin{enumerate}
\item The orientation cover $\tilde\tau$ is an orientable train track.
The natural involution $\iota: \tilde S\to \tilde S$, or the deck transformation, reverses the
orientation of $\tilde\tau$.
\item A puncture of $S$ corresponds to two punctures of $\tilde S$ if
and only if a loop around the puncture is homotopic to a loop in $\tau$
with an even number of corners.  
Otherwise, the puncture lifts to one puncture in $\tilde S$.
\end{enumerate}
\end{proposition}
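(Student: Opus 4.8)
The plan is to reduce everything to the fact that, before any punctures are filled in, $\tilde\tau$ is the pullback of a genuine unbranched double cover. Recall that $\tilde S$ is produced from the double cover $\tilde{S'}\to S'$ classified by $\ker\theta\subseteq\pi_1(S')=\pi_1(\tau)$ by filling in certain punctures; those punctures, and the punctures of $S'$ lying under them, all sit in $S'\setminus\tau$, so $\tilde\tau=p^{-1}(\tau)$ actually lies in $\tilde{S'}$ and $p$ restricts to an honest double covering $p\colon\tilde\tau\to\tau$. It is connected, because non-orientability of $\tau$ forces $\theta$ to be surjective and hence $\ker\theta$ to have index $2$. Being a local diffeomorphism onto $\tau$, this covering equips $\tilde\tau$ with a train-track structure in which each switch has exactly the same local model---the same splitting of its branches into the two sides of the switch---as its image in $\tau$; consequently, for every loop $\tilde\gamma$ in $\tilde\tau$ the number of corners traversed by $\tilde\gamma$ equals the number of corners traversed by $p\circ\tilde\gamma$.

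For part~(1), I would first isolate the elementary fact behind Definition~\ref{def: tau orientable}: a connected train track $\sigma$ is orientable if and only if every loop in $\sigma$ traverses an even number of corners. (The forward direction is a sign count along a loop---running smoothly through a switch preserves, and turning a corner reverses, whether the traversal agrees with a fixed orientation, so closing up the loop forces an even corner count; the converse propagates a consistent orientation outward from a single branch by that same rule, the even-corner hypothesis guaranteeing well-definedness.) Granting this, any loop $\tilde\gamma$ in $\tilde\tau$ satisfies $p_\ast[\tilde\gamma]\in\ker\theta$, so $p\circ\tilde\gamma$, and therefore $\tilde\gamma$ itself, traverses an even number of corners; hence $\tilde\tau$ is orientable. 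As for $\iota$: it restricts to the deck transformation of $\tilde{S'}\to S'$, which acts freely, so $\iota$ acts freely on $\tilde\tau$ with quotient $\tilde\tau/\iota=\tau$; since $\tilde\tau$ is connected, $\iota$ either preserves or reverses any chosen orientation of $\tilde\tau$, and were it to preserve one, that orientation would descend to an orientation of $\tau$, contradicting non-orientability. Hence $\iota$ reverses the orientation of $\tilde\tau$.

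For part~(2), I would argue entirely within covering-space theory for the honest double cover $p\colon\tilde{S'}\to S'$ and then account for the puncture-filling. Let $x$ be a puncture of $S$; it is also a puncture of $S'$, and a small loop $\delta_x$ around it represents a class of $\pi_1(S')=\pi_1(\tau)$, hence is homotopic to a loop in $\tau$. The usual dichotomy for a double cover says that $p^{-1}(x)$ is two points when $\theta(\delta_x)=0$ and a single point when $\theta(\delta_x)\neq 0$, and $\theta(\delta_x)=0$ holds exactly when $\delta_x$ is homotopic to a loop in $\tau$ with an even number of corners (this is the definition of $\theta$, together with homotopy invariance of the corner count mod $2$). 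Finally, $\tilde S$ is obtained from $\tilde{S'}$ by filling in only the punctures lying over the odd- and even-vertex punctures of $S'$, i.e., only those not lying over an original puncture of $S$; since $x$ is an original puncture, the point(s) of $p^{-1}(x)$ persist as puncture(s) of $\tilde S$, which is exactly the assertion.

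The only step here that is not routine bookkeeping is the orientability criterion invoked in part~(1), and I expect verifying it with care---in particular the point that the number of corners mod $2$ is a homotopy invariant, which is what makes $\theta$ a well-defined homomorphism and legitimizes the equivalences above---to be the main, though fairly mild, obstacle. Once that is in hand, the rest follows from the construction in Definition~\ref{def:orientation cover} by standard covering-space arguments.
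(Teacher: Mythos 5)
Your proof is correct and takes essentially the same approach as the paper's: both deduce orientability of $\tilde\tau$ from the fact that every loop in $\tilde\tau$ pushes down into $\ker\theta$ and hence has an even corner count, both obtain orientation-reversal of $\iota$ by the contradiction that an $\iota$-invariant orientation would descend to $\tau$, and both read part (2) off the standard lifting dichotomy for the double cover. Your write-up simply makes explicit several points the paper leaves tacit (connectedness of $\tilde\tau$, the loop/corner-parity criterion for orientability of a train track, homotopy-invariance of the corner count mod $2$, and the bookkeeping about which punctures get filled), which is a reasonable amount of added care rather than a different route.
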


\begin{proof}
(1) By Definition~\ref{def:orientation cover} we have $p_*(\pi_1(\tilde \tau))=\ker\theta$, hence every loop in $\tilde\tau$ has an even number of corners, and so $\tilde\tau$ can be consistently oriented.  If the
involution $\iota$ did not reverse the orientation of $\tilde\tau$, then
the orientation of $\tilde\tau$ would induce a consistent orientation of
$\tau$, but $\tau$ is not orientable.

(2) A loop in $\tau$ lifts to two loops in $\tilde\tau$ if and only if
it has an even number of corners.  If it has an odd number of corners, then its concatenation with itself has a unique lift.
\end{proof}

\begin{proposition}\label{or-cover-matrix}
A train track map $f:G\to G$ has two lifts
$\tilde f_{\rm op} : \tilde G\to \tilde G$, {\em orientation preserving}, and 
$\tilde f_{\rm or} : \tilde G\to \tilde G$, {\em orientation reversing}. 
They are related to each other by 
$\tilde f_{\rm or} = \iota \cdot \tilde f_{\rm op}$, 
where $\iota:\tilde G \to \tilde G$ is the deck translation.
Let $n$ be the number of edges in $G$.
Then there exist $n\times n$ non-negative matrices $A, B$ such that 
$A + B = T$, the transition matrix of $f$, and 
$\tilde f_{\rm op}$ and $\tilde f_{\rm or}$ are represented as:
$\tilde f_{\rm op} = 
\scriptsize
\begin{bmatrix}
A & B \\
B & A
\end{bmatrix}$
and 
$\tilde f_{\rm or} = 
\scriptsize
\begin{bmatrix}
B & A \\
A & B
\end{bmatrix}.
$
Their characteristic polynomials are
$\chi((\tilde f_{\rm op})_*) = \chi(f_*) \det(A-B)$ 
and
$\chi((\tilde f_{\rm or})_*) = \chi(f_*) \det(B-A).$
\end{proposition}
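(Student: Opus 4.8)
The plan is to set up explicit bases for the chain (or weight) groups of $\tilde G$ that are adapted to the covering $\tilde G \to G$, and then read off the block form of the lifts directly from how $f$ lifts. First I would fix a connected fundamental domain for the $\Z/2$-action on $\tilde G$: since $\tilde G$ is a double cover of $G$ (branched over the odd vertices), the preimage $p^{-1}(e_i)$ of each edge $e_i$ of $G$ consists of two edges $\tilde e_i$ and $\tilde e_i' = \iota(\tilde e_i)$; choosing one lift $\tilde e_i$ of each $e_i$ gives an ordered basis $\{\tilde e_1,\dots,\tilde e_n,\tilde e_1',\dots,\tilde e_n'\}$ of $V(\tilde G)\cong \R^{2n}$, in which $\iota$ acts by the block matrix $\begin{bmatrix} 0 & I \\ I & 0\end{bmatrix}$. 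The choice of which lift to call $\tilde e_i$ amounts to a choice of local orientation transported from a fixed orientation of $\tilde\tau$ (which exists by Proposition~\ref{prop: orientation cover is orientable}(1)); concretely, one orients $\tilde G$ and declares $\tilde e_i$ to be the lift consistent with that orientation, and $\tilde e_i'$ the one against it.

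Next I would analyze the orientation-preserving lift $\tilde f_{\rm op}$. The edge-path $f(e_j) = e_{i_1}^{\pm}e_{i_2}^{\pm}\cdots$ lifts, starting from $\tilde e_j$, to a path in $\tilde G$; each letter $e_i$ appearing in $f(e_j)$ lifts either to $\tilde e_i$ or to $\tilde e_i'$ depending on the parity of the number of corners accumulated along $f$ up to that point, which is exactly the data governing $\theta$ and hence the covering. Define $A_{ij}$ to be the number of times $e_i$ occurs in $f(e_j)$ with even accumulated corner-parity (so that it lifts to $\tilde e_i$) and $B_{ij}$ the number of times it occurs with odd parity (lifting to $\tilde e_i'$); then by construction $A,B$ are non-negative integer matrices with $A+B = T$, and $\tilde f_{\rm op}$ sends $\tilde e_j \mapsto$ a path covering $\tilde e_i$ with multiplicity $A_{ij}$ and $\tilde e_i'$ with multiplicity $B_{ij}$. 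Applying $\iota$ shows $\tilde e_j' \mapsto \tilde e_i'$ with multiplicity $A_{ij}$ and $\tilde e_i$ with multiplicity $B_{ij}$. This gives precisely $\tilde f_{\rm op} = \begin{bmatrix} A & B \\ B & A\end{bmatrix}$, and since $\tilde f_{\rm or} = \iota\cdot\tilde f_{\rm op}$ by left-multiplying by the swap matrix we get $\tilde f_{\rm or} = \begin{bmatrix} B & A \\ A & B\end{bmatrix}$. I should also check that these are genuinely the two lifts of $f$ (any lift of $f$ differs from a fixed one by a deck transformation, so there are exactly two, and one is orientation-preserving and the other orientation-reversing because $\iota$ reverses orientation).

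For the characteristic polynomials I would use the standard eigenvalue-symmetry trick for such $2\times 2$ block matrices with commuting-looking structure: conjugating $\begin{bmatrix} A & B \\ B & A\end{bmatrix}$ by $\frac{1}{\sqrt2}\begin{bmatrix} I & I \\ I & -I\end{bmatrix}$ (or just performing the block row/column operations $R_1 \to R_1 + R_2$, $C_2 \to C_2 - C_1$, which is a unimodular change of basis over $\Z[\tfrac12]$ — or better, avoid fractions and argue at the level of the block determinant identity) transforms it into the block-triangular form $\begin{bmatrix} A+B & * \\ 0 & A-B\end{bmatrix}$, whence $\det(xI - \tilde f_{\rm op}) = \det(xI - (A+B))\det(xI-(A-B)) = \chi(f_*)\det(xI - (A-B))$. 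Evaluating — or rather, observing that $\det(xI-(A-B))$ is itself the characteristic polynomial and the stated formula $\chi(f_*)\det(A-B)$ is the claim with a sign/normalization I would reconcile by noting $\det(A-B)$ here is being used as shorthand for $\det(xI - (A-B))$ in the paper's notation — gives the result; the orientation-reversing case is identical with the roles of $A$ and $B$ swapped, yielding $\chi(f_*)\det(B-A)$. The main obstacle I anticipate is purely bookkeeping: making the corner-parity/lift correspondence airtight, i.e., verifying that the assignment of each occurrence of $e_i$ in $f(e_j)$ to either the $A$-block or the $B$-block is well-defined and compatible with the chosen global orientation of $\tilde\tau$, using Lemma~\ref{prop333} (cyclic-order preservation at non-evanescent vertices) and the definition of $\theta$ to ensure consistency around the infinitesimal polygons.
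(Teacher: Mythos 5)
Your proposal is correct and follows essentially the same route as the paper: fix an orientation of $\tilde\tau$, choose lifts $\tilde e_i$ of each edge so that $V(\tilde G)\cong\R^{2n}$ has basis $\{\tilde e_i,\tilde e_i'\}$ with $\iota$ acting by the block swap, read off the block form $\begin{bmatrix}A&B\\ B&A\end{bmatrix}$ from how each occurrence of $e_i$ in $f(e_j)$ lifts (the paper pins this down by fixing the first letter of $\tilde f_{\rm op}(\tilde e)$ and extending by connectivity, which is equivalent to your accumulated corner-parity bookkeeping), and then apply the block row/column reduction giving $\det(xI-\tilde f_{\rm op})=\det\bigl(xI-(A+B)\bigr)\det\bigl(xI-(A-B)\bigr)$. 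You also correctly flag that the paper's $\det(A-B)$ must be read as shorthand for $\det\bigl(xI-(A-B)\bigr)$ so that degrees match.
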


The above proposition confirms that pA maps, $F: S\to S$; $\tilde F_{\rm op}:\tilde S \to \tilde S$; and $\tilde F_{\rm or}: \tilde S \to \tilde S$, all have the same dilatation.

\begin{proof}
Even though the train track $\tau$ is non-orientable, we assign an orientation to each edge of $G$. 
Let $e_1, \cdots, e_n$ be the oriented edges of $G$. 
We denote the lifts of $e_k \subset G$ by $\tilde e_k, \tilde e_k' \subset \tilde G$.
Since $\tilde \tau$ is orientable, we choose an orientation, which induces orientations of $\tilde e_k, \tilde e_k'$. 
Denote the orientation cover by $p:\tilde G \to G$. 
Proposition~\ref{prop: orientation cover is orientable}-(1) implies that there are two choices:
$p(\tilde e_k)=e_k$ or $p(\tilde e_k)=\overline e_k$, where $\overline e_k$ is the edge $e_k \subset G$ with reversed orientation.
We choose to assume that $p(\tilde e_k)=e_k$, which implies that $p(\tilde e_k')=\overline e_k$.

We define an orientation preserving lift $\tilde f_{\rm op}: \tilde G \to \tilde G$ in the following way.
For an edge $e \subset G$, let $f(e)_{\rm head}$ (resp. $f(e)_{\rm tail}$) denote the first (resp. last) letter of the word $f(e)$. 
For each twin edges $\tilde e, \tilde e' \subset \tilde G$, we choose 
\begin{equation}\label{choice of head}
\tilde f_{\rm op} (\tilde e)_{\rm head}
:= 
\widetilde{f(e)_{\rm head}}, 
\qquad 
\tilde f_{\rm op} (\tilde e')_{\rm head}
:= 
\widetilde{f(e)_{\rm tail}}'.
\end{equation}
Next, we define the word $\tilde f_{\rm op} (\tilde e)$ to be the word $f(e)$ with each letter $e_i$ in $f(e)$ replaced by $\tilde e_i$ or $\tilde e_i'$ so that the resulting word corresponds to a connected edge-path in $\tilde G$. 
Due to the choice (\ref{choice of head}), the choice between $\tilde e_i$ and $\tilde e_i'$ is uniquely determined.
The word $\tilde f_{\rm op} (\tilde e')$ is given by the word $\tilde f_{\rm op} (\tilde e)$ {\em read from the right to left}, then replace $\tilde e_i$ by $\tilde e_i'$ and $\tilde e_i'$ by $\tilde e_i$.

We define an orientation reversing train track map by $\tilde f_{\rm or} := \iota \cdot \tilde f_{\rm op}$.

Let $\{ \zeta_1, \cdots, \zeta_n, \zeta_{1}', \cdots, \zeta_{n}' \}$ be the standard basis of $V(\tilde G) \simeq \R^{2n}$, where $\zeta_k, \zeta_k'$ correspond to $\tilde e_k, \tilde e_k' \subset \tilde G$ respectively.
From the constructions of $\tilde f_{\rm op}$ and $\tilde f_{\rm or}$, with respect to this basis, their transition matrices are of the form 
$\scriptsize
\begin{bmatrix}
A & B \\
B & A
\end{bmatrix}$
and 
$
\scriptsize
\begin{bmatrix}
B & A \\
A & B
\end{bmatrix}
$
respectively, for some non-negative $n\times n$ matrices $A$ and $B$ satisfying $A + B = T$. 
The formulae on characteristic polynomials follow from basic row and column reductions.
\end{proof}

In Example~\ref{ex:penner} there is a sketch of the orientation cover of 
a non-orientable train track which has no odd vertices. 
Also one can see explicit computations of $\tilde f_{\rm op}$ and $\tilde f_{\rm or}$ and matrices $A, B$.

We are finally in a position to understand the topological meaning of $W(G,f)$:

\begin{theorem}\label{thm:involution}
Assume that $\tau$ is non-orientable,   
Let $\iota: \tilde S\to \tilde S$ be the involution of the orientation cover.  Let $E^+$ and $E^-$ be the eigenspaces of $\iota_\ast: H_1(\tilde S; \RR)\to H_1(\tilde S; \RR)$ corresponding to the eigenvalues $1$ and $-1$, so that $H_1(\tilde S; \RR) \cong E^+ \oplus E^-.$  Then $E^+  \cong  H_1(S; \RR)$ and  $E^-  \cong  W(G,f)$.
\end{theorem}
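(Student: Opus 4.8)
\medskip
\noindent\textbf{Proof plan.} The strategy is to push the whole statement up to the orientation cover and turn it into linear algebra on the cycle space of $\tilde G$. Since $\tilde\tau$ is orientable (Proposition~\ref{prop: orientation cover is orientable}(1)), applying the orientable case of Lemma~\ref{lem:dimW(G,f)} to the pair $(\tilde G,\tilde f_{\rm op})$ identifies the subspace of $V(\tilde G)$ cut out by the switch conditions with $Z_1(\tilde G;\RR)=H_1(\tilde G;\RR)\cong H_1(\tilde S;\RR)$, and all these identifications are equivariant for the deck involution $\iota$, being induced by the single topological involution of $\tilde S$. So it suffices to prove that, writing $E^{\pm}\subseteq Z_1(\tilde G;\RR)$ for the $(\pm 1)$-eigenspaces of $\iota_\ast$, one has $E^{+}\cong H_1(G;\RR)\cong H_1(S;\RR)$ and $E^{-}\cong W(G,f)$. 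Fix an orientation on each edge $e_k$ of $G$ and a consistent orientation of $\tilde\tau$, label the two lifts of $e_k$ as $\tilde e_k,\tilde e_k'$ so that $p(\tilde e_k)=e_k$ and $p(\tilde e_k')=\overline{e_k}$, and let $\zeta_1,\dots,\zeta_n,\zeta_1',\dots,\zeta_n'$ be the resulting basis of $V(\tilde G)=C_1(\tilde G;\RR)$. The one delicate point enters here: because $\iota$ \emph{reverses} the consistent orientation of $\tilde\tau$, we get $\iota_\ast\zeta_k=-\zeta_k'$, so the $(\pm 1)$-eigenspaces of $\iota_\ast$ on $V(\tilde G)$ are $V^{+}=\mathrm{span}\{\zeta_k-\zeta_k'\}$ and $V^{-}=\mathrm{span}\{\zeta_k+\zeta_k'\}$, and $E^{\pm}=V^{\pm}\cap Z_1(\tilde G;\RR)$.

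For the $E^{+}$ factor I would invoke the chain-level transfer $p^{!}\colon C_\ast(G;\RR)\to C_\ast(\tilde G;\RR)$ for the branched double cover $p$ (an edge going to the sum of its two oriented lifts, $p^{!}e_k=\zeta_k-\zeta_k'$, with the standard ramification convention at the odd vertices), which is an injective chain map whose image on $C_1$ is $V^{+}$. Since $p^{!}\partial=\partial p^{!}$ and $p^{!}$ is injective on $C_0$, a chain $p^{!}\eta$ is a cycle iff $\eta$ is, so $E^{+}=p^{!}\big(Z_1(G;\RR)\big)\cong Z_1(G;\RR)=H_1(G;\RR)\cong H_1(S;\RR)$, the last step using $G\simeq S$. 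This is nothing but the classical fact that the $\iota$-invariants of the homology of a branched double cover recover the homology of the base, and one could simply cite that.

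For the $E^{-}$ factor I would use the pullback of transverse measures: the linear injection $q\colon V(G)\to V(\tilde G)$ that assigns the weight $\eta(e_k)$ to \emph{both} lifts, so $q(\zeta_k)=\zeta_k+\zeta_k'$ and $\mathrm{im}\,q=V^{-}$. Two checks remain. (i) $q\big(W(G,f)\big)\subseteq Z_1(\tilde G;\RR)$: over a lift of a non-odd vertex of $G$ the switch condition for $q(\eta)$ is, verbatim, the alternating gate-sum condition of Lemma~\ref{switchlem} for $\eta$ — here one takes the local signs of Definition~\ref{def of delta} downstairs to be pulled back from the genuine in/out structure of $\tilde\tau$ upstairs, and uses that Lemma~\ref{switchlem} is insensitive to a global sign flip — while over the unrolling of an odd $k$-gon the alternating sum of the repeated weights is $(1+(-1)^k)\sum_i(\pm 1)\eta(\cdot)=0$ automatically since $k$ is odd. (ii) Conversely, any $\xi\in E^{-}$ is of the form $\sum_k a_k(\zeta_k+\zeta_k')=q(\eta)$ with $\eta(e_k):=a_k$, and the switch conditions satisfied by $\xi$ at the lifts of the non-odd vertices force $\eta\in W(G,f)$ by Lemma~\ref{switchlem}. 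Hence $q$ restricts to an isomorphism $W(G,f)\xrightarrow{\ \sim\ }E^{-}$. As a bonus, $q$ intertwines $f_\ast|_{W(G,f)}$ with the common restriction of $(\tilde f_{\rm op})_\ast$ and $(\tilde f_{\rm or})_\ast$ to $E^{-}$ — these agree on $V^{-}$, both sending $\zeta_k+\zeta_k'$ to $\sum_i(A_{ik}+B_{ik})(\zeta_i+\zeta_i')=\sum_i T_{ik}(\zeta_i+\zeta_i')$ — which is precisely the compatibility needed for the homology polynomial in Corollary~\ref{cor:homology polynomial is invariant}.

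I expect the only real obstacle to be bookkeeping rather than conceptual: pinning down the sign $\iota_\ast\zeta_k=-\zeta_k'$ forced by the orientation-reversal of $\tilde\tau$ (this is exactly why the transverse measures land in the \emph{anti}-invariant summand, not the invariant one), and then keeping the local alternating-sign conventions of Lemma~\ref{switchlem} and Definition~\ref{def of delta} downstairs coherent with the genuine orientation of $\tilde\tau$ upstairs, including the mildly annoying odd-vertex unrollings. Once those conventions are fixed, the remainder is the linear-algebra splitting $V(\tilde G)=V^{+}\oplus V^{-}$ together with Lemmas~\ref{switchlem} and \ref{lem:dimW(G,f)}.
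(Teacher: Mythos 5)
Your proof is correct, and it uses the same two central maps as the paper: the pullback $q(\zeta_k)=\zeta_k+\zeta_k'$ (the paper calls it $\phi$) for the anti-invariant summand, and a transfer to identify the invariant summand with $H_1(S;\R)$, both hinging on the sign $\iota_\ast\zeta_k=-\zeta_k'$ forced by Proposition~\ref{prop: orientation cover is orientable}(1). Where you genuinely differ is in closing the argument. The paper establishes only the one-sided inclusions $H_1(S;\R)\subseteq E^+$ and $\phi(W(G,f))\subseteq E^-$ (with injectivity of $\phi|_{W(G,f)}$ read off the explicit basis of \S\ref{subsec:basis for W(G,f)}), and then upgrades both to equalities by the Euler-characteristic count $\dim H_1(\tilde S;\R)=\dim H_1(S;\R)+\dim W(G,f)$ from Lemma~\ref{lem:dimW(G,f)}. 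You instead split $V(\tilde G)=V^+\oplus V^-$, note $E^{\pm}=V^{\pm}\cap Z_1(\tilde G;\R)$, and prove the equalities directly: $E^+=p^!(Z_1(G;\R))$ from the chain-map property of $p^!$, and $E^-=q(W(G,f))$ from the switch-condition analysis at lifts of non-odd and odd vertices. This is more constructive and avoids the dimension count, but you pay for it by having to verify that $p^!$ really intertwines $\partial$ at the branch points: the phrase ``standard ramification convention'' is doing real work, since $\partial p^!e=\partial(\tilde e-\tilde e')$ produces $-2\tilde v$ at an odd vertex $v$, forcing $p^!(v)=2\tilde v$ rather than $\tilde v$; with that spelled out, the transfer step is airtight and the rest is correct as written. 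One small caveat on your closing ``bonus'': the identity you compute shows $(\tilde f_{\rm op})_\ast$ and $(\tilde f_{\rm or})_\ast$ agree on $V^-$ as \emph{transition-matrix} actions, but their actions on $C_1(\tilde G;\R)$ (hence on $E^-\subset H_1(\tilde S;\R)$) differ by a sign, since $\tilde f_{\rm or}=\iota\circ\tilde f_{\rm op}$ and $\iota_\ast=-\mathrm{id}$ on $E^-$. This does not affect the theorem, but it is worth flagging before you lean on that equality in the proof of Corollary~\ref{cor:homology polynomial is invariant}.
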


\begin{proof}
Fix an orientation of $\tilde\tau$ once and for all.  This determines an orientation of $\tilde G$.  
Since $\iota$ is an involution, the only possible eigenvalues are $\pm 1$, and $\iota_\ast$ is diagonalizable.

For a homology class $\xi\in H_1(S; \RR)$, let $\tilde \xi\in H_1(\tilde S,
\RR)$ denote its lift to the orientation cover. Since $p\cdot\iota=p$, we have $\iota_\ast\tilde\xi = \tilde\xi$. Thus $H_1(S, \RR)\subseteq E^+.$

Each edge $e\subset G$ has two lifts $\tilde e$ and $\tilde e' \subset \tilde G$.  
Let $\zeta_e$ be the basis element of $V(G)$ corresponding to the (unoriented) edge $e$, and let $\zeta_{\tilde e}$ be the basis element
of $C_1(\tilde G; \R) \cong C_1(\tilde S; \R)$ corresponding to the (oriented) edge $\tilde e$.
Define a homomorphism: $\phi: V(G)\to C_1(\tilde S; \RR)$ by $\zeta_e \mapsto \zeta_{\tilde e}+\zeta_{\tilde e'}.$ 
Recall that a basis element $\eta_e$ of $W(G,f)$, introduced in 
$\S$~\ref{sssec:basis2} and $\S$~\ref{sssec:basis3}, has a corresponding arc or loop $L_e \subset G$ where $\eta_e$ assigns weight of $\pm 1$ satisfying the alternating sum condition. 
Different edges $e_1, e_2$ correspond to distinct $L_{e_1}$ and $L_{e_2}$. 
Moreover, when $L_e$ is an arc, its end points are odd vertices which are branch points of the orientation cover, so the lift of $L_e$ is a closed curve in $\tilde S$.  
Hence, the restriction of $\phi$ to $W(G, f)$ is injective. 
Assume $\eta_e = \sum_i \eta_e( e_i ) \zeta_{e_i}$.
By Proposition~\ref{prop: orientation cover is orientable}-(1), the involution takes $\iota: \tilde e \mapsto - \tilde e'$. We have:
$$\iota_\ast \phi(\eta_e) 
=
\iota_\ast (\sum_i \eta_e( e_i ) (\zeta_{\tilde{e_i}} + \zeta_{\tilde{e_i}'})) 
= \sum_i \eta_e( e_i ) (-\zeta_{\tilde{e_i}'} - \zeta_{\tilde{e_i}})
= - \phi(\eta_e),$$
i.e., $\phi(W(G,f))\subseteq E^-.$

Comparison of Euler characteristics along with Lemma~\ref{lem:dimW(G,f)} shows that 
$$
\dim H_1(\tilde S; \RR) =\dim H_1(S; \RR) + \dim W(G,f), 
$$
which implies $H_1(S; \RR) \cong E^+$ and $\phi(W(G,f)) \cong E^-$.
\end{proof}

In Lemma~\ref{eq:inclusion} we proved that $f_\ast (W(G,f)) \subseteq W(G,f).$ 
In fact, a stronger statement holds.

\begin{corollary}\label{cor:isomorphism of f_ast}
The restriction map $f_\ast |_{W(G,f)} : W(G,f) \to W(G,f)$ is an isomorphism.
\end{corollary}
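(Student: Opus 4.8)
The plan is to prove that $f_\ast|_{W(G,f)}$ is an isomorphism by showing it is injective, from which surjectivity follows since $W(G,f)$ is finite-dimensional. I would split into the orientable and non-orientable cases. In the orientable case, Lemma~\ref{lem:dimW(G,f)} identifies $W(G,f)$ with $H_1(S;\R)$ and $f_\ast|_{W(G,f)}$ with the induced map $F_\ast$ on first homology; since $F$ is a homeomorphism, $F_\ast$ is an isomorphism of $H_1(S;\R)$, so we are done. The real content is the non-orientable case.

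In the non-orientable case, I would invoke Theorem~\ref{thm:involution}, which gives $W(G,f)\cong E^-\subset H_1(\tilde S;\R)$, the $(-1)$-eigenspace of the deck involution $\iota_\ast$. The key point is that the orientation-reversing lift $\tilde F_{\rm or}:\tilde S\to\tilde S$ from Proposition~\ref{or-cover-matrix} is a homeomorphism, so $(\tilde F_{\rm or})_\ast$ is an isomorphism of $H_1(\tilde S;\R)$, and it is $\iota_\ast$-equivariant (because $\tilde F_{\rm or}=\iota\circ\tilde F_{\rm op}$ and the lifts commute with the deck group up to the $\Z/2$ action in the appropriate way), hence it preserves the eigenspace decomposition $H_1(\tilde S;\R)\cong E^+\oplus E^-$ and restricts to an isomorphism of $E^-$. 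Under the identification $E^-\cong W(G,f)$ of Theorem~\ref{thm:involution}, this restricted map is exactly $f_\ast|_{W(G,f)}$ (up to sign, which does not affect being an isomorphism), so $f_\ast|_{W(G,f)}$ is an isomorphism. A purely algebraic alternative, avoiding the topology, is to use Proposition~\ref{or-cover-matrix}: the matrix $\tilde f_{\rm or}=\begin{bmatrix}B&A\\A&B\end{bmatrix}$ represents a homeomorphism, so it is invertible over $\Z$; conjugating by $\begin{bmatrix}I&I\\I&-I\end{bmatrix}$ block-diagonalizes it as $\mathrm{diag}(A+B,\,B-A)=\mathrm{diag}(T,\,B-A)$, and since $T$ is the Perron--Frobenius transition matrix of a pA map it is invertible, forcing $B-A$ to be invertible as well; one then checks $B-A$ (in a suitable basis) represents $f_\ast|_{W(G,f)}$.

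The main obstacle will be bookkeeping the identifications carefully: verifying that under the isomorphism $\phi:W(G,f)\xrightarrow{\sim}E^-$ constructed in the proof of Theorem~\ref{thm:involution}, the map $f_\ast|_{W(G,f)}$ really does correspond to $(\tilde F_{\rm or})_\ast|_{E^-}$ (or to the block $B-A$), including getting the signs and the op/or distinction right. This is the step where one must be careful that the lift $\tilde f_{\rm op}$ acts on the span of $\zeta_{\tilde e}+\zeta_{\tilde e'}$ (the image of $\phi$, before restricting) compatibly with $f_\ast$ on $V(G)$, and that the further restriction to the switch-condition subspace is respected. Once the dictionary between the combinatorial object $f_\ast|_{W(G,f)}$ and the topological/linear-algebraic object is nailed down, invertibility is immediate from the fact that $F$ (equivalently $\tilde F_{\rm or}$) is a homeomorphism, or from invertibility of the Perron--Frobenius matrix $T$.
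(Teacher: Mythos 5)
Your main, topological argument is essentially the same as the paper's: identify $W(G,f)$ with $E^-\subset H_1(\tilde S;\R)$ via Theorem~\ref{thm:involution}, observe that a lift $\tilde F$ of $F$ is a homeomorphism of $\tilde S$ so $\tilde F_\ast$ is an isomorphism of $H_1(\tilde S;\R)$, check that $\tilde F_\ast$ commutes with $\iota_\ast$ (hence preserves $E^-$), and conclude by finite-dimensionality. (The paper works with an arbitrary lift rather than insisting on $\tilde F_{\rm or}$; this is immaterial.) The orientable case is handled identically.

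The ``purely algebraic alternative,'' however, does not work as written, and you should drop it or repair it substantially.
First, the transition matrix $\begin{bmatrix}B&A\\A&B\end{bmatrix}$ of the graph map $\tilde f_{\rm or}:\tilde G\to\tilde G$ does not ``represent a homeomorphism'': a train track map is a homotopy equivalence of graphs, not a homeomorphism, and its transition matrix is not a priori invertible. Similarly, Perron--Frobenius theory gives a dominant eigenvalue for $T$, not invertibility; in fact the cleanest route to $\det T=\pm1$ is precisely through the decomposition $V(G)\cong W(G,f)\oplus\im\delta$ together with \emph{this very corollary} (plus the fact that $f_\ast|_{\im\delta}$ is a signed permutation matrix), so invoking invertibility of $T$ here is circular.
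Second, the identification of $B-A$ with $f_\ast|_{W(G,f)}$ is incorrect. The map $\phi:\zeta_e\mapsto\zeta_{\tilde e}+\zeta_{\tilde e'}$ sends $W(G,f)$ into the $(-1)$-eigenspace $\{(x,x)\}$ of $\iota_\ast$ on $C_1(\tilde G;\R)$, and on that subspace both lifts act by
$\begin{bmatrix}A&B\\B&A\end{bmatrix}(x,x)^{\,\prime}=((A+B)x,(A+B)x)^{\,\prime}$, i.e.\ by $T=A+B$, not by $B-A$. The block $B-A$ (resp.\ $A-B$) is the action of $\tilde f_{\rm or}$ (resp.\ $\tilde f_{\rm op}$) on the $(+1)$-eigenspace $\{(x,-x)\}$, which corresponds to $E^+\cong H_1(S;\R)$, not to $W(G,f)$. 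So even if you established invertibility of the $2n\times2n$ matrix by other means, the block-diagonalization would give you invertibility of $T$ and of $B-A$, neither of which is, by itself, the map $f_\ast|_{W(G,f)}$ you are after; you would still need to pass to the subspace of $\{(x,x)\}$ cut out by the switch conditions.
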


\begin{proof}
Regardless of the orientability of $\tau$, the fact that $F: S \to S$ is a homeomorphism implies that the induced map $F_\ast : H_1(S; \R) \to H_1(S; \R)$ is an isomorphism.

Suppose that $\tau$ is orientable.
The isomorphism $W(G, f) \cong H_1(S; \R)$ in Lemma~\ref{lem:dimW(G,f)} allows us to identify $f_\ast |_{W(G,f)}$ with $F_\ast$, which is an isomorphism.

Suppose that $\tau$ is non-orientable.
Let $\{ \eta_e \}_{e \in E}$ be a basis of $W(G,f)$ constructed as in $\S$\ref{sssec:basis2} and $\S$~\ref{sssec:basis3}. 
Since the map $\phi: W(G,f) \to E^-$ in the proof of Theorem~\ref{thm:involution} is an isomorphism, the set $\{ \phi( \eta_e ) \}_{e\in E}$ is a basis of $E^-$.
Let $\tilde F: \tilde S \to \tilde S$ be a lift of $F: S \to S$.
It induces an isomorphism 
$\tilde F_\ast : H_1(\tilde S; \R) \to H_1(\tilde S; \R)$ and a train track map $\tilde f : \tilde G \to \tilde G$. 
Since $\tilde S$ deformation retracts to $\tilde G$, we can identify $\tilde F_\ast$ with $\tilde f_\ast : H_1(\tilde G; \R) \to H_1(\tilde G; \R)$. 
Using the same notation as in the proof of Theorem~\ref{thm:involution}, we have
\begin{eqnarray*}
\iota_\ast \tilde f_\ast (\phi (\eta_e)) 
& = &
\iota_\ast \tilde f_\ast 
\left( 
\sum_i \eta_e( e_i ) (\zeta_{\tilde{e_i}} + \zeta_{\tilde{e_i}'}) 
\right) \\
& = &
\iota_\ast \sum_i \eta_e( e_i ) (
\zeta_{\tilde f_\ast (\tilde{e_i})} + 
\zeta_{\tilde f_\ast (\tilde{e_i}')}
) \\
& = &
\sum_i \eta_e( e_i ) (
-\zeta_{\tilde f_\ast (\tilde{e_i}')} 
-\zeta_{\tilde f_\ast (\tilde{e_i})}
) 
= -\tilde f_\ast (\phi (\eta_e)). 
\end{eqnarray*}
Hence
$\tilde F_\ast (E^-) = \tilde f_\ast (E^-) \subset E^-$. 
Since $H_1(\tilde S; \R)$ is finite dimensional and $\tilde F_\ast$ is an isomorphism, we obtain that $\tilde F_\ast|_{E^-}= f_\ast|_{W(G,f)}$ is an isomorphism.
\end{proof}

The topological invariance of $\chi(f_\ast|_{W(G,f)})$ was stated as a conjecture in an earlier draft. Reading that draft, Jeffrey Carlson pointed the authors to a connection they had missed, making our conjecture an immediate consequence of Theorem~\ref{thm:involution}.  We are grateful for his help.

\begin{corollary}\label{cor:homology polynomial is invariant}  
Let $h(x)$ be the characteristic polynomial for $f_\ast|_{W(G,f)}$. It is the homology polynomial. Then $h(x)$ is an invariant of the pA mapping class $[F]$.
\end{corollary}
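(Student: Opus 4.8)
The plan is to deduce topological invariance directly from the homological interpretation of $W(G,f)$ established in Lemma~\ref{lem:dimW(G,f)} and Theorem~\ref{thm:involution}, combined with the identification of $f_\ast|_{W(G,f)}$ with a restriction of the map on first homology of a canonically associated surface. First I would separate the two cases. When $\tau$ is orientable, Lemma~\ref{lem:dimW(G,f)} gives $W(G,f)\cong H_1(G;\R)\cong H_1(S;\R)$, and under this isomorphism $f_\ast|_{W(G,f)}$ is identified with $F_\ast: H_1(S;\R)\to H_1(S;\R)$ (this is the content of the argument in the proof of Corollary~\ref{cor:isomorphism of f_ast}). Since $F_\ast$ is determined up to conjugation by the class $[F]\in\mathrm{Mod}(S)$ — a homotopy of $F$ does not change the induced map on homology, and a change of identification of $S$ conjugates it — its characteristic polynomial $\chi(f_\ast|_{W(G,f)})$ depends only on $[F]$.

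Next I would handle the non-orientable case. Here Theorem~\ref{thm:involution} gives $W(G,f)\cong E^-$, the $(-1)$-eigenspace of the deck involution $\iota_\ast$ acting on $H_1(\tilde S;\R)$, where $\tilde S\to S$ is the orientation cover of $S$ associated to the non-orientable train track $\tau$. The key point I must address is that $\tilde S$, together with the lifted mapping class, is itself a topological invariant of $[F]$: the orientation cover is characterized (by the proof of Proposition~\ref{prop: orientation cover is orientable}) as the unique double cover branched over the non-orientable ``odd-vertex'' locus on which the invariant foliations become orientable, so it does not depend on the particular Bestvina--Handel representative $f:G\to G$; and $F$ lifts to $\tilde F$ on $\tilde S$. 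Under the isomorphism of Corollary~\ref{cor:isomorphism of f_ast}, the map $f_\ast|_{W(G,f)}$ is identified with $\tilde F_\ast|_{E^-}$, the restriction of $\tilde F_\ast: H_1(\tilde S;\R)\to H_1(\tilde S;\R)$ to the $\iota_\ast$-anti-invariant subspace. Again $\tilde F_\ast$ is well-defined up to conjugacy by $[F]$ (with $E^-$ being a $\tilde F_\ast$-invariant subspace, as shown in the proof of Corollary~\ref{cor:isomorphism of f_ast}, because $\tilde F$ commutes with $\iota$ up to the choice of lift), so $\chi(\tilde F_\ast|_{E^-})=\chi(f_\ast|_{W(G,f)})$ is an invariant of $[F]$.

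The main obstacle — and the reason Carlson's observation was needed — is precisely the intrinsic characterization of $\tilde S$ and the lifted map in the non-orientable case: a priori $W(G,f)$ is built from the combinatorial data $(G,f)$, and two different train-track representatives of the same $[F]$ could in principle yield inequivalent spaces. The resolution is that Theorem~\ref{thm:involution} reinterprets $W(G,f)$ entirely in terms of the homology of $\tilde S$ with its $\iota$-action, and $\tilde S$ is determined by $S$ together with the orientability defect of the invariant unstable foliation of $[F]$, which is a topological datum. So the whole argument is: invariance of $\chi(f_\ast|_{W(G,f)})$ $\Longleftarrow$ invariance of the pair $(\tilde S, \tilde F_\ast|_{E^-})$ up to conjugacy $\Longleftarrow$ Theorem~\ref{thm:involution} plus Corollary~\ref{cor:isomorphism of f_ast} plus the topological characterization of the orientation cover. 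I would write the proof as a short two-paragraph deduction along exactly these lines, citing the cited results rather than reproving anything.
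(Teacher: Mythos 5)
Your proposal follows essentially the same route as the paper's proof: both reduce invariance to the identification $\chi(f_\ast|_{W(G,f)})=\chi(\tilde F_\ast|_{E^-})$ supplied by Theorem~\ref{thm:involution}, and then appeal to the fact that the eigenspace $E^-$ (and hence the conjugacy class of $\tilde F_\ast|_{E^-}$) is canonically attached to $[F]$. The paper states this more tersely and does not spell out the orientable case separately, but the substance and the key step — passing to the orientation cover so that $W(G,f)$ acquires an intrinsic homological meaning — are the same; your elaboration of why $(\tilde S, \iota)$ is a topological invariant of $[F]$ is correct and fills in exactly the detail the paper leaves implicit.
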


\begin{proof}
Let $\tilde F: \tilde S \to \tilde S$ be a lift of the pA map 
$F:S\to S$.  
By Theorem~\ref{thm:involution} we have
$h(x)=\chi(f_\ast|_{W(G,f)}) = \chi(\tilde F_\ast|_{E^-})$.  
Since the eigenspace $E^-$ is an invariant of $[F]$, so is the polynomial $h(x)$.
\end{proof}

This concludes the proof of Part (1) of Theorem~\ref{thm:summarize}.


\section{Proof of Parts (2) and (3) of Theorem~\ref{thm:summarize}}\label{sec:2nd polynomial invariant}
Having established the meaning of $W(G,f)$ and the invariance of the homology polynomial, our next goal is to understand whether it is irreducible, and if not to understand its factors.  At the same time, we will investigate its symmetries.  

With those goals in mind, we show that there is a well-defined and $f_*$-invariant skew-symmetric form on the space $W(G,f)$.  See  Proposition~\ref{prop:properties of skew-sym. form}.   We define  the subspace $Z\subset W(G,f)$ to be the space of degeneracies of this skew-symmetric form.  We are able to interpret the the action of $f_\ast$ on $Z$ geometrically, as being a permutation of certain punctures on $S$.  
   In Theorem~\ref{thm:2nd decomposition} we will prove that the space $W(G,f)$ has a decomposition into summands that are invariant under the action of $f_\ast$, and that as a consequence the homology polynomial decomposes as a product of two polynomials, $p(x)$ and $s(x)$. We call them the {\it puncture} and {\it symplectic} polynomials.  Like the homology polynomial, both are invariants of $[F]$ in Mod$(S)$.  We also establish their symmetries in Theorem~\ref{thm:2nd decomposition}, and understand the precise meaning of the puncture polynomial.  The symplectic polynomial contains $\lambda$ as its largest real root.  When irreducible, it coincides with the minimum polynomial of $\lambda$, but in general it is not irreducible.   At this writing we do not understand when it is or is not reducible.

\subsection{Lifting the basis elements for $W(G,f)$ to $W(\tau)$}\label{subsec:transitional & terminal}
Our work begins with a brief diversion, to establish a technical result that will be needed in the sections that follow.
We have shown how to construct basis elements $\eta_{e_1},\dots,\eta_{e_l}$ for $W(G,f)$.  We now build on this construction to give an explicit way to lift  each $\eta_e \in W(G,f)$ to an element $\eta_e' \in W(\tau)$ in such a way  that $\pi_\ast(\eta_e')=\eta_e$, where $\pi_* : W(\tau) \to W(G,f)$ is the natural surjection. 
Although there are infinitely many lifts of any given basis element $\eta_e$, our construction of a specific $\eta_e'$ will be useful later.   The issues to be faced in lifting $\eta_e$ to $\eta_{e'}$ are the assignment of weights to the infinitesimal edges.

\begin{definition}
Suppose that $v \in G$ is a vertex with $k$ gates numbered $0, \cdots, k-1,$ counterclockwise.
For $i= 0, \cdots, k-1$, we define a {\em transitional} element $\sigma_i \in V(\tau)$ that assigns $1$ to the $i$-th infinitesimal edge for the vertex $v$, and $0$ to the remaining branches of $\tau$. 
In other words, in Figure~\ref{fig:vertex-types}, $x_i = 1$ and $x_j = 0$ for $j\neq i$. 

Suppose that $v \in G$ is an odd vertex with $k$ gates.
For $i= 0, \cdots, k-1$, 
we define  a {\em terminal} element $\omega_i \in V(\tau)$ which assigns $\pm \frac12$ to the incident infinitesimal edges for $v$ 
so that the $i$-th gate has weight $w_i = 1$ and $j$-th $(j \neq i)$ gate has weight $w_j = 0$, cf. Figure~\ref{fig:weightedpath}; and assigns $0$ for rest of the branches of $\tau$.
\begin{figure}[htpb!]
\begin{center}
\psfrag{h}{$\frac12$}
\psfrag{mh}{$-\frac12$}
\psfrag{o}{$1$}
\psfrag{mo}{$-1$}
\psfrag{z}{$0$}
\includegraphics[width=.7\textwidth]{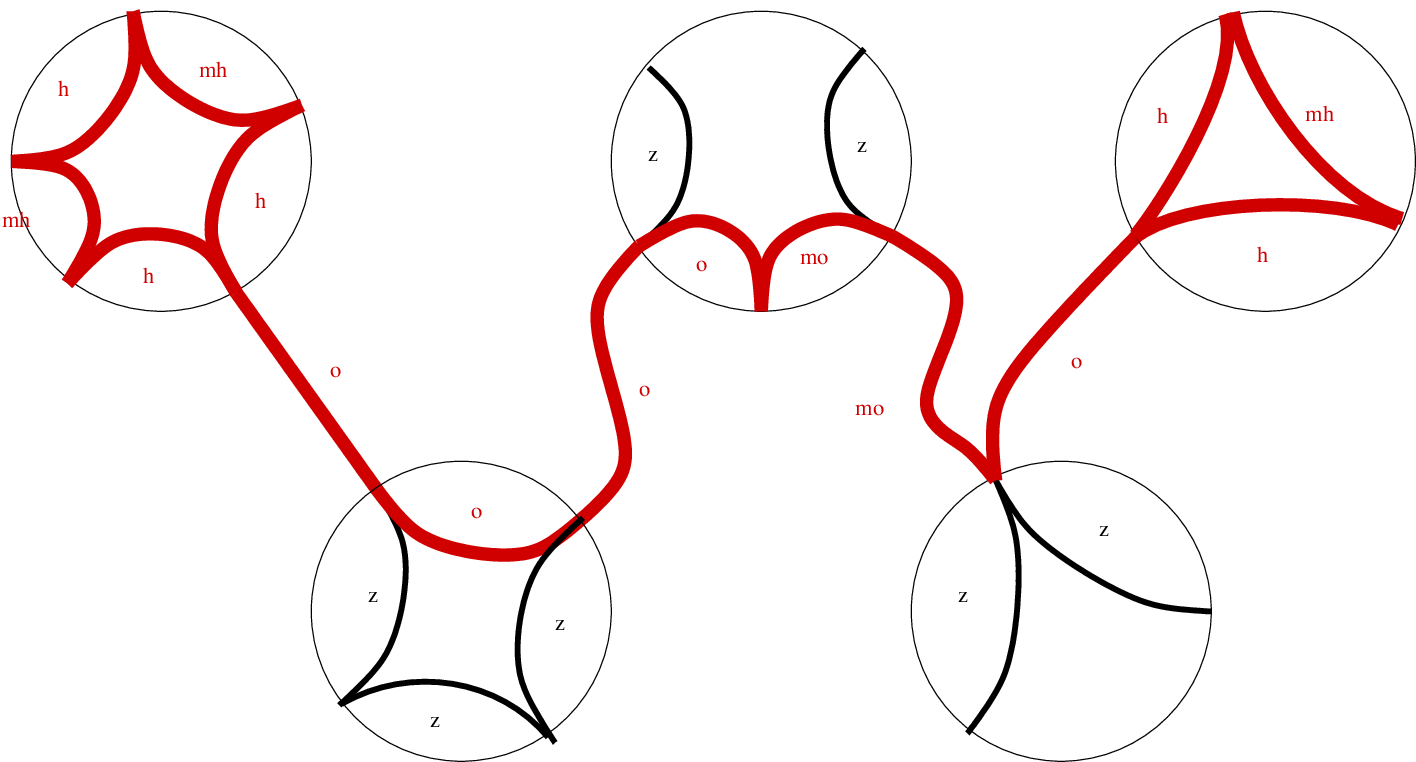}
\end{center}
\caption{Transitional and terminal elements.}
\label{fig:weightedpath}
\end{figure}
\end{definition}

For both the orientable and the non-orientable case, our basis element $\eta_e \in W(G,f)$ is a vector whose entries are $\pm 1$ or $0$. 
Recall that the edges whose weights are $\pm 1$ form a loop or an arc, denoted by $L_e$ in $\S$~\ref{sssec:basis1}, \ref{sssec:basis2} and \ref{sssec:basis3}.

At an even or a partial vertex of $L_e$, suppose $L_e$ goes through the $i$-th and $j$-th gates ($i \leq j$). 
To $\eta_e$ we add consecutive transitional elements $\sigma_i, \sigma_{i+1}, \cdots, \sigma_{j-1}$ with alternating signs so that the switch condition is satisfied (cf. the upper middle circle in Figure~\ref{fig:weightedpath}).
Repeat this procedure for all the non-odd vertices of $L_e$.
If $L_e$ is a loop, it yields an element $\eta_e'$ of $W(\tau)$.

When $L_e$ is an arc (i.e., $\tau$ is non-orientable with odd vertices), the two endpoints of $L_e$ are odd.
Suppose $L_e$ enters the $i$-th gate of an odd vertex.
After adding transitional elements as above, we further add terminal element $\omega_i$ or $-\omega_i$ so that the switch condition is satisfied at all the incident gates of the odd vertex. 
Proceed in this way for the other odd vertex as well, and we obtain an element $\eta_e'$ of $W(\tau)$.


\subsection{A skew-symmetric form on $W(G,f)$}\label{subsec:skew-symmetric form on W(G,f)}

In this section we define a skew-symmetric form $\langle \cdot , \cdot \rangle$ on $W(G,f)$. 
To get started, we slightly modify the skew-symmetric form on $W(\tau)$ introduced by Penner-Harer (p.182 of \cite{PH}):
For a branch $b \subset \tau$ and $\eta \in W(\tau)$, let $\eta(b)$ denote the weight that $\eta$ assigns to $b$.  
At a switch of valence $k$ ($k \geq 3$), label the branches $a, b_1, \cdots, b_{k-1}$ as in Figure~\ref{fig:switch}. 
The cyclic order of $a,  b_1, \cdots, b_{k-1}$ is determined by the orientation of the surface and the embedding of $\tau$ in $S$.
We define a skew-symmetric form:
$$
\langle \eta, \zeta \rangle_{W(\tau)} := 
\frac12 
\sum_{\substack{
\text{switches} \\
\text{in } \tau}
} \ 
\sum_{i < j}
\begin{vmatrix}
\eta(b_i) & \eta(b_j)\\
\zeta(b_i) & \zeta(b_j)
\end{vmatrix},
\quad 
\mbox{ for } \eta, \zeta \in W(\tau).
$$
Recall the surjective map $\pi : \tau \to G$ collapsing the infinitesimal (partial) polygons to vertices.

\begin{definition}
For $\eta, \zeta\in W(G,f)$ there exist $\eta', \zeta'\in W(\tau)$ so that $\pi_\ast(\eta') = \eta$ and $\pi_\ast(\zeta')=\zeta$.  We define a skew-symmetric form on $W(G,f)$ by:
$$\langle \eta, \zeta \rangle_{W(G,f)} := \langle \eta', \zeta' \rangle_{W(\tau)}$$ 
\end{definition}

\begin{proposition}  \label{prop:properties of skew-sym. form}
The skew-symmetric form  $\langle \cdot , \cdot \rangle_{W(G,f)}$ has the following properties:
\begin{enumerate}
\item It is well-defined.
\item When $\tau$ is orientable, $\langle \eta, \zeta \rangle_{W(G,f)} $ is the homology intersection number of $1$-cycles associated to $\eta$ and $\zeta$.  
\item  When $\tau$ is non-orientable, recall that $E^{\pm}$ are the eigenspaces of the deck transformation $\iota : \tilde S \to \tilde S$ for the orientation cover studied in Theorem~\ref{thm:involution}.
Since $p: \tilde S \to S$ is a {\em double} branched cover, we have the following results, to be compared with p.187 of \cite{PH}:
\begin{enumerate}
\item  The restriction of the intersection form on $H_1(\tilde S; \R)$ to $E^+$ is twice the intersection form on $H_1(S; \RR)$.
\item  The restriction of the intersection form on $H_1(\tilde S; \R)$ to $E^-$ is twice the skew-symmetric form $\langle \cdot, \cdot \rangle_{W(G,f)}$.
\end{enumerate}
\item For all $\eta, \zeta\in W(G,f)$, we
have $\langle f_\ast\eta, f_\ast\zeta \rangle_{W(G,f)} 
= \langle \eta, \zeta \rangle_{W(G,f)}.$
\end{enumerate}
\end{proposition}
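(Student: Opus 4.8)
\textbf{Proof proposal for Proposition~\ref{prop:properties of skew-sym. form}.}
The plan is to handle the four items in an order that exploits the machinery already built. I would begin with well-definedness (1), since the remaining parts quietly depend on it. Two lifts $\eta', \eta''\in W(\tau)$ of the same $\eta\in W(G,f)$ differ by an element supported entirely on the infinitesimal edges (the real-edge weights are determined by $\eta$). Such a difference lies in the kernel of $\pi_\ast$, which is spanned by the transitional elements $\sigma_i$ at non-odd vertices (and at odd vertices the switch conditions force the infinitesimal weights, so there is no freedom there). Hence it suffices to check that $\langle \sigma_i, \zeta'\rangle_{W(\tau)}=0$ whenever $\zeta'$ is a lift of some $\zeta\in W(G,f)$. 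This is a direct local computation at the single vertex $v$ supporting $\sigma_i$: the only switches contributing are the two switches of $\tau$ at the ends of the $i$-th infinitesimal edge, and writing out the two $2\times2$ determinants and using the switch conditions on $\zeta'$ at those switches, the contributions cancel in pairs. I expect this to be the one genuinely computational step, but it is short.

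Next I would do (2). When $\tau$ is orientable, Lemma~\ref{lem:dimW(G,f)} identifies $W(G,f)$ with $Z_1(G;\R)\cong H_1(S;\R)$, and the basis elements $\eta_e$ correspond to the loops $L_e$. I would check on basis pairs $\eta_{e}, \eta_{e'}$ that $\langle\eta_e,\eta_{e'}\rangle_{W(G,f)}$ equals the algebraic intersection number $L_e\cdot L_{e'}$. Each intersection of $L_e$ with $L_{e'}$ occurs at a vertex of $G$ where both loops pass through; unwinding the definition of $\langle\cdot,\cdot\rangle_{W(\tau)}$ at the switches of the infinitesimal polygon at that vertex, the sum of $2\times 2$ determinants computes exactly the local contribution $\pm1$ to the intersection number, with the sign governed by the cyclic order of gates (which is the orientation data the intersection form uses). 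Bilinearity then extends this to all of $W(G,f)$.

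For (3) I would use Theorem~\ref{thm:involution}, which gives $\phi: W(G,f)\xrightarrow{\ \sim\ } E^-\subset H_1(\tilde S;\R)$ via $\zeta_e\mapsto \zeta_{\tilde e}+\zeta_{\tilde e'}$, and the analogous statement $H_1(S;\R)\cong E^+$. Part (3a): for lifted classes $\tilde\xi,\tilde\xi'\in E^+$ coming from $\xi,\xi'\in H_1(S;\R)$, the double branched cover $p$ satisfies $\tilde\xi\cdot\tilde\xi' = 2(\xi\cdot\xi')$ since away from the finitely many branch points $p$ is a genuine 2-sheeted cover and intersection points lift in pairs (the branch points contribute nothing to a transverse intersection of 1-cycles). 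Part (3b): I would compute the intersection pairing of $\phi(\eta_e)$ and $\phi(\eta_{e'})$ in $\tilde S$ by lifting the loops/arcs $L_e$; each transverse crossing of $L_e, L_{e'}$ in $S$ at a \emph{non-odd} vertex lifts to two crossings upstairs, while at an \emph{odd} vertex (a branch point) one must use the explicit unrolling of Figure~\ref{fig:unroll} together with the terminal elements $\omega_i$ to see that the local contribution upstairs is twice the local contribution of $\langle\cdot,\cdot\rangle_{W(G,f)}$ downstairs (which is computed from $W(\tau)$ at the infinitesimal polygon). Summing gives the factor $2$. The orientation-reversing property $\iota_\ast|_{E^-}=-\mathrm{id}$ is consistent with everything since the intersection form is $\iota$-invariant.

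Finally (4): in the orientable case this is immediate from (2), because $f_\ast|_{W(G,f)}$ is identified with $F_\ast$ on $H_1(S;\R)$ (Corollary~\ref{cor:isomorphism of f_ast}) and $F$ is an orientation-preserving homeomorphism, hence preserves the intersection form. In the non-orientable case, (3b) identifies $\langle\cdot,\cdot\rangle_{W(G,f)}$ (up to the factor $2$) with the intersection form on $E^-$, and under the identification $f_\ast|_{W(G,f)} = \tilde F_\ast|_{E^-}$ from Corollary~\ref{cor:isomorphism of f_ast}; since $\tilde F$ is a homeomorphism of $\tilde S$ it preserves the intersection form on $H_1(\tilde S;\R)$, and $E^-$ is $\tilde F_\ast$-invariant, so the restriction is preserved. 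The main obstacle is the careful bookkeeping at odd vertices in part (3b)—tracking signs through the unrolling and the half-integer weights of the terminal elements—but the underlying principle (transverse intersections lift two-to-one under a double cover) makes the outcome forced.
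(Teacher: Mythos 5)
Your parts (2)--(4) are in the spirit of the paper: the paper simply cites \cite[Lemma~3.2.2]{PH} for (2), observes that (3) follows from Theorem~\ref{thm:involution}, and proves (4) exactly as you do, via the identification of $f_\ast|_{W(G,f)}$ with $F_\ast$ (resp.\ $\tilde F_\ast|_{E^-}$) and the fact that a homeomorphism preserves the intersection form.

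However, your argument for (1) has a genuine gap. You assert that $\ker\pi_\ast$ is ``spanned by the transitional elements $\sigma_i$ at non-odd vertices'' and reduce to showing $\langle\sigma_i,\zeta'\rangle_{W(\tau)}=0$. Both halves of this are false. First, an individual $\sigma_i$ does not satisfy the switch conditions, so it is not an element of $W(\tau)$, hence not of $\ker\pi_\ast\subset W(\tau)$; the actual kernel is spanned by the \emph{alternating} combinations $\sum_{j=0}^{k-1}(-1)^j\sigma_j$, one for each \emph{even} vertex. (Partial vertices contribute nothing: with all gate weights equal to zero the switch conditions $x_0=w_0=0$, $x_0+x_1=w_1=0$, $\dots$, $x_{k-2}=w_{k-1}=0$ force every infinitesimal weight to vanish, so there is no freedom there either; only even vertices give a one-parameter family.) Second, even formally, $\langle\sigma_i,\zeta'\rangle_{W(\tau)}$ is not zero in general --- the paper's own Proposition~\ref{localcomputation} gives $\langle\sigma_0,\sigma_1\rangle=-\tfrac12$ --- so the hoped-for ``contributions cancel in pairs'' at a single infinitesimal edge does not occur. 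What actually happens (and what the paper computes) is a cancellation \emph{around the whole even polygon}: with $x_i=(-1)^ix_0$ the sum
\[
\frac12\sum_{i=0}^{k-1}\begin{vmatrix} x_i & x_{i-1}\\ y_i & y_{i-1}\end{vmatrix}
=\frac{x_0}{2}\sum_{i=0}^{k-1}(-1)^i(y_i+y_{i-1})
\]
telescopes to zero. You would need to replace your local-at-one-edge argument with this global-around-the-polygon computation to close the gap.
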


\begin{proof}
(1) 
It suffices to show that for any $\eta' \in \ker \pi_\ast \subset W(\tau)$ and $\zeta' \in W(\tau)$, the product $\langle \eta' , \zeta' \rangle_{W(\tau)}=0$. 
Since $\pi_\ast(\eta') = \vec{0}$, $\eta'$ assigns $0$ to any real edge $b \subset \tau$, it follows that the weight of $\eta'$ at any gate is $0$. 
If a vertex $v\in G$ is odd or partial,  $\eta'$ assigns $0$ to any infinitesimal edges associated to the vertex $v$. 
Therefore, $v$ does not contribute to $\langle \eta' , \zeta' \rangle_{W(\tau)}$.
If a vertex $v\in G$ is even with $k$ gates, the weights $x_0, \cdots, x_{k-1}$ that $\eta'$ assigns to the infinitesimal edges for $v$ form an alternating sequence: $x_i = (-1)^i x_0$.
The contribution of the even vertex $v$ to $\langle \eta' , \zeta' \rangle_{W(\tau)}$ is:
$$
\frac{1}{2} \sum_{i=0}^{k-1}
\begin{vmatrix}
x_i & x_{i-1}\\
y_i & y_{i-1}\\
\end{vmatrix}
= \frac{x_0}2 \sum_{i=0}^{k-1}
\begin{vmatrix}
(-1)^i & (-1)^{i-1}\\
y_i & y_{i-1}\\
\end{vmatrix}
= \frac{x_0}2 \sum_{i=0}^{k-1}
(-1)^i(y_i+y_{i-1})
=0,
$$
where $y_i$ are the weights assigned by $\zeta'$ and indices are modulo $k$.

(2)
Assertion (2) is established in Lemma 3.2.2 of \cite{PH}.

(3)  
Assertion (3) follows directly from Theorem~\ref{thm:involution}.

(4)
If $\tau$ is orientable, then we identify $W(G,f)\cong H_1(S; \R)$. 
Since $F: S\to S$ is homeomorphism, the homology intersection number is preserved under $f_\ast : H_1(S ; \R) \to H_1(S ; \R)$ and the assertion follows. 

If $\tau$ is non-orientable, then, passing to the orientation cover, $\tilde F_\ast : H_1(\tilde S ; \R) \to H_1(\tilde S ; \R)$ preserves the homology intersection number. 
The assertion then follows from  (i) $\tilde F_\ast |_{E^-} = f_\ast|_{W(G,f)}$ and (ii) assertion (3) of this proposition. 
\end{proof}

Knowing that $\langle \cdot, \cdot \rangle_{W(G,f)}$ is well-defined, 
we can compute $\langle \eta_1, \eta_2 \rangle_{W(G,f)}= \langle \eta_1', \eta_2' \rangle_{W(\tau)}$ 
by using the basis elements $\eta_1, \eta_2 \in W(G,f)$ discussed in $\S$~\ref{sssec:basis1}, \ref{sssec:basis2} and \ref{sssec:basis3}, and their particular  extensions $\eta_1', \eta_2' \in W(\tau)$ introduced in $\S$~\ref{subsec:transitional & terminal}.
For this, it is convenient to study how transitional and terminal elements contribute to the skew-symmetric form. 
Straight forward calculation of determinants at incident gates yields the following:

\begin{proposition}\label{localcomputation}
Let $v$ be a vertex with $k$ incident gates, numbered $0,\ldots, k-1,$ conterclockwise.
We have 
$\langle \sigma_i, \sigma_j \rangle = \langle \sigma_0, \sigma_{j-i} \rangle$,
$\langle \omega_i, \sigma_j \rangle = \langle \omega_0, \sigma_{j-i} \rangle$
and
$\langle \omega_i, \omega_j \rangle = \langle \omega_0, \omega_{j-i} \rangle$  for $0\leq i\leq j\leq k-1$.
Moreover, 
$$
\langle \sigma_0, \sigma_i \rangle = 
\begin{cases}
-\frac12 & \text{if $i=1$}\\
\frac12 & \text{if $i=k-1$}\\
0 & \text{otherwise}
\end{cases}
\qquad
\langle \omega_0, \sigma_i \rangle = 
\begin{cases}
-\frac12 & \text{if $i=0$}\\
\frac12 & \text{if $i=k-1$}\\
0 & \text{otherwise}
\end{cases}
$$
$$
\text{ and }  \quad
\langle \omega_0, \omega_i \rangle = 
\begin{cases}
\frac{(-1)^i}2 & \text{if $i\neq 0$}\\
0 & \text{if $i=0$.}
\end{cases}
$$
\end{proposition}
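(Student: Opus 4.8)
The plan is to reduce the entire statement to a finite local computation at a single vertex $v$ with $k$ incident gates, since the skew-symmetric form $\langle\,\cdot\,,\,\cdot\,\rangle_{W(\tau)}$ is a sum of contributions, one per switch, and the only switches that "see" the branches on which $\sigma_i,\sigma_j,\omega_i,\omega_j$ are nonzero are the switches at the gates of $v$. First I would record the setup: $\sigma_i$ is supported on the single infinitesimal edge $x_i$, while $\omega_i$ is supported on the infinitesimal edges incident to $v$, carrying weights $\pm\tfrac12$ arranged so that gate $i$ has weight $1$ and every other gate has weight $0$; at each gate, the single real branch $a$ and two consecutive infinitesimal branches meet, and $\langle\,\cdot\,,\,\cdot\,\rangle_{W(\tau)}$ contributes $\tfrac12\sum_{i<j}\det$ of the $2\times 2$ weight-matrix over the branches at that switch. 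The cyclic symmetry relations $\langle\sigma_i,\sigma_j\rangle=\langle\sigma_0,\sigma_{j-i}\rangle$ etc. follow immediately from the rotational symmetry of the infinitesimal $k$-gon: relabeling gates by $i\mapsto i-\ell$ is a combinatorial symmetry that carries $\sigma_i\mapsto\sigma_{i-\ell}$, $\omega_i\mapsto\omega_{i-\ell}$, and preserves the cyclic order hence the form. So it suffices to compute $\langle\sigma_0,\sigma_i\rangle$, $\langle\omega_0,\sigma_i\rangle$, $\langle\omega_0,\omega_i\rangle$.

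Next I would carry out the three local computations. For $\langle\sigma_0,\sigma_i\rangle$: the element $\sigma_0$ puts weight $1$ on infinitesimal edge $x_0$ and $0$ elsewhere, so the only switches where both arguments can be nonzero are the two endpoints of $x_0$, i.e. gates $0$ and $1$ (in the convention that $x_i$ joins gate $i$ to gate $i+1$, so $x_0$ joins gate $0$ and gate $1$; the indexing from the proof of Lemma~\ref{switchlem} has $x_{i-1}+x_i=w_i$, consistent with this). At gate $0$ the infinitesimal branches present are $x_{k-1}$ and $x_0$; at gate $1$ they are $x_0$ and $x_1$. Then $\sigma_i$ is nonzero at gate $0$ only if $i\in\{k-1,0\}$ and at gate $1$ only if $i\in\{0,1\}$; plugging into the determinant and halving yields $-\tfrac12$ for $i=1$, $+\tfrac12$ for $i=k-1$, and $0$ otherwise (the $i=0$ case gives $\langle\sigma_0,\sigma_0\rangle=0$ automatically by skew-symmetry). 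The computation of $\langle\omega_0,\sigma_i\rangle$ is the same kind of bookkeeping: $\sigma_i$ is supported at gates $i$ and $i+1$, while $\omega_0$ has gate-weight $1$ at gate $0$ and $0$ at all other gates but is nonzero on the infinitesimal edges; the sign conventions on the $\pm\tfrac12$ weights of $\omega_0$ (Figure~\ref{fig:weightedpath}) then give $-\tfrac12$ when $i=0$, $+\tfrac12$ when $i=k-1$, and $0$ otherwise. Finally $\langle\omega_0,\omega_i\rangle$: both are supported on all infinitesimal edges of $v$, so one sums the $2\times 2$ determinants over all $k$ gates; using the explicit alternating $\pm\tfrac12$ weights one obtains a telescoping/alternating sum evaluating to $(-1)^i/2$ for $i\neq 0$ and $0$ for $i=0$.

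The main obstacle — really the only delicate point — is getting all the \emph{signs} right: the cyclic order of the branches at each switch is fixed by the orientation of $S$ and the embedding of $\tau$, and the $\pm\tfrac12$ assignments defining $\omega_i$ depend on that same data, so one must fix a single convention (matching Figure~\ref{fig:weightedpath} and Figure~\ref{fig:switch}) and track it consistently through the determinant $\begin{vmatrix}\eta(b_i)&\eta(b_j)\\\zeta(b_i)&\zeta(b_j)\end{vmatrix}$ with $i<j$ in the prescribed cyclic order. Once the convention is pinned down, everything is a routine finite check; I would present one case ($\langle\sigma_0,\sigma_i\rangle$) in full and indicate that the other two are analogous, noting that the independence from $k$ (except through the single edge case $i=k-1$) reflects the fact that only the two or $k$ gates adjacent to the support of the arguments contribute.
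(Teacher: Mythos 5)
Your proposal is correct and takes the same approach as the paper, which in fact offers no proof at all beyond the one-line remark that the proposition follows from ``straight forward calculation of determinants at incident gates''; your write-up is a careful spelling out of exactly that local computation, correctly reducing to the switches at the gates of $v$, correctly invoking the cyclic symmetry, and correctly flagging the sign conventions as the only delicate point. The one spot worth tightening if you were to write it in full is the $\langle\omega_0,\omega_i\rangle$ case: the sum of determinants over the $k$ gates is nonzero only because $k$ is odd, so the sequence of weights $(-1)^0/2,\ldots,(-1)^{k-1}/2$ fails to alternate across the wraparound gate (consecutive entries $\omega_0(x_{k-1})=\omega_0(x_0)=\tfrac12$), and it is precisely this parity defect that produces $(-1)^i/2$ rather than $0$.
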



\subsection{Degeneracies of the skew-symmetric form and the second decomposition} 
\label{subsec:radical}

In this section, we investigate the totally degenerate subspace of $W(G,f)$,
$$
Z := \{ \eta \in W(G,f) \ | \ 
\langle \zeta, \eta \rangle = 0 \ 
\mbox{ for all } 
\zeta \in W(G,f) \},
$$
the {\it radical} of the skew-symmetric form.   It will lead us, almost immediately, to the second decomposition theorem and another new invariant of pA maps.  We begin by showing how $Z$ has already appeared in our work, in a natural way.

\begin{proposition}\label{prop:dim Z}
Let $s$ be the number of punctures of $S$.
\begin{enumerate}
\item If $\tau$ is orientable,  $\dim Z=s-1$.
\item If $\tau$ is non-orientable, then
\begin{eqnarray*}
\dim Z & = &
\# ( \mbox{punctures of $S$ that correspond to two punctures in } \tilde S) \\
& = & 
\#(\mbox{punctures of $S$ represented by loops in $\tau$} \\
&& \quad \mbox{with even numbers of corners}).
\end{eqnarray*} 
\end{enumerate}
\end{proposition}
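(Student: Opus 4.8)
The plan is to compute $\dim Z$ by relating the radical of the skew-symmetric form directly to the topology of the (possibly branched) double cover, using the identifications already established. First consider the orientable case. Here $W(G,f)\cong H_1(S;\R)$ by Lemma~\ref{lem:dimW(G,f)}, and by Proposition~\ref{prop:properties of skew-sym. form}(2) the form $\langle\cdot,\cdot\rangle_{W(G,f)}$ is the ordinary homology intersection pairing on $H_1(S;\R)$. For a surface $S$ of genus $g$ with $s$ punctures, the intersection form on $H_1(S;\R)\cong \R^{2g+s-1}$ has a well-known radical of dimension $\max(s-1,0)$: it is spanned by the classes of small loops around the punctures, subject to the single relation that their sum is nullhomologous (when $s\geq 1$). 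So in the orientable case I would simply invoke this standard fact, being careful to note that the hypothesis on $S$ (it admits a hyperbolic structure) forces $2g+s\geq 3$, and that the paper has reduced to the case $S$ is finitely punctured with $s\geq 1$, so $\dim Z=s-1$.

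For the non-orientable case, the key is Proposition~\ref{prop:properties of skew-sym. form}(3)(b): the form on $W(G,f)$ is, up to the factor $2$, the restriction to $E^-$ of the intersection form on $H_1(\tilde S;\R)$, where $\tilde S$ is the orientation cover. Since $\tilde S$ is a closed-or-punctured orientable surface, its intersection form is again the standard one, with radical spanned by loops around the punctures of $\tilde S$ modulo their sum. I would then decompose this radical under the involution $\iota_*$: a puncture of $S$ that lifts to two punctures $p,\iota(p)$ of $\tilde S$ contributes the $\iota_*$-anti-invariant class $[\partial p]-[\partial\iota(p)]$ to $E^-$, while a puncture of $S$ that lifts to a single puncture of $\tilde S$ contributes a class fixed by $\iota_*$ (hence in $E^+$, not $E^-$) — here one must check that $\iota_*$ sends the boundary of a single lifted puncture to itself with the correct sign, i.e. orientation-preservingly on that small loop, which follows because $\iota$ permutes the punctures and near a non-branch puncture acts as a homeomorphism fixing it. The upshot is that $Z$, being the radical of the form restricted to $E^-$ (equivalently, $E^-\cap\mathrm{rad}(H_1(\tilde S;\R))$ once one checks the radical splits as a direct sum of its $\pm$-eigenparts, which it does since $\iota_*$ is an involution preserving the radical), is spanned precisely by the anti-invariant classes $[\partial p]-[\partial\iota(p)]$, one for each pair of punctures of $\tilde S$ lying over a single puncture of $S$. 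Counting these gives exactly $\#(\text{punctures of }S\text{ that lift to two punctures of }\tilde S)$, and by Proposition~\ref{prop: orientation cover is orientable}(2) this equals the number of punctures of $S$ represented by a loop in $\tau$ with an even number of corners. One small subtlety: I must verify that these anti-invariant boundary classes are not subject to a relation in $E^-$ (the relation $\sum_q[\partial q]=0$ in $\mathrm{rad}(H_1(\tilde S;\R))$ is $\iota_*$-invariant, hence lies in $E^+$ and imposes no relation on $E^-$), so no $-1$ appears in the non-orientable count.

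The main obstacle I anticipate is the bookkeeping around branch points and orientations: one needs to know exactly how $\iota$ acts on small loops around the various types of punctures of $\tilde S$ (those over odd vertices, those over even/partial vertices filled in, and those over the original punctures of $S$), and to confirm that the radical of the intersection form on $H_1(\tilde S;\R)$ intersected with $E^-$ has the claimed basis with no hidden relations or extra degeneracies coming from the branched nature of the cover. Proposition~\ref{prop: orientation cover is orientable} and the construction of $\tilde S$ in Definition~\ref{def:orientation cover} supply everything needed, but it will take care to assemble. I would organize the argument so that the orientable case is dispatched in a sentence or two and the bulk of the proof is the eigenspace analysis of the radical of $H_1(\tilde S;\R)$.
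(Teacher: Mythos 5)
Your proposal is correct and follows essentially the same route as the paper's own proof: identify the form with the intersection pairing on $H_1(S;\RR)$ in the orientable case and read off the radical, and in the non-orientable case pass to $\tilde S$, use Theorem~\ref{thm:involution} together with Proposition~\ref{prop:properties of skew-sym. form}(3) to identify $Z$ with the $E^-$-part of the radical of the intersection form on $H_1(\tilde S;\RR)$, and count the anti-invariant puncture classes $\alpha_i-\beta_i$. The paper makes this concrete by writing down the explicit basis $\{\alpha_i\pm\beta_i,\gamma_j\}$ of the radical and sorting it into $E^\pm$, which settles in one stroke the points you flag as needing care (the relation $\sum[\partial q]=0$ lying in $E^+$, and the agreement of the restricted radical with $E^-\cap\mathrm{rad}$, the latter resting on $E^+\perp E^-$ since $\iota_\ast$ preserves the form).
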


\begin{proof}
In the orientable case, recall Lemma~\ref{lem:dimW(G,f)} which states $W(G,f)  \cong H_1(S; \R)$, and Lemma 3.2.2 of \cite{PH} which shows that our skew-symmetric form agrees with the homology intersection form.
Hence the space $Z$ is generated by the homology classes of $s$ loops around the punctures.    
Because their sum is null-homologous, they are linearly dependent and $\dim Z = s-1$. 

In the non-orientable case, let $\tilde S$ be the orientation cover of $S$ (Definition~\ref{def:orientation cover}). Recall the eigen spaces $E^{\pm}$ for the deck transformation $\iota: \tilde S \to \tilde S$, cf. Theorem~\ref{thm:involution}.
Let $s$ (resp.\ $r$) be the number of punctures of
$S$ that lift to two (resp.\ single) punctures in $\tilde S$, and
let $\alpha_1, \beta_1, \ldots, \alpha_s, \beta_s, \gamma_1, \ldots,
\gamma_r$ be the homology classes of loops around the punctures of
$\tilde S$, chosen so that $\iota_\ast\alpha_i=\beta_i$ for all $i=1,
\ldots, s$ and $\iota_\ast\gamma_j = \gamma_j$ for all $j=1, \ldots, r$
and oriented so that their sum is zero.
The radical of the homology intersection form on $H_1(\tilde S; \R)$ is spanned by 
\begin{eqnarray*}
&&\spn{\alpha_1, \beta_1, \ldots,
\alpha_s, \beta_s, \gamma_1, \ldots, \gamma_r} \\
& = & \spn{\alpha_1-\beta_1, \ldots, \alpha_s-\beta_s, \alpha_1+\beta_1,
\ldots, \alpha_s+\beta_s, \gamma_1, \ldots, \gamma_r}.
\end{eqnarray*}
Note that $\alpha_i-\beta_i\in E^-$ and $\alpha_i+\beta_i, \gamma_j \in E^+$ for all $i=1, \ldots, s$ and $j=1, \ldots, r$.  
By Theorem~\ref{thm:involution} and assertion (3) of Proposition~\ref{prop:properties of skew-sym. form}, we obtain that  $Z = \spn{\alpha_1-\beta_1, \cdots, \alpha_s-\beta_s}$. 
Clearly, $\alpha_1-\beta_1, \cdots, \alpha_s-\beta_s$ are linearly independent, i.e., $\dim Z = s$.
\end{proof}

\begin{corollary}\label{rem:onepuncture}
Assume that $S$ is once punctured and that $\tau$ is non-orientable. Then $\dim Z =1$ if and only if $\dim W(G,f)$ is odd.
\end{corollary}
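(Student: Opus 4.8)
The plan is to deduce this immediately from Proposition~\ref{prop:dim Z} together with a parity argument coming from the skew-symmetric form. First I would recall that since $S$ is once punctured, $s=1$, so Proposition~\ref{prop:dim Z}(2) says that $\dim Z$ is either $0$ or $1$ according as the single puncture of $S$ lifts to one or two punctures of $\tilde S$. Thus the statement $\dim Z = 1$ is equivalent to saying that the puncture lifts to two punctures, and I must show this happens exactly when $\dim W(G,f)$ is odd.

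The key observation is that $W(G,f)/Z$ carries a \emph{non-degenerate} skew-symmetric form, induced from $\langle\cdot,\cdot\rangle_{W(G,f)}$ by passing to the quotient by its radical $Z$. A non-degenerate skew-symmetric form on a real vector space forces that space to be even-dimensional. Hence $\dim(W(G,f)/Z)$ is always even, and therefore
\[
\dim W(G,f) \equiv \dim Z \pmod 2.
\]
Since in the once-punctured non-orientable case $\dim Z \in \{0,1\}$, we get $\dim Z = 1$ exactly when $\dim W(G,f)$ is odd, and $\dim Z = 0$ exactly when $\dim W(G,f)$ is even. That is precisely the claim.

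To make this airtight I would spell out why the induced form on $W(G,f)/Z$ is well-defined and non-degenerate: it is well-defined because $Z$ is by definition the set of vectors pairing trivially with all of $W(G,f)$, so the value $\langle \eta,\zeta\rangle$ depends only on the cosets $\eta+Z, \zeta+Z$; and it is non-degenerate because if $\eta+Z$ pairs trivially with every coset then $\eta$ pairs trivially with every element of $W(G,f)$, i.e. $\eta \in Z$. Combined with the standard fact that a non-degenerate alternating bilinear form on a finite-dimensional space over a field of characteristic $\neq 2$ exists only in even dimension, the parity conclusion follows.

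I do not anticipate a serious obstacle here; the only mild subtlety is making sure we are invoking the non-orientable clause of Proposition~\ref{prop:dim Z} with $s=1$ correctly, so that $\dim Z$ is genuinely constrained to $\{0,1\}$ rather than being an arbitrary nonnegative integer. Once that is in place, the argument is just the even-dimensionality of a symplectic (i.e. non-degenerate alternating) space applied to $W(G,f)/Z$.
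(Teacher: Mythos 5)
Your proof is correct and matches the paper's argument essentially step for step: both use the non-degenerate skew-symmetric form on $W(G,f)/Z$ to force $\dim(W(G,f)/Z)$ to be even, hence $\dim Z \equiv \dim W(G,f) \pmod 2$, and then invoke Proposition~\ref{prop:dim Z} with $s=1$ to bound $\dim Z \leq 1$. There is nothing to add.
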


\begin{proof}
The induced skew-symmetric form on $W(G, f)/Z$
is non-degenerate, and so the dimension of $W(G, f)/Z$ is even.  
Thus, $\dim Z$ is odd if and only if $\dim W(G, f)$ is odd. 
Since $S$ has exactly one puncture, Proposition~\ref{prop:dim Z} yields that $\dim Z\leq 1$, and the corollary follows. 
\end{proof}

\begin{remark}\label{rem:completeness}
Straightforward modifications of our arguments show that if $\tau$ is a
non-orientable train track (not necessarily induced by a train track
map), then the dimension of $\rad W(\tau)$ is the number of
complementary regions of $\tau$ with even numbers of corners.  In
particular, if $\tau$ is complete, then the complement of $\tau$
consists of triangles and monogons (Theorem 1.3.6 of \cite{PH}), and
so the skew-symmetric form on $W(\tau)$ is non-degenerate in this case.
Although not explicit stated, this is the case covered by Theorem~3.2.4 of \cite{PH}. 
\end{remark}

\begin{theorem}[Second Decomposition]\label{thm:2nd decomposition}
Let $p(x)$ $($resp. $s(x))$ be the characteristic polynomial of $f_\ast|_Z$ $($resp. $f_\ast|_{W(G,f)/Z})$.
The map $f_\ast$ preserves the direct sum decomposition 
$W(G,f) \cong Z \oplus (W(G,f)/Z)$ so that
$h(x) = p(x) s(x).$
Moreover, we have:
\begin{enumerate}
\item 
The polynomial $p(x)$ is an invariant of the pA mapping class $[F] \in {\rm Mod}(S)$.
The restriction $f_\ast|_Z$ encodes how $F$ permutes the punctures whose projections to $\tau$ have even numbers of corners. 
In particular, $f_\ast|_Z $ is a periodic map, so that all the roots of $p(x)$ are roots of unity and the polynomial $p(x)$ is palindromic or anti-palindromic.  
\item 
The polynomial $s(x)$ is an invariant of $[F]$.
The skew-symmetric form $\langle \cdot, \cdot \rangle_{W(G,f)}$ naturally induces a symplectic form on $W(G,f)/Z$.  
The map $f_\ast$ induces a symplectomorphism of $W(G,f)/Z$.  Hence  $s(x)$  is palindromic.
\item 
The homology polynomial $h(x)$ is either palindromic or anti-palindromic.
\end{enumerate} 
\end{theorem}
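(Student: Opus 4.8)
\textbf{Proof proposal for Theorem~\ref{thm:2nd decomposition}.}

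The plan is to handle the three assertions in sequence, using the results established so far. First I would establish the direct sum decomposition $W(G,f)\cong (W(G,f)/Z)\oplus Z$ as $f_\ast$-invariant. Since $Z$ is the radical of the $f_\ast$-invariant skew-symmetric form $\langle\cdot,\cdot\rangle_{W(G,f)}$ (Proposition~\ref{prop:properties of skew-sym. form}(4)), it follows immediately that $f_\ast(Z)\subseteq Z$; combined with Corollary~\ref{cor:isomorphism of f_ast}, $f_\ast|_Z$ is an automorphism of $Z$, and $f_\ast$ descends to an automorphism of the quotient $W(G,f)/Z$. Choosing any $f_\ast$-invariant complement (which exists because, for instance, in the non-orientable case Proposition~\ref{prop:dim Z} identifies $Z$ with $\spn{\alpha_i-\beta_i}$ inside the radical of the intersection form on $H_1(\tilde S;\R)$, and the intersection form itself is $f_\ast$-invariant and nondegenerate on a complement; in the orientable case the analogous splitting of $H_1(S;\R)$ into the span of puncture loops and a complement works), we get the factorization $\chi(f_\ast|_{W(G,f)})=\chi(f_\ast|_{W(G,f)/Z})\,\chi(f_\ast|_Z)$. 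Actually it is cleaner to avoid choosing a complement: a block-triangular matrix for $f_\ast$ adapted to the subspace $Z$ already gives $\chi(f_\ast|_{W(G,f)})=\chi(f_\ast|_Z)\,\chi(f_\ast|_{W(G,f)/Z})$, which is all the polynomial identity needs.

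For assertion (1), invariance of $\chi(f_\ast|_Z)$ follows from Corollary~\ref{cor:homology polynomial is invariant} together with the fact that $Z$ is the radical of the intersection-form-induced skew form on the invariant subspace $W(G,f)\cong E^-$ (or $H_1(S;\R)$), hence $Z$ is itself a topological invariant of $[F]$. The geometric interpretation comes from Proposition~\ref{prop:dim Z}: a basis of $Z$ is given by (differences of lifts of) loops around those punctures of $S$ whose $\tau$-projections have an even number of corners, and $F_\ast$ permutes these puncture classes exactly as $F$ permutes the punctures. Therefore $f_\ast|_Z$ is a permutation matrix up to signs — more precisely, conjugate to a signed permutation matrix — hence periodic, so its eigenvalues are roots of unity and its characteristic polynomial is a product of cyclotomic polynomials. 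A product of cyclotomic polynomials $x^n\chi(1/x)=\pm\chi(x)$, so $\chi(f_\ast|_Z)$ is palindromic or anti-palindromic.

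For assertion (2), invariance of $\chi(f_\ast|_{W(G,f)/Z})$ follows by dividing: it equals $\chi(f_\ast|_{W(G,f)})/\chi(f_\ast|_Z)$, a ratio of two invariants. Since $Z$ is precisely the radical, the skew-symmetric form descends to a nondegenerate, i.e. symplectic, form on $W(G,f)/Z$, and by Proposition~\ref{prop:properties of skew-sym. form}(4) the induced map $f_\ast|_{W(G,f)/Z}$ preserves it, hence is a symplectomorphism. A symplectic matrix $M$ satisfies $M^{\top}JM=J$ for the standard symplectic form $J$, from which $\chi_M(x)=x^{2m}\chi_M(1/x)$ by a standard argument ($M$ is conjugate to $(M^{-1})^{\top}$ via $J$, and $\chi_{(M^{-1})^{\top}}(x)=\chi_{M^{-1}}(x)=x^{-2m}\det(M)\chi_M(1/x)=x^{-2m}\chi_M(1/x)$ using $\det M=1$). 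Thus $\chi(f_\ast|_{W(G,f)/Z})$ is palindromic. Finally, assertion (3): the homology polynomial is the product of a palindromic polynomial (the symplectic factor) and a palindromic-or-anti-palindromic polynomial (the puncture factor); a product of a palindromic polynomial with a palindromic or anti-palindromic polynomial is again palindromic or anti-palindromic, since the operation $p(x)\mapsto x^{\deg p}p(1/x)$ is multiplicative and sends such polynomials to $\pm$ themselves.

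The main obstacle I anticipate is making the direct sum splitting genuinely $f_\ast$-invariant (as opposed to merely getting the polynomial factorization from block-triangularity). For the polynomial identity and assertions (1)--(3) as stated one only needs $Z$ invariant, which is immediate; but if one wants the literal statement ``$f_\ast$ preserves the decomposition $W(G,f)\cong(W(G,f)/Z)\oplus Z$'' with an honest internal complement, one should invoke the ambient nondegenerate intersection form on $H_1(\tilde S;\R)$ (resp. $H_1(S;\R)$): the orthogonal complement of $Z$ with respect to that form is $f_\ast$-invariant since $f_\ast$ preserves the form, and it maps isomorphically onto $W(G,f)/Z$. The one point requiring a little care is checking that this orthogonal complement is transverse to $Z$, i.e. that $Z$ is not isotropic for the \emph{ambient} form even though it is for the restricted one — this holds because the puncture classes pair nontrivially with suitable arcs crossing them.
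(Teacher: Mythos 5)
Your proposal follows essentially the same route as the paper: $f_\ast$-invariance of $Z$ from Proposition~\ref{prop:properties of skew-sym. form}(4) and surjectivity of $f_\ast|_{W(G,f)}$ (Corollary~\ref{cor:isomorphism of f_ast}); block-triangularity yielding the polynomial factorization; periodicity of $f_\ast|_Z$ from Proposition~\ref{prop:dim Z} and the permutation of punctures; palindromy of the symplectic factor from the induced symplectomorphism on $W(G,f)/Z$; palindromy of the product. One improvement over the paper's exposition: you explicitly note that $Z$ is a topologically defined subspace of $E^-$ (the radical of the restricted intersection form), which is what makes $\chi(f_\ast|_Z)$ an invariant of $[F]$ in its own right; the paper's proof of item (2) invokes the invariance of the puncture polynomial without having spelled this out in item (1).

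One side remark in your last paragraph is incorrect, though it does not damage the main argument. You suggest obtaining an honest $f_\ast$-invariant complement to $Z$ as the orthogonal complement of $Z$ with respect to the ambient intersection form on $H_1(\tilde S;\R)$, and you assert that this works because ``$Z$ is not isotropic for the ambient form.'' In fact $Z$ \emph{is} isotropic for the ambient form — indeed it lies inside the radical of the intersection form on $H_1(\tilde S;\R)$: the proof of Proposition~\ref{prop:dim Z} exhibits that radical as $\spn{\alpha_1,\beta_1,\ldots,\alpha_s,\beta_s,\gamma_1,\ldots,\gamma_r}$, and $Z=\spn{\alpha_1-\beta_1,\ldots,\alpha_s-\beta_s}$ sits inside it. So the ambient-orthogonal complement of $Z$ is all of $H_1(\tilde S;\R)$, and restricting the form to $E^-=W(G,f)$ gives exactly the form whose radical is $Z$, so again no transverse complement arises this way. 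As you yourself observe, the cleaner (and correct) reading is that the direct-sum notation stands for the block-triangular structure with respect to the invariant subspace $Z$, which is all the factorization $\chi(f_\ast|_{W(G,f)})=\chi(f_\ast|_Z)\chi(f_\ast|_{W(G,f)/Z})$ requires; I would simply drop the claim about non-isotropy.
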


\begin{proof}
Suppose $\eta \in Z.$ 
By assertion (4) of Proposition~\ref{prop:properties of skew-sym. form}, we have $0 =
\langle \eta, \zeta \rangle = \langle f_\ast(\eta), f_\ast(\zeta) \rangle$ for
all $\zeta\in W(G,f)$.  
By Corollary~\ref{cor:isomorphism of f_ast}, $f_\ast|_{W(G,f)}$ is surjective, and so $f_\ast(\eta)\in Z$.  
Thus $f_\ast$ preserves the decomposition $(W(G,f)/Z)\oplus Z$.

(1) 
The restriction $f_\ast|_Z$  is periodic because of Proposition~\ref{prop:dim Z} and the fact that $[F]$ permutes the punctures of $S$. 
Hence all the roots of $p(x)$ are roots of unity.  
Moreover, if $\mu$ is a root of $p(x)$, then $\frac1\mu =
\bar{\mu}$ is also a root of $p(x)$ because $p(x)\in\RR[x]$.  This implies
that $p(x)$ is palindromic or anti-palindromic.

(2)
By the definition of $Z$, the skew-symmetric form induces a non-degenerate form on $W(G,f)/Z$.  
This together with Proposition~\ref{subsec:skew-symmetric form on W(G,f)}-(4) implies that the polynomial $s(x)$ is palindromic.  It is an invariant of $[F]$ because it is the quotient of two polynomials, both of which have been proved to be invariants.  

(3) 
The homology polynomial $h(x)$ is either palindromic or anti-palindromic because it is a product of two polynomials, one of which is palindromic and the other of which is either palindromic or anti-palindromic.
\end{proof}

This concludes the proof of parts (2) and (3) of Theorem~\ref{thm:summarize}.   Since the proof of part (1) was completed in 
$\S$\ref{sec:1st polynomial invariant}, it follows that Theorem~\ref{thm:summarize} has been proved.     


\section{Applications}\label{sec:applications} In this section we give several applications of Theorem~\ref{thm:summarize}.   Corollary~\ref{cor:combinatorial invariants of [F]} summarizes the numerical class invariants of $[F]$ that, as a consequence of Theorem~\ref{thm:summarize}, can be computed from the train track $\tau$ by simple counting arguments.  
Corollary~\ref{cor on s(x)} is an application to fibered hyperbolic knots in 3-manifolds. 
Corollary~\ref{cor:powers} shows that our three polynomials behave very nicely under the passage $[F] \to[F^k]$.

\begin{corollary} \label{cor:combinatorial invariants of [F]}
Under the same notation as in Theorem~\ref{thm:summarize}, 
let $n, v, v_o$ be the number of edges, vertices, odd vertices respectively  in the
graph $G$.  
Let $s$ $($resp. $r)$ be the number of punctures of $S$ which are represented by loops in $\tau$ with even $($resp. odd$)$ numbers of corners.
Let $g$ $($resp. $\tilde{g})$ be the genus of $S$ $($resp. its orientation
cover $\tilde{S})$. 
\begin{enumerate}
\item The orientability $($or non-orientability$)$ of $\tau$ is a class invariant of $[F]$.
\item The degree of the homology polynomial  is a class invariant.  It is:
$$\deg  h(x) =
\begin{cases}
n - v + 1 & \text{if $\tau$ is orientable,}\\
n - v + v_o & \text{if $\tau$ is not orientable.}
\end{cases}
$$
\item The degree of the puncture polynomial is a class invariant. It is:
$$\deg p(x)=
\begin{cases}
s-1 & \text{if $\tau$ is orientable,}\\
s & \text{if $\tau$ is not orientable.}
\end{cases}
$$
\item  The degree of the symplectic polynomial is a class invariant.  It is:
$$\deg  s(x) =
\begin{cases}
2g & \text{if $\tau$ is orientable,}\\
2(\tilde g - g)& \text{if $\tau$ is not orientable.}
\end{cases}
$$  
\end{enumerate}
\end{corollary}

\begin{remark}
Assertion (4)  implies that the dilatation of $[F]$ is the largest real root of a polynomial of degree $2d$, where $2d \leq 2g$ $($resp. $2(\tilde g -g))$. However, this bound is not sharp because, as will be seen in  Example~\ref{ex:k89},  the symplectic polynomial is not necessarily irreducible.
\end{remark}

\begin{proof} 
Assertion (1) is clear.  See Lemma~\ref{lem:dimW(G,f)} for (2). 
In the orientable case each puncture is represented by a loop in $\tau$ with  an even number of corners.  This, together with Proposition~\ref{prop:dim Z}, implies (3).

To prove (4), let $v_e, v_p$  be the number of even vertices, partial vertices in $G$, respectively.  
If $\tau$ is orientable, the assertion is clear.    
If $\tau$ is non-orientable, the Euler characteristics of $S$ and $\tilde S$ are
\begin{eqnarray*}
\chi(S) &=& v_o + v_e + v_p - n = 2-2g-(r+s), \\
\chi(\tilde S) &=& v_o + 2v_e + 2v_p -2n=2-2 \tilde g - (r+2s). 
\end{eqnarray*}
From Lemma~\ref{lem:dimW(G,f)} and Proposition~\ref{prop:dim Z}, we have:
$$\dim (W(G, f) / Z)= (n -v_e - v_p) -s = 2 (\tilde g - g).$$
Note that the sum of the degrees of the symplectic and puncture polynomials is the degree of the homology polynomial.
\end{proof}

\begin{corollary}\label{cor on s(x)}
$(1) $
The symplectic polynomial $s(x)$ is an invariant of fibered hyperbolic links in $3$-manifolds. 

$(2)$ 
Assume that $M$ is a homology $3$-sphere and $K \subset M$ is a fibered hyperbolic knot whose monodromy admits an orientable train track. 
Let $\Delta_K(x)$ denote the Alexander polynomial of $K$. Then
$$
s(x) = \left\{
\begin{array}{ll}
\Delta_K(x) & \mbox{ if } f \mbox{ is orientation preserving, } \\
\Delta_K(-x) & \mbox{ if } f \mbox{ is orientation reversing. } 
\end{array}
\right. 
$$
\end{corollary}

\begin{proof}
(1)  
Let $L\subset M$ be a link.
Thurston proved that a 3-manifold $M \setminus L$ is fibered over $S^1$ with a pA monodromy $[F]\in {\rm Mod}(S)$ if and only if  $M\setminus L$ is hyperbolic. Combining his result with assertion (2) of Theorem~\ref{thm:2nd decomposition}, we obtain the first claim.

(2)
Let $F_\ast: H_1(S; \RR) \to H_1(S; \RR)$ be the induced map. 
By assertion (1) of Proposition~\ref{prop:dim Z}, the space $Z$ is trivial.
Lemma~\ref{lem:dimW(G,f)} tells us that $W(G,f)/Z \cong W(G,f) \cong H_1(S; \RR)$. Since ($\pm$)-gate is mapped to a ($\mp$)-gate if and only if $f : G \to G$ is orientation reversing, we have 
$$f_\ast|_{W(G,f)/Z} = \left\{
\begin{array}{ll}
F_\ast & \mbox{ if } f \mbox{ is orientation preserving, } \\
-F_\ast & \mbox{ if } f \mbox{ is orientation reversing. } 
\end{array}
\right. $$
The fact that $\Delta_K( \pm x)=\chi(\pm F_\ast)$ yields the statement. 
\end{proof}

\begin{remark}
Corollary~\ref{cor on s(x)}-(2) can be seen as a refinement of Rykken's  Theorem 3.3 in \cite{rykken}. 
\end{remark}

Our final application is to prove that our three polynomials,  $h(x), p(x)$ and $s(x)$ behave in a very nice way under the passage $[F] \to [F^n]$:

\begin{corollary}\label{cor:powers}
Let $n > 0.$
If $f: G \to G$ represents a pA mapping class $[F]$, then $f^n$ represents $[F^n]$. 
Suppose $s([F], x) = \prod_i (x-z_i)$ and $p([F], x) = \prod_j (x-w_j)$ where $z_i, w_j \in \mathbb{C}$, then 
\begin{eqnarray*}
s([F^n], x) &=& \prod_i (x-z_i^n), \\
p([F^n], x) &=& \prod_j (x-w_j^n),\\
h([F^n], x) &=& \prod_i (x-z_i^n)\prod_j (x-w_j^n).
\end{eqnarray*}
\end{corollary}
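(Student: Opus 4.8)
The plan is to reduce everything to a single clean observation: passing from $f$ to $f^n$ replaces each $f_\ast$-invariant subspace decomposition by the \emph{same} decomposition, and replaces the restricted operator by its $n$-th power. First I would note that $f^n : G \to G$ is again a Bestvina--Handel train track map representing $[F^n]$, that the associated train track may be taken to be the \emph{same} $\tau$ (the gate structure, and hence the infinitesimal edges, are determined by the eventual behaviour of $f$, which is unchanged under taking powers), and consequently $W(G, f^n) = W(G, f)$ as subspaces of $V(G)$, with the \emph{same} skew-symmetric form $\langle\cdot,\cdot\rangle_{W(G,f)}$ by Proposition~\ref{prop:properties of skew-sym. form}(4). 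Therefore the radical $Z$ is also unchanged, and $(f^n)_\ast|_{W(G,f)} = (f_\ast|_{W(G,f)})^n$, $(f^n)_\ast|_Z = (f_\ast|_Z)^n$, and the induced map on $W(G,f)/Z$ is the $n$-th power of the induced map of $f_\ast$.

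Given that, the three identities are just the elementary fact about characteristic polynomials of powers of an endomorphism: if a linear map $A$ on a finite-dimensional space has $\chi(A) = \prod_i (x - z_i)$ over $\mathbb C$ (eigenvalues with multiplicity), then $\chi(A^n) = \prod_i (x - z_i^n)$. I would prove this by putting $A$ in Jordan (or just upper-triangular, over $\mathbb C$) form: the diagonal of $A^n$ is then $(z_1^n, \dots)$, so its characteristic polynomial is $\prod_i(x - z_i^n)$. Applying this with $A = f_\ast|_{W(G,f)/Z}$ (eigenvalues $z_i$) gives the first identity; with $A = f_\ast|_Z$ (eigenvalues $w_j$) gives the second; and the third follows from the Second Decomposition Theorem (Theorem~\ref{thm:2nd decomposition}), since $\chi((f^n)_\ast|_{W(G,f)}) = \chi((f^n)_\ast|_{W(G,f)/Z}) \cdot \chi((f^n)_\ast|_Z) = \prod_i(x - z_i^n)\prod_j(x - w_j^n)$.

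The only real content, and the step I expect to need the most care, is the claim that the train track $\tau$ and the subspace $W(G,f)$ are genuinely unchanged when $f$ is replaced by $f^n$ — i.e.\ that the gate structure at each vertex is a power-invariant of $f$. This follows because two edges $e_1, e_2$ at a vertex $v$ lie in the same gate iff $f^r(e_1)$ and $f^r(e_2)$ share a nontrivial initial segment for some $r > 0$, and this condition is insensitive to replacing $f$ by $f^n$ (run $r$ through multiples of $n$); similarly the infinitesimal-edge condition, being an ``appears as a subpath of $f^r(e)$ for some $r$'' condition, is unchanged. Hence the switch conditions defining $W(G,f)$ (Lemma~\ref{switchlem}) are literally the same system of equations, so $W(G,f^n) = W(G,f)$, and likewise the vertex-type classification and the orientability of $\tau$ persist. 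Once this is in place, everything else is the triangularization argument above, which is routine.

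\medskip

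One remark on bookkeeping: the statement writes the eigenvalue lists as $z_i, w_j$ counted with multiplicity, so no genuine multiplicity tracking is needed beyond what the factorizations $\prod_i(x - z_i)$ and $\prod_j(x - w_j)$ already encode; I would simply carry these products through verbatim. This also makes the third identity automatic rather than requiring a separate argument, since $\chi(f^n_\ast|_{W(G,f)})$ is by the decomposition the product of the other two, and the factorizations multiply.
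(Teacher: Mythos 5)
Your proposal is correct and follows essentially the same route as the paper's proof, which rests on the observation that $[F]$ and $[F^n]$ share the same graph $G$, train track $\tau$, and hence the same $W(G,f)$ and $Z$, and then combines the elementary fact about eigenvalues of powers with the Second Decomposition Theorem. The paper merely asserts this sharing, whereas you also sketch the reason (the gate and infinitesimal-edge conditions are ``eventual'' and hence stable under passing to powers), which is a worthwhile elaboration of a step the paper leaves implicit.
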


\begin{proof}
Note that the pA maps $[F]$ and $[F^n]$ act on the same surface and share the same graph $G$ and the associated train track $\tau$.
The direct sum decomposition in Theorem~\ref{thm:2nd decomposition} tells us that 
$f^n_\ast|_{W(G,f^n)/Z} = (f_\ast|_{W(G,f)/Z})^n$ and $f^n_\ast|_Z = (f_\ast|_Z)^n.$
Since $z_i$ and  $w_j$ are eigenvalues of $f_\ast|_{W(G,f)/Z}$ and $f_\ast|_Z$ respectively, the desired equations follow.  The product decomposition for the homology polynomial follows from the fact that it is a product of the other two polynomials.
\end{proof}


\section{Examples}\label{sec:examples}

All of our examples were analyzed with the software package XTrain
\cite{pbexp}, with some help from Octave \cite{octave}.  This package is an adaptation of the Bestvina-Handel algorithm to once-punctured surfaces. Our illustrations
show a train track $\tau$ (in the sense of \cite{BH}) embedded in
a once-punctured surface.  Regardless of whether $\tau$ admits an orientation,
we equip individual edges of the graph $G$ with a direction for the purpose of
specifying the map $f: G\to G$, although they coincide when $\tau$ is
orientable.   We remark that the limitations of the available software, at this time,  to once-punctured surfaces means that the puncture polynomial in our examples is always either 1 or $x-1.$ 

A surface is shown as a fundamental domain in the
Poincar\' e model for $\mathbb H^2$, with the identification pattern on the boundary given
by the labels on edges.  For example, a side crossed by an edge labeled
$a$ will be identified with the side crossed by the edge labeled
$\bar{a}$.  These labels also indicate the direction of an edge; $a$ is
the first half, $\bar{a}$ the second one.  The shaded regions in the pictures
contain all the infinitesimal edges associated with a vertex, as in
Figure~\ref{fig:vertex-types}.   To recover the graph $G$, collapse each shaded
region to a point.  In each example, we give the associated train track map on
edges of $G$.  That map determines the maps on the vertices.

\begin{example} \label{ex:filling-curves} 
We  illustrate Corollary~\ref{cor:homology polynomial is invariant}   with a  triplet of examples.  
Sketches (1), (2) and (3) of Figure~\ref{fig:filling-curves} show three copies of a once-punctured genus 2 surface $S= S_{2, 1}$, each containing two simple closed curves $u_i$ and $v_i$.   In all three cases  $u_i\cup v_i$  fills $S$, that is the complement of  the union of the two curves is a family of discs.  In all three cases the geometric intersection $i(u_i\cap v_i)  = 6.$   
\begin{figure}[htpb!] 
\centerline{\includegraphics[scale=.5] {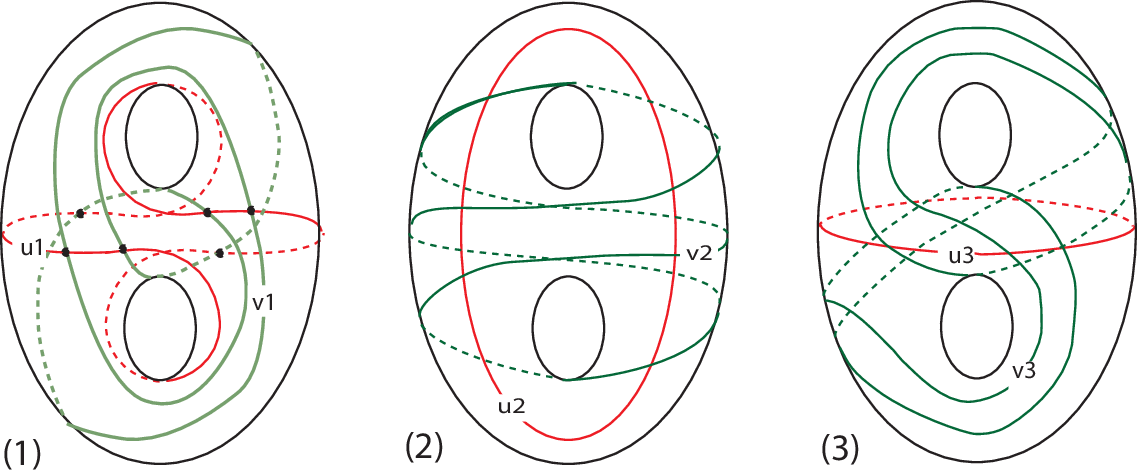}}
\caption {Curves on a surface of genus 2.}\label{fig:filling-curves}
\end{figure}
We use our curves to define three diffeomorphisms of $S$ by the formula  $F_i = T_{v_i}^{-1} T_{u_i}$ where $T_c$ denotes a Dehn twist about a simple closed curve $c$.   
By  a theorem of Thurston (Theorem 14.1 of \cite{FM}) there is a representation of the free subgroup of $\rm{Mod}(S)$ generated by $T_{u_i}$ and $T_{v_i}$ in ${\rm PSL}(2, \mathbb R)$ which sends the product
$F_i$  to  the matrix $\scriptsize \left(\begin{matrix} 1 & 0 \\ -6 & 1 \end{matrix}\right)^{-1}  \left(\begin{matrix} 1 & 6 \\ 0 & 1 \end{matrix}\right)  =
 \left(\begin{matrix} 1 & 6 \\ 6 & 37 \end{matrix}\right).$   By the Thurston's theorem, $F_i$ is pA and its dilation is the largest real root of the characteristic polynomial $x^2-38x+1$ of this matrix, that is  $37.9737\dots$ in all three cases.   
 We ask two questions: Are $F_1,F_2,F_3$ conjugate in ${\rm Mod}(S)$, and if they are not conjugate can our invariants distinguish them?   
 
See \cite{pbexp} for a choice of {\it standard curves} $a_0,d_0,c_0,d_1,c_1$ on $S$.   To obtain the needed input data for the computer software XTrain \cite{pbexp} we must express our maps as products of Dehn twists about these curves.  For simplicity, we denote the Dehn twist $T_{a_0}$ by the same symbol $a_0$. Using $A_i, C_i, D_i$ for the inverse Dehn twists of $a_i, c_i, d_i$, we find, after a small calculation, that:
\begin{eqnarray*}
F_1 
& = & 
c_0 d_0 d_1 A_0 C_1 c_1 d_1 c_0 d_0 A_0 D_0 C_0 D_1 C_1 c_1 a_0 D_1 D_0 C_0  \\
&&
c_1 b_0 (C_1 D_1)^6
c_1 d_1 c_0 d_0 a_0 D_0 C_0 D_1 C_1 (d_1 c_1)^6 
B_0 C_1 \\
F_2 
& = & 
c_1 (d_1 c_1)^6 a_0
c_1 d_1 c_0 d_0 A_0 D_0 C_0 D_1 C_1
A_0 (C_1 D_1)^6 C_1
c_1 d_1 c_0 d_0 a_0 D_0 C_0 D_1 C_1 \\
F_3 
& = & A_0D_0c_0d_0a_0B_1D_1c_0d_1b_1A_0D_0B_1D_1C_0d_1b_1d_0a_0B_1D_1 C_0d_1b_1A_0D_0C_0 d_0a_0(d_1c_1)^6
\end{eqnarray*}
Focussing on $F_1$ and $F_2$ first,  XTrain tells us that the associated transition matrices are:
\begin{equation*}
T_1 = 
\begin{bmatrix}
15&7&14&23&16  \\
10&6&10&16&11 \\
4&2&5&7&5  \\
2&1&2&3&1  \\
10&5&10&16&12 
\end{bmatrix}
\ {\rm and} \ \
T_2 =
\begin{bmatrix}
6 & 10 & 5 & 6 & 10\\
5 & 11 & 5 & 7 & 10\\
5 & 10 & 6 & 6 & 10\\
6 & 12 & 6 & 7 & 12\\
5 & 10 & 5 & 5 & 11
\end{bmatrix}
\end{equation*}
\noindent In both cases $\chi(f_\ast) = {\det}(xI-T_i) = x^5 -41x^4 + 118 x^3 -118 x^2 + 41 x -1,$
with largest real root $37.9737\dots$, as expected.

XTrain tells that the train tracks $\tau_1,\tau_2$ for our two examples are the ones that are  illustrated in Figure~\ref{fig:filling-curves-tt}. 
\begin{figure}[htpb!] 
\psfrag{a}{$a$} 
\psfrag{b}{$b$} 
\psfrag{c}{$c$} 
\psfrag{d}{$d$}
\psfrag{e}{$e$}
\psfrag{A}{$\overline a$} 
\psfrag{B}{$\overline b$} 
\psfrag{C}{$\overline c$} 
\psfrag{D}{$\overline d$}
\psfrag{E}{$\overline e$}
\psfrag{i}{(1) $\tau_1$, orientable}
\psfrag{ii}{(2) $\tau_2$, non-orientable}
\psfrag{0}{$v_0$}
\psfrag{1}{$v_1$}
\centerline{\includegraphics[scale=.6]{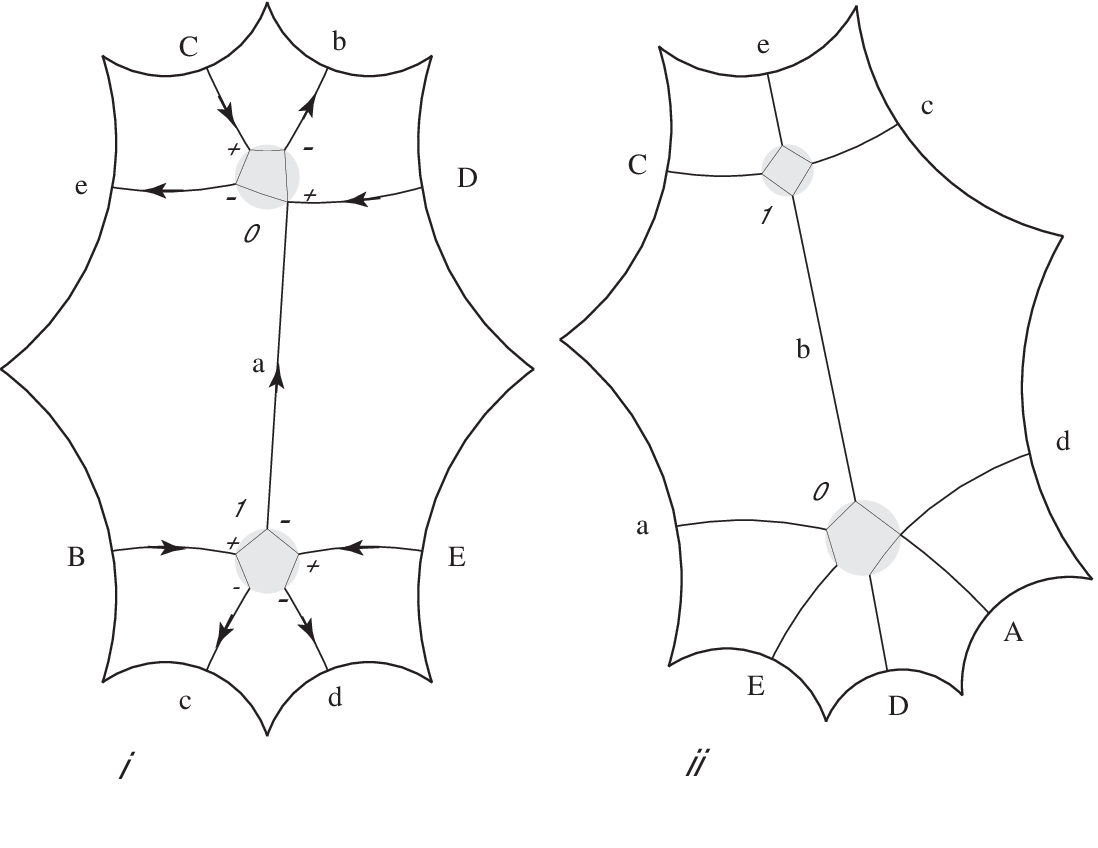}}
\caption {Train tracks for the maps $F_1,F_2$ of Example~\ref{ex:filling-curves}.} \label{fig:filling-curves-tt}
\end{figure}
With the train tracks $\tau_1,\tau_2$ in hand we can see, immediately, that $F_1$ and $F_2$ are inequivalent, because $\tau_1$ is orientable and $\tau_2$ is not.  Also, by Lemma~\ref{lem:dimW(G,f)} the dimension of $W(G_i, f_i)$, which is the degree of the homology polynomial, is $4$ (resp. $3$) when $i=1$ (resp. $2$).

We compute the homology and symplectic polynomials of $F_1$ and $F_2$ explicitly.

For $F_1$, a basis of $W(G_1, f_1)$, $\{\eta_b,\eta_c,\eta_d,\eta_e\}$,  were computed in Example~\ref{ex-of-basis-orientable}.  
Set $e_1 = b, e_2=c, e_3=d, e_4=e, e_5=a$ and follow the instructions in $\S$\ref{sssec:basis4} for finding the matrix $A_1$ representing $(f_1)_\ast |_{W(G_1, f_1)}$.  We obtain:   
\begin{equation*}
A_1= 
\scriptsize{
\begin{bmatrix}
1&0&0&0&0  \\
0&1&0&0&0 \\
0&0&1&0&0  \\
0&0&0&1&0   
\end{bmatrix}
\begin{bmatrix}
6&10&16&11&10 \\
2&5&7&5&4 \\
1&2&3&1&2  \\
5&10&16&12&10 \\
7&14&23&16&15  
\end{bmatrix}
\begin{bmatrix}
1&0&0&0 \\
0&1&0&0  \\
0&0&1&0\\
0&0&0&1  \\
1&-1&-1&1   
\end{bmatrix}
}
=
\scriptsize{
\begin{bmatrix}
16& 0& 6& 21\\
6 & 1 & 3 & 9 \\
3 & 0 & 1 & 3\\
15& 0& 6& 22
\end{bmatrix}
}
\end{equation*}
Its characteristic polynomial is $1 - 40 x + 78 x^2 - 40 x^3 + x^4 = (-1+x)^2 (1-38x +x^2),$
which is the homology polynomial.
Corollary~\ref{cor:combinatorial invariants of [F]}-(4) tells that the symplectic polynomial has degree $4$, hence it coincides with the homology polynomial.

For $F_2$, we apply $\S$\ref{sssec:basis3} and set the non-orientable loop $\mathcal L_0 = a \cup v_0$ and subgraph $\mathcal L = \mathcal L_1= a \cup b \cup v_0 \cup v_1$. Edges $c, d, e$ are not in $\mathcal L$.
We reorder the edges and call $e_1=c, e_2=d, e_3=e, e_4=a, e_5=b$. With this order, basis vectors of $W(G_2, f_2)$ are
$\eta_a = (1, 0, 0, 1, 2)'$; 
$\eta_c = (0, 1, 0, 0, 0)'$;
$\eta_e = (0, 0, 1, 0, -1)'$;
where ``prime'' means the transpose. Following the instructions in $\S$\ref{sssec:basis4}, we obtain the matrix $A_2$ representing $(f_2)_*|_{W(G_2, f_2)}$.
\begin{equation*}
A_2 =
\scriptsize{
\begin{bmatrix}
1&0&0&0&0  \\
0&1&0&0&0 \\
0&0&1&0&0   
\end{bmatrix}
\begin{bmatrix}
6 &  6 &  10 &  5 &  10\\
6 &  7 &  12 &  6 &  12\\
5 &  5 &  11 &  5 &  10\\
5 &  6 &  10 &  6 &  10\\
5 &  7 &  10 &  5 &  11
\end{bmatrix}
\begin{bmatrix}
1&0&0  \\
0&1&0 \\
0&0&1  \\
1&0&0\\
2&0&-1   
\end{bmatrix}
}
=
\scriptsize{
\begin{bmatrix}
31 &  6 &  0\\
36 &  7 &  0\\
30 &  5 &  1
\end{bmatrix}
}
\end{equation*}
The homology polynomial is
$\det(xI-A_2)=-1 + 39 x - 39 x^2 + x^3 =(-1 + x) (1 - 38 x + x^2)$, which means $\dim W(G_2, f_2)=3$. By Corollary~\ref{rem:onepuncture}, the symplectic polynomial has degree $2$, so it is $1 - 38 x + x^2$.

We turn to $F_3$.  From XTrain, we learn that the homology polynomials for $F_2$ and $F_3$ are the same.  However, observe that the curves $u_1,v_1,u_2,v_2,v_3$ are all non-separating on $S$, but $u_3$ is separating.  From this it follows that there cannot be an element $F'\in {\rm Mod}(S)$ that  maps $u_3$ to either $u_i$ or $v_i, \ i=1,2$.  Thus $[F_3]$ is very likely not conjugate to either $[F_1]$ or $[F_2]$ in ${\rm Mod}(S)$.  
\end{example}

\begin{example}[Figure~\ref{fig:k89}]\label{ex:k89} 
This example shows that the symplectic polynomial need not be irreducible over the rationals. The monodromy of the hyperbolic knot $8_9$ \cite{knotsandlinks} is represented by the following train track map:
\begin{figure}[htpb!]  
\begin{center}
\input{k8_9.psfrag}
\includegraphics[width=0.5\textwidth]{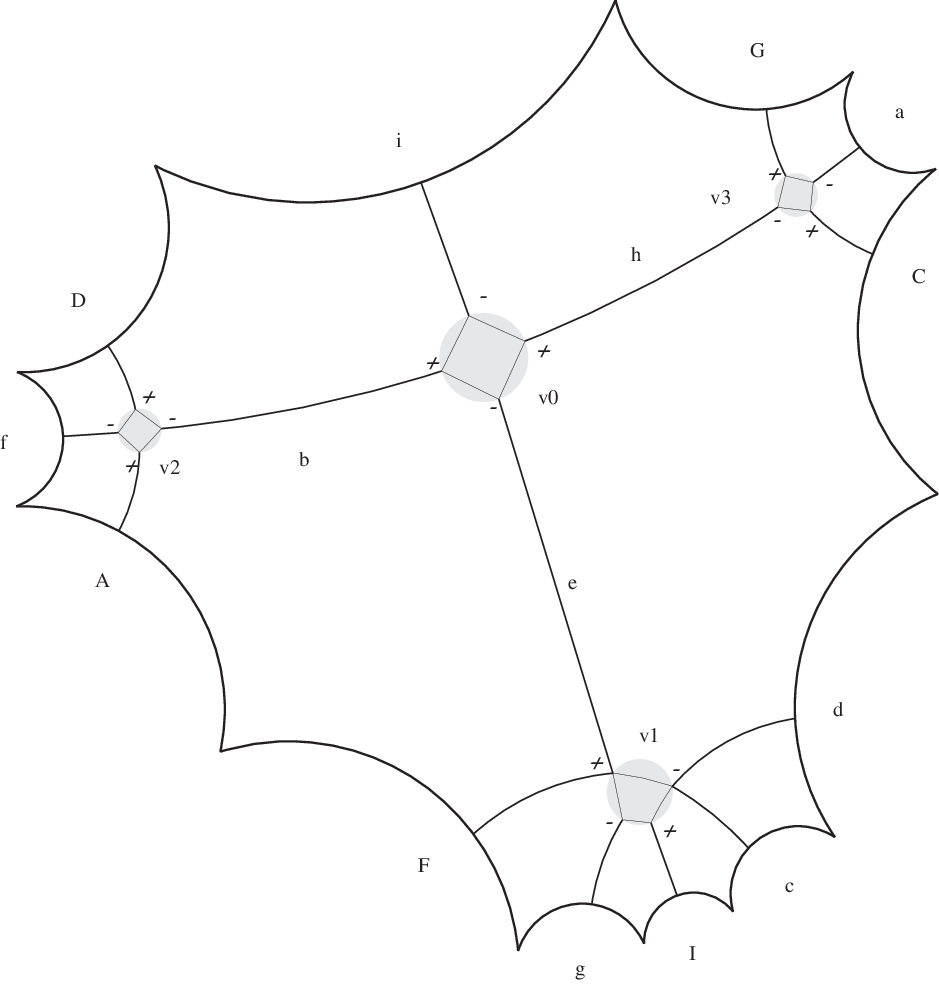}
\end{center}
\caption{Example~\ref{ex:k89}: The knot $8_9$. Orientable train track.}
\label{fig:k89}
\end{figure}
$$
\begin{array}{lll}
a: (v_{3}, v_{2}) \mapsto e &
b: (v_{2}, v_{0}) \mapsto g &
c: (v_{1}, v_{3}) \mapsto b\\
d: (v_{1}, v_{2}) \mapsto bi &
e: (v_{0}, v_{1}) \mapsto hed &
f: (v_{2}, v_{1}) \mapsto d\\
g: (v_{1}, v_{3}) \mapsto fgh&
h: (v_{3}, v_{0}) \mapsto ic &
i: (v_{0}, v_{1}) \mapsto a
\end{array}
$$
We have that $\chi(f_\ast) = x^9 -2x^8 +x^7 -4x^5 +4x^4 -x^2 +2x -1$.
Fix a basis of $\im\delta$, $\{ v_0 - v_1, v_2-v_0, v_3-v_0\}.$ With respect to this basis, $f|_{\rm im \delta}$ is represented as:  
$$\scriptsize
\begin{bmatrix}
0 & -1 & 0 \\
-1 & 0 & 0 \\
1 & -1 & -1 
\end{bmatrix}
$$
whose characteristic polynomial is $x^3+x^2-x-1$.  
Therefore $h(x)$ is: 
$$(x^9 -2x^8 +x^7 -4x^5 +4x^4 -x^2 +2x -1)/(x^3+x^2-x-1) = x^6-3x^5+5x^4-7x^3+5x^2-3x+1.$$
Proposition~\ref{prop:dim Z}-(1)
tells us that $p(x) = 1$, so that $s(x) = x^6-3x^5+5x^4-7x^3+5x^2-3x+1$.   
Since $f:G\to G$ is orientation preserving, by Corollary~\ref{cor on s(x)}-(2), we have $\Delta_{8_9}=x^6-3x^5+5x^4-7x^3+5x^2-3x+1$.
It further factors as $(x^3-2x^2+x-1)(x^3-x^2+2x-1)$.  
It is interesting that these factors are no longer palindromic, and one contains the dilatation $\lambda$ as a root and the other  $1/\lambda$. 
\end{example}

A similar analysis based on the hyperbolic knot $8_{10}$ shows that $s(x)$ need not even be symplectically irreducible.
It has a non-orientable train track and 
$$
s(x)= (x+1)^2
(x^{10}-3x^9+3x^8-4x^7+5x^6-5x^5+5x^4-4x^3+3x^2-3x+1). 
$$

The train tracks in the remaining examples are all non-orientable.

\begin{example}[Figure~\ref{fig:penner}]\label{ex:penner}
This example was suggested to us by Robert Penner.  
Figure~\ref{fig:pennersub1}
\begin{figure}[htpb!]  
\begin{center}
\psfrag{c1}{$c_1$}
\psfrag{c2}{$c_2$}
\psfrag{c3}{$c_3$}
\psfrag{c4}{$c_4$}
\psfrag{c5}{$c_5$}
\subfloat[Positive (inverse) Dehn twists about $c_1, c_2, c_3$, ($c_4, c_5$).]{\includegraphics[width=0.45\textwidth]{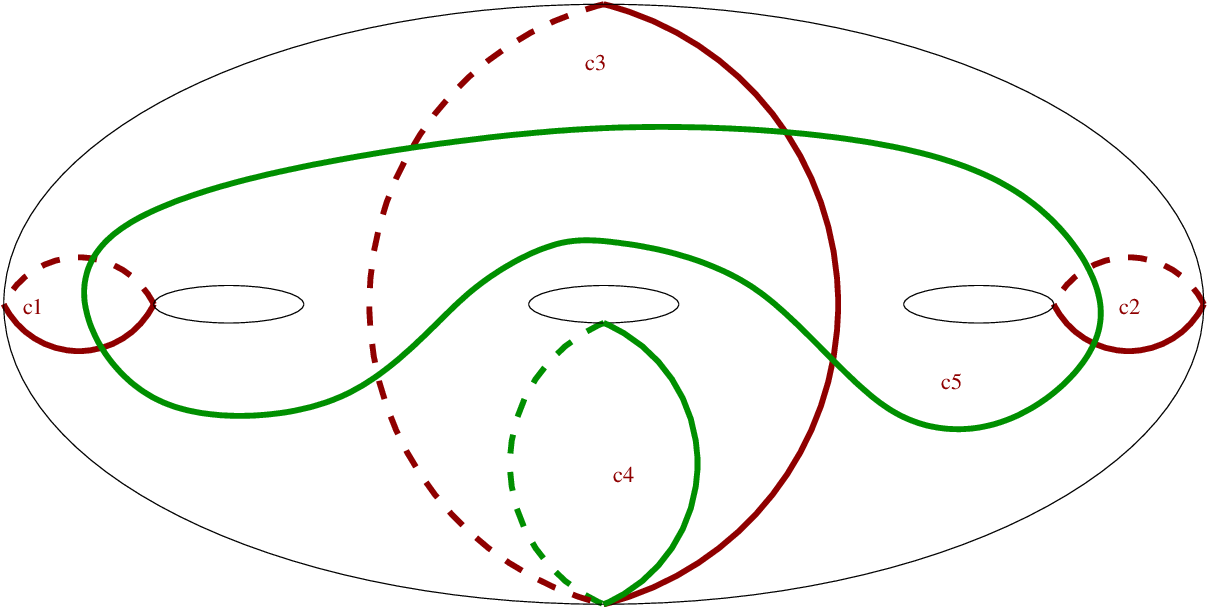}
\label{fig:pennersub1}
}
\input{penner.psfrag}
\subfloat[Train track $\tau$]{\includegraphics[width=0.45\textwidth]{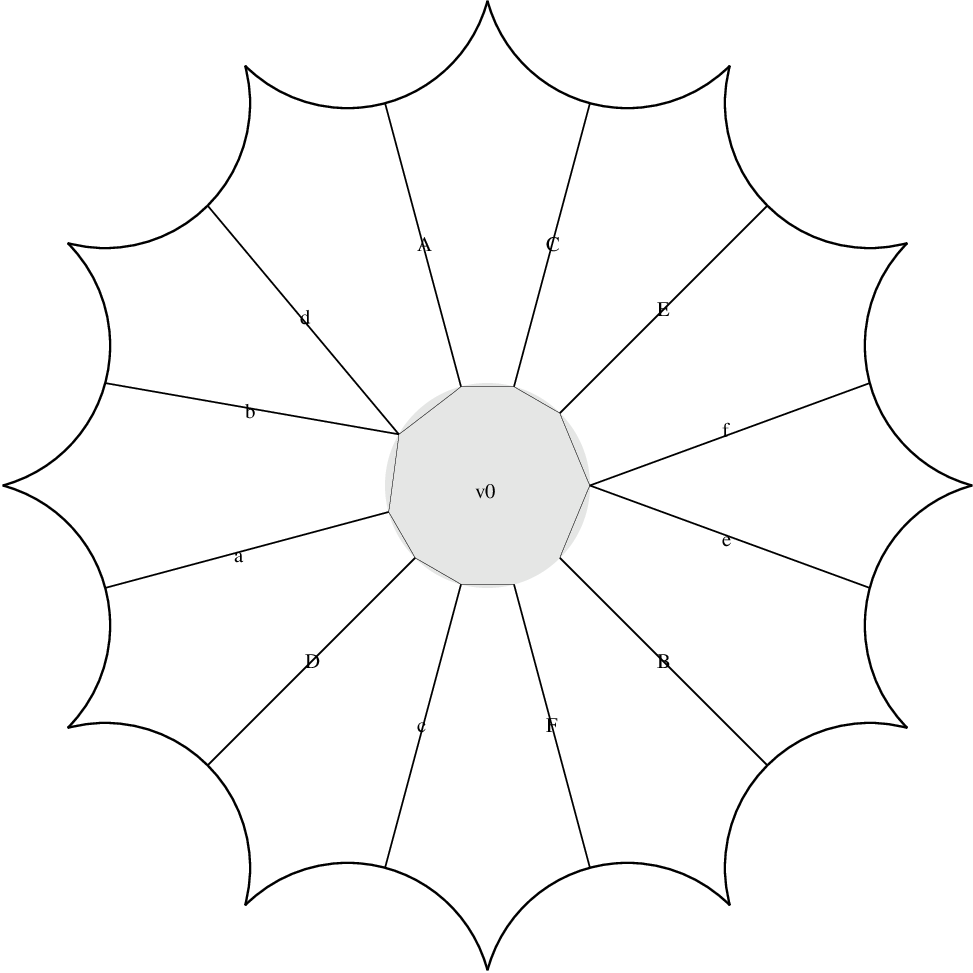}
\label{fig:pennersub2}
}
\end{center}
\caption{Example~\ref{ex:penner}: Penner's pA map, non-orientable train track.}
\label{fig:penner}
\end{figure}
shows five curves $c_1, \ldots, c_5$ on the genus $3$ surface.
The example is the product of positive Dehn twists about $c_1, c_2, c_3$ and inverse twists about $c_4, c_5$.  
The map $f$ acts as follows:
\input{penner.gr}
There is exactly one vertex (which is partial) and the train track is
non-orientable.  The only vertex is fixed by the map and $\chi(f_\ast|_{\im\delta}) = x-1$. 
We have $\dim W(G,f) = 5$, hence the skew-symmetric product is
degenerate and $\dim Z=1$, which means $p(x) = x-1$.
The characteristic polynomial factors as
$\chi(f_*) = (x^4-11x^3+22x^2-11x+1)(x-1)(x-1).$
The symplectic polynomial $s(x)= x^4-11x^3+22x^2-11x+1$ is irreducible, hence  in this example it is necessarily the minimum polynomial of its dilatation.  (Note that this was not the case  in Example~\ref{ex:k89} above.)

If one orients the real and infinitesimal edges of the train track $\tau$ in Figure~\ref{fig:pennersub2} locally so that all orientations are consistent around the single partial vertex $v_0$, one sees that the loops $a,d$ and $f$ do not have globally consistent orientations, whereas the loops $b,c$ and $e$ do.
Since $G$ has no odd vertices, the orientation cover is just an ordinary double cover as illustrated in Figure~\ref{fig:penner-cover}. 
\begin{figure}[htpb!]  
\begin{center}
\input{mixed.psfrag}
\psfrag{a'}{$a'$}
\psfrag{b'}{$b'$}
\psfrag{c'}{$c'$}
\psfrag{d'}{$d'$}
\psfrag{e'}{$e'$}
\psfrag{f'}{$f'$}
\psfrag{1}{$\overline a'$}
\psfrag{2}{$\overline b'$}
\psfrag{3}{$\overline c'$}
\psfrag{4}{$\overline d'$}
\psfrag{5}{$\overline e'$}
\psfrag{6}{$\overline f'$}
\psfrag{0}{$v_0$}
\psfrag{!}{$v_1$}
\includegraphics[width=1\textwidth]{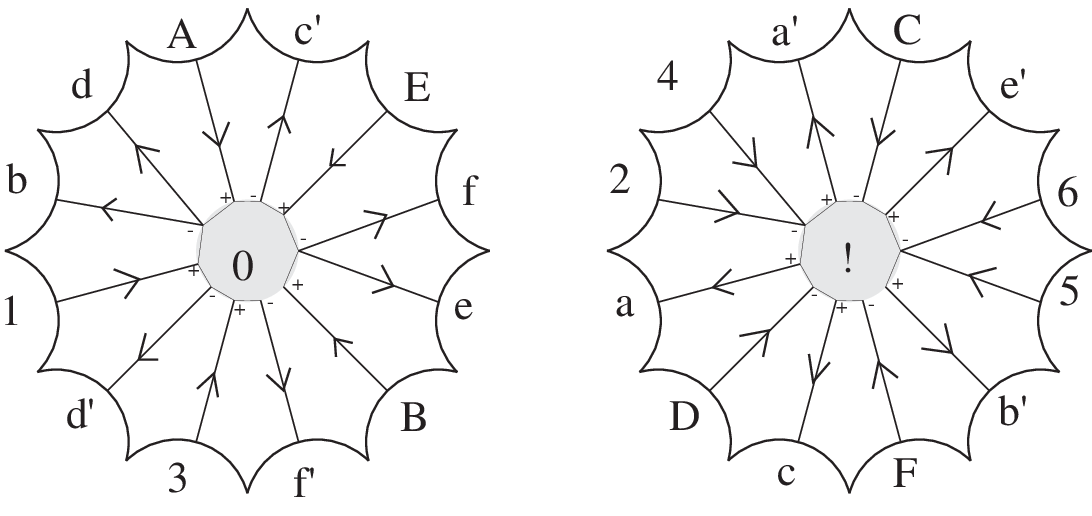}
\end{center}
\caption{ The orientation cover $\tilde \tau$ of $\tau$ in Figure~\ref{fig:penner}.}
\label{fig:penner-cover}
\end{figure}
Each edge, say $a \subset G$, lifts to two copies $a, a' \subset \tilde G$ and the vertex $v_0 \in G$ lifts to $v_0, v_1 \in \tilde G$. 
We choose an orientation for $\tilde \tau$. 
As claimed in Proposition~\ref{prop: orientation cover is orientable}, twin edges have opposite orientations. 
Proposition~\ref{or-cover-matrix} implies that there are two covering maps; orientation preserving and orientation reversing. 
The following is the orientation preserving train track map $\tilde f_{\rm op} : \tilde G \to \tilde G$:
$$
\begin{array}{lcllcl}
a: (v_1, v_0) &\mapsto & ada 
&
a': (v_1, v_0) & \mapsto &  a' d' a' 
\\
b: (v_0, v_0) & \mapsto &  dac'd'a'b
&
b': (v_1, v_1) & \mapsto &  b' a d c  a'   d' 
\\
c: (v_1, v_1) & \mapsto & c e' b' a d c  
&
c' : (v_0, v_0) & \mapsto &  c'd' a' bec' 
\\
d: (v_0, v_1) & \mapsto & dac'  d' a' b   efc e'  b'ad 
&
d': (v_0, v_1) & \mapsto & d' a' bec' f' e'   b'adca'd'
\\
e: (v_0, v_0) & \mapsto &  efce'b'ad  a  dac'd' a' b e 
&
e': (v_1, v_1) & \mapsto & e'  b'adca'd' a' d' a' bec' f' e' 
\\
f: (v_0, v_1) & \mapsto &  efce'b'ad  a  dac'd' a' b e f  
&
f': (v_0, v_1) & \mapsto &   f' e'  b'adca'd' a' d' a' bec' f' e'
\end{array}
$$
We order the edges $a, b, \cdots, f, a', b', \cdots, f'$. Then the transition matrix of $\tilde f_{\rm op}$ has the form 
$\scriptsize
\begin{bmatrix}
A & B\\
B & A
\end{bmatrix}$
where
\begin{equation*}
A=
{\scriptsize
\begin{bmatrix}
2 &  1 &  1 &  2 &  3 &  3 \\ 
0 &  1 &  0 &  1 &  1 &  1 \\
0 &  0 &  2 &  1 &  1 &  1 \\
1 &  1 &  1 &  2 &  2 &  2 \\
0 &  0 &  0 &  1 &  2 &  2 \\
0 &  0 &  0 &  1 &  1 &  2
\end{bmatrix}}
\qquad \mbox{and} \qquad
B=
{\scriptsize
\begin{bmatrix}
0 &  1 &  0 &  1 &  1 &  1\\
0 &  0 &  1 &  1 &  1 &  1\\
0 &  1 &  0 &  1 &  1 &  1\\
0 &  1 &  0 &  1 &  1 &  1\\
0 &  0 &  1 &  1 &  1 &  1\\
0 &  0 &  0 &  0 &  0 &  0
\end{bmatrix}}.
\end{equation*}
Note that $A+B$ is the transition matrix for $f : G\to G$. 
Its characteristic polynomial is
$$(-1 + x)^4 (1 - 4 x + x^2) (1 - 3 x + x^2) (1 - 11 x + 22 x^2 - 
   11 x^3 + x^4),$$
and the symplectic polynomial is 
$$
\chi(\tilde f_{\rm op} {}_\ast |_{W(\tilde G,\tilde f)/\tilde Z})=
(-1 + x)^2 (1 - 4 x + x^2) (1 - 3 x + x^2) (1 - 11 x + 22 x^2 -11 x^3 + x^4).
$$   
The following is the orientation reversing train track map $\tilde f_{\rm or} : \tilde G \to \tilde G$.
$$
\begin{array}{lcllcl}
a: (v_1, v_0) &\mapsto & \overline a'  \overline d'\overline a' 
&
a': (v_1, v_0) & \mapsto &  \overline a \overline d \overline a 
\\
b: (v_0, v_0) & \mapsto &  \overline d' \overline a' \overline c  \overline d \overline a  \overline b' 
&
b': (v_1, v_1) & \mapsto &  \overline b \overline a' \overline d' \overline c'  \overline a \overline d 
\\
c: (v_1, v_1) & \mapsto &  \overline c' \ \overline e \overline b \overline a' \overline d' \overline c' 
&
c': (v_0, v_0) & \mapsto &  \overline c  \overline d \overline a  \overline b' \overline e' \overline c  
\\
d: (v_0, v_1) & \mapsto &  \overline d' \overline a' \overline c  \overline d \overline a  \overline b' \overline e' \overline f'  \overline c' \overline e \overline b  \overline a' \overline d' 
&
d': (v_0, v_1) & \mapsto &  \overline d \overline a  \overline b'  \overline e'  \overline c  \overline f \overline e \overline b  \overline a'  \overline d' \overline c' \overline a \overline d 
\\
e: (v_0, v_0) & \mapsto &  \overline e' \overline f' \overline c'  \overline e \overline b \overline a' \overline d' \overline a' \overline d' \overline a' \overline c \overline d \overline a \overline b' \overline e' 
&
e': (v_1, v_1) & \mapsto &  \overline e \overline b \overline a'  \overline d'  \overline c' \overline a \overline d \overline a \overline d \overline a \overline b'  \overline e'  \overline c  \overline f \overline e 
\\
f: (v_0, v_1) & \mapsto &  \overline e' \overline f' \overline c'  \overline e \overline b \overline a' \overline d' \overline a' \overline d' \overline a' \overline c  \overline d \overline a \overline b' \overline e' \overline f' 
&
f': (v_0, v_1) & \mapsto &   \overline f \overline e \overline b \overline a'  \overline d'  \overline c' \overline a \overline d \overline a \overline d \overline a \overline b'  \overline e'  \overline c   \overline f \overline e 
\end{array}
$$
We observe that the transition matrix for $\tilde f_{\rm or}$ is  
$
\scriptsize
\begin{bmatrix}
B & A\\
A & B
\end{bmatrix}.
$
Its characteristic polynomial is
$$(-1 + x)^2 (1 + x)^2 (1 + 3 x + x^2) (1 + 4 x + x^2) (1 - 11 x + 
   22 x^2 - 11 x^3 + x^4), $$
and the symplectic polynomial is 
$$
\chi(\tilde f_{\rm or} {}_\ast|_{W(\tilde G,\tilde f)/\tilde Z})=
(1 + x)^2 (1 + 3 x + x^2) (1 + 4 x + x^2) (1 - 11 x + 22 x^2 - 11 x^3 + x^4).
$$
The dilatation cannot distinguish the pA maps $F:S\to S$, $\tilde F_{\rm op}: \tilde S\to \tilde S$ and $\tilde F_{\rm or}: \tilde S\to \tilde S$, but our symplectic polynomial can distinguish the three.

It seems to be an open question to describe all the ways to construct all pA maps having a fixed dilatation.  
\end{example}

\begin{example}[Figure~\ref{fig:evenodd}]\label{ex:evenodd}
The following map shows that even, odd, and partial vertices can
coexist in the same (non-orientable) train track:
\begin{figure}[htpb!]  
\begin{center}
\input{mixed.psfrag}
\includegraphics[width=0.5\textwidth]{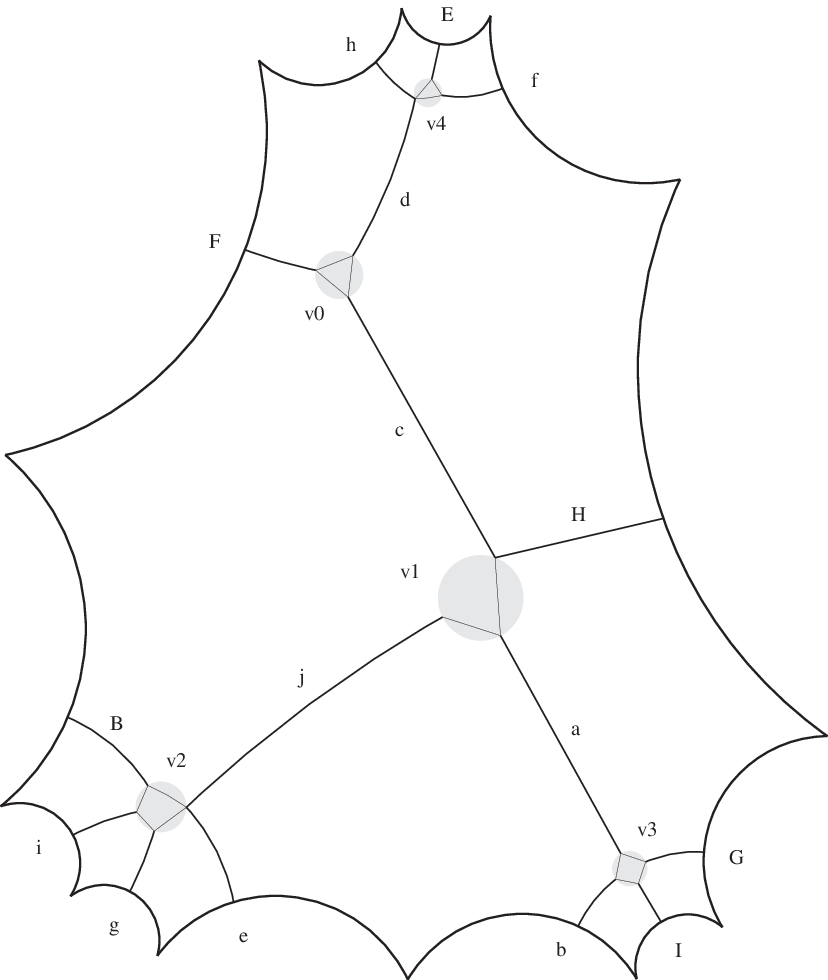}
\end{center}
\caption{Example~\ref{ex:evenodd}: A train track with even, odd, and partial vertices.}
\label{fig:evenodd}
\end{figure}
\input{mixed.gr}
The characteristic polynomial factors as
$$
s(x) p(x) \chi(f_\ast|_{\im\delta})=
(x^6 - 3 x^5 + x^4 - 5 x^3 + x^2 - 3 x + 1) (x-1) (x^3-x^2-x+1).
$$
The factor $\chi(f_\ast|_{\im \delta})=(x^3-x^2-x+1)$ is the characteristic
polynomial of the matrix
$$
\begin{bmatrix}
1&0&0\\
0&0&1\\
0&1&0
\end{bmatrix},
$$
which describes how non-odd vertices $v_1, v_2, v_3$ are permuted by $[F]$. 
\end{example}

\section*{Acknowledgment}
The authors would like to thank Jeffrey Carlson  for his help with the proof of Corollary~\ref{cor:homology polynomial is invariant}, which was stated as a conjecture in an earlier draft. They thank Dan Margalit for suggesting a way to find examples of non-conjugate maps with the same dilatation.  They would also like to thank Mladen Bestvina, Nathan Dunfield, Jordan Ellenberg, Ji-young Ham,  Eriko Hironaka, Eiko Kin, Chris Leininger, Robert Penner, and Jean-Luc Thiffeault for their generosity in sharing their expertise and their patience in responding to questions.   

The third author was partially supported by NSF grants DMS-0806492 and DMS-0635607.


\end{document}